\numberwithin{equation}{section}
\newtheorem{theorem}{Theorem}[section]
\newtheorem{proposition}{Proposition}[section]
\newtheorem{lemma}{Lemma}[section]
\newtheorem{corollary}{Corollary}[section]
\theoremstyle{Definition}
\newtheorem{definition}{Definition}[section]
\newtheorem{example}{Example}[section]
\def \D {\mathcal{D}}
\def\R{\mathbb{R}}
\def\C{\mathcal{C}}
\def \H {\mathcal{H}}
\def \p{\partial}
\def \e {\varepsilon}
\def \a {\alpha}
\def \RE {\text{Re}}
\def \IM {\text{Im}}
\def \O {\Omega}
\def \M {\mathcal{M}}
\def \ph {\varphi}
\DeclareMathOperator \inte{int}
\DeclareMathOperator{\dist}{dist}
\DeclareMathOperator{\dive}{div}
\DeclareMathOperator{\supp}{supp}
\DeclareMathOperator{\tr}{tr}
\DeclareMathOperator{\curl}{curl}
\author{Rémy Rodiac}
\title[Description of vorticities of the (G.L) equations]{Description of limiting vorticities for the magnetic 2D Ginzburg-Landau Equations}
\date{}
\address{
Universit\'e catholique de Louvain, Institut de Recherche en Math\'ematique et Physique, Chemin du Cyclotron 2 bte L7.01.01, 1348 Louvain-la-Neuve, Belgium}
\email{remy.rodiac@uclouvain.be}
\begin{document}

\begin{abstract}
Let $\O$ be a bounded open set in $\R^2$. The aim of this article is to describe the functions $h$ 
in $H^1(\O)$ and the Radon measures $\mu$ which satisfy $-\Delta h+h=\mu$ and $
\dive(T_h)=0$ in $\O$, where $T_h$ is a $2\times 2$ matrix given by $(T_h)_{ij}=2\p_ih\p_jh-
(|\nabla h|^2+h^2)\delta_{ij}$ for $i,j=1,2$. These equations arise as equilibrium 
conditions satisfied by limiting vorticities and limiting induced magnetic fields of 
solutions of the magnetic Ginzburg-Landau equations. This was shown by Sandier-Serfaty in 
\cite{SandierSerfaty2003,SandierSerfaty2007}. Let us recall that they obtained that $|\nabla h|$ is continuous in $\O$. We prove that if $x_0$ in $
\O$ is in the support of $\mu$ and is such that $|\nabla h(x_0)|\neq 0$ then $\mu$ is absolutely continuous with respect to 
the 1D-Hausdorff measure restricted to a $\C^1$-curve near $x_0$ whereas $\mu_{\lfloor \{|
\nabla h|=0\}}=h_{|\{ |\nabla h|=0\}} \mathcal{L}^2$. We also prove that if $\O$ is smooth bounded and star-shaped and if $h=0$ on $\p \O$ then $h 
\equiv 0$ in $\O$. This rules out the possibility of having critical points of the Ginzburg-Landau energy with a number 
of vortices much larger than the applied magnetic field $h_{ex}$ in that case.

\smallskip
\noindent \textbf{Keywords.} Ginzburg-Landau theory, vorticity, inner-variational problem.
\end{abstract}

\maketitle

\section{Introduction}
\subsection{Statement of the problem and main results}
Let $\O \subset \R^2$ be a bounded open set. In this article we are interested in describing the functions $h$ in $H^1(\O)$ and the Radon measures $\mu$ in $\mathcal{M}(\O)$ which satisfy
\begin{equation}\label{1}
-\Delta h+h=\mu \ \ \text{ in } \O,
\end{equation}
\begin{equation}\label{2}
\sum_{i=1}^{2} \p_i\left[ 2\p_ih\p_jh-\left(|\nabla h|^2+h^2\right)\delta_{ij}\right]=0 \ \ \text{ in } \O \text{ for } \ \ j=1,2.
\end{equation}
We are particularly interested in the description of the support of $\mu$, i.e., the complement of the largest open set on which $\mu$ vanishes. In the majority of the paper we only look at local regularity questions but when needed we prescribe the boundary conditions
\begin{equation}\label{BC}
h\equiv 0 \ \ \text{ on } \p \O \ \ \ \text{ or } \ \ \ \  h\equiv 1 \ \ \text{ on } \p \O.
\end{equation}
These are the natural boundary conditions for this problem when related to the Ginzburg-Landau (G.L) theory. Equation \eqref{2} can be rewritten as 
\begin{equation}
\dive (T_h)=0 \text{ in } \O ,
\end{equation}
where $T_h$ is  the so-called \textit{stress-energy tensor} associated to the functional
\begin{equation}\label{F}
F(h)=\int_\O \left( |\nabla h|^2+h^2 \right)
\end{equation}
and is defined by 
\begin{eqnarray}\label{eq:defTh}
T_h &= & \begin{pmatrix}
(\p_xh)^2-(\p_yh)^2 -h^2 & 2(\p_xh) (\p_yh) \\
2(\p_xh) (\p_yh) & (\p_yh)^2-(\p_xh)^2-h^2
\end{pmatrix}.
\end{eqnarray}
This can also be written as $T_h=2\nabla h \otimes\nabla h -(|\nabla h|^2+h^2)I_2.$ We remark that, since $h$ is in $H^1(\O)$, then $T_h$ is in $L^1(\O)$. Thus Equation \eqref{2} is well-defined in the sense of distributions as $\int_\O (T_h)_i^T \cdot \nabla \varphi=0$, for all $\varphi$ in $\C^\infty_c(\O)$, for $i=1,2$, and where $(T_h)_i$ is the i-th line of the matrix $T_h$. Equation \eqref{2} means that $h$ is a \textit{stationary point} of $F$. More precisely it is a critical point of $F$ with respect to inner variations or variations of the domain, i.e.,
\begin{equation}
\frac{d}{dt}|_{t=0}F(h(x+tX(x))=0
\end{equation}
for all $X$ in $\C^\infty_c(\O,\R^2)$. We note that for $t$ small enough $x \mapsto x+tX(x)$ is a diffeomorphism of $\O$. These inner variations lead to different equations from the classical Euler-Lagrange equations given by  outer variations: $h_t(x)=h(x)+t\varphi(x)$ for $\varphi$ in $\C^\infty_c(\O)$. However it is known that if $h$ is a a critical point of some functional $G$ and $h$ is smooth enough-- $\C^2$ is sufficient-- then it is also critical for the inner variations. Indeed, we can then write an inner variation as $h(x+tX(x))= h(x)+t\nabla h(x)\cdot X(x)+O(t^2)$, with $O(t^2)\leq Mt^2$ uniformly in $\O$, see e.g.\ p.30-31 in \cite{Rivierecours}. Thus our first example of $h$ and $\mu$ satisfying \eqref{1}-\eqref{2} is
\begin{example}
Let $h$ be in $H^1(\O)$ such that $-\Delta h+h=0$ then $h$ satisfies \eqref{1} and \eqref{2} with $\mu=0$. Indeed, by ellipic regularity theory, $h$ is smooth and hence satisfies \eqref{2}.
\end{example}
Conversely we can show that if $h$ is $\C^2$ and satsifies \eqref{2} then $-\Delta h+h=0$ or $h$ is constant in $\O$. The argument is the following: by applying the chain rule in \eqref{2} we find that $(-\Delta h +h)\nabla h=0$ in $\O$. Thus $-\Delta h+h=0$ in $\{|\nabla h|\neq 0 \}$. But we also have that $\Delta h=0$ almost everywhere on $\{ |\nabla h|=0\}$, cf.\ e.g.\ Lemma 7.7 in \cite{GilbargTrudinger}. Hence $-\Delta h+h=h\textbf{1}_{\{ |\nabla h|=0 \}}$. By using the Morse-Sard Theorem, $h$ is constant on the connected components of $\{|\nabla h|=0 \}$. But then $h\textbf{1}_{\{|\nabla h|=0 \}}$ is continuous in $\O$ if and only if $h=0$ on $\{|\nabla h|=0 \}$ or $\{|\nabla h|=0 \}$ is either empty or equal to $\O$. This implies that $-\Delta h +h=0$ in $\O$ or $h=cst$ in $\O$. For a more general statement about the equivalence between the inner-variational equations and Euler-Lagrange equations for smooth solutions we refer to \cite{Faliagas2016}. For an example of a variational problem where there exists a (non-smooth) critical point for the outer variations which is not critical for the inner variations we refer to \cite{Riviere1995}. The regularity theory for solutions of inner variational equations is rather limited until now and ``it is still in its infancy" to use the same words as in \cite{IwaniecKovalevOnninen2013} to which we refer for more on this subject.

Before stating our main results, let us give non-trivial examples of $h$ and $\mu$ satisfying \eqref{1}-\eqref{2}.
\begin{example}\label{ex:example1}
Let $\O:= (-1,1)\times (-1,1)$ and $h(x,y)=f(x)$ in $\O$ where
\begin{equation}
f(x):=\begin{cases} e^x & \text{ if }   -1<x\leq 0 \\
e^{-x} & \text{ if  }  \phantom{-a}0\leq x<1.
\end{cases}
\end{equation}
We have that $h$ is in $H^1(\O)$, and since $|f'|=|f|$ we can check that $h$ satsifies \eqref{2}. Furthermore we have that $-\Delta h+h=\mu$ where $\mu= -2\mathcal{H}^1_{\lfloor \{ x=0\}}=-2\frac{\p h}{\p x^+}(0,y)\H^1_{\lfloor \{x=0 \}}$. 
\end{example}
In this example we see that $h$ is only Lipschitz and $\mu$ is concentrated on a curve. We used the notation $\p h /\p x^+(x_0,y_0)=\lim_{x\rightarrow x_0^+}\frac{h(x,y_0)-h(x_0,y_0)}{x-x_0}$, and $\p h/\p x^-$ is defined in an analogous way. We also remark that, $\p h/\p x^+(0,y)=-\p h/\p x^-(0,y)$ for all $y$ in $(-1,1)$ and that $|\nabla h|=|f'|$ is continuous in $\O$. The second example is more regular.
\begin{example}\label{ex:example2}
Let $\O$ be any smooth open bounded domain in $\R^2$, let $\lambda>0$ and let $h$ in $H^1(\O)$ be the solution of the following \textit{obstacle problem}:
\begin{equation}
\min \{ \frac{1}{2} \int_{\O} \left( |\nabla h|^2 +h^2 \right ); h=1 \text{ on } \p \O \text{ and } h\geq 1-\frac{1}{2\lambda} \} .
\end{equation}
Then, since inner variations preserve the space of minimization, we can verify that $h$ satsifies \eqref{2}. Besides it is known, cf.\ \cite{Frehse1972,Caffarelli1998}, that $h$ is $\C^{1,1}(\O)$ and $-\Delta h+h=\left( 1-\frac{1}{2\lambda} \right)\textbf{1}_{\omega_\lambda}$ where $\omega_\lambda$ is the coincidence set $\omega_\lambda:=\{x\in \O; h(x)=1-\frac{1}{2\lambda}\}$. For $\lambda$ large enough this coincidence set is not empty, see e.g.\ Proposition 7.2 in \cite{SandierSerfaty2007}, and thus the measure $-\Delta h+h$ is non trivial.
\end{example}
In the above example it can be checked that we also have $\mu=h\textbf{1}_{\{|\nabla h|=0 \}}$. Our main result says that every solution of \eqref{1}-\eqref{2} is a combination of the two previous examples. First we recall that if $h$ is in $H^1(\O)$ and satisfies \eqref{2} then $|\nabla h|$ is continous, cf.\ Theorem 13.2 in \cite{SandierSerfaty2007} or Lemma 4.1 \cite{SandierSerfaty2003}. Thus it makes sense to talk about the set $\{ |\nabla h|>0\}$ and its complementary.

\begin{theorem}\label{th:main1}
Let $h$ be in $H^1(\O)$ and $\mu$ be in $\M(\O)$ such that \eqref{1}, \eqref{2} hold. Then $\supp \mu \cap \{|\nabla h|>0 \}$ is locally $\mathcal{H}^1$-rectifiable, $\nabla h$ is in $BV_{\text{loc}}(\O)$ and 
\begin{itemize}
\item[1)] there exists  $\sigma: \supp \mu \cap \{|\nabla h|>0 \}\rightarrow \{\pm 1 \}$ such that \begin{equation}\label{eq:decomposition*}
\mu=h\textbf{1}_{\{|\nabla h|=0\}}+2\sigma (x)|\nabla h |\mathcal{H}^1_{\supp \mu \cap \{|\nabla h|>0 \}},
\end{equation}
with $\epsilon$ which is constant on every connected component of $\supp \mu \cap \{|\nabla h|>0 \}$.
\item[2)] The function $h$ is constant on the connected components of $\supp \mu$.
\end{itemize}
\end{theorem}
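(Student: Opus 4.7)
The plan is to split the analysis according to whether we are on the open set $U = \{|\nabla h|>0\}$ (on which $|\nabla h|$ is continuous and locally bounded below, by the theorem of Sandier-Serfaty quoted above) or on its complement $Z = \{|\nabla h|=0\}$. On $Z$ the classical fact that $\Delta h = 0$ almost everywhere on $\{\nabla h = 0\}$ (Lemma 7.7 in \cite{GilbargTrudinger}) immediately gives $\mu \lfloor Z = h\textbf{1}_Z \mathcal{L}^2$, so all the substantive content of the theorem concerns $U$.

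On $U$, I would rewrite \eqref{1}-\eqref{2} in complex form. Setting $f = \p_x h - i\p_y h$, equation \eqref{1} reads $\bar\p f = \tfrac{1}{2}(h-\mu)$. A short Hopf-type computation (using $T_{11}-T_{22}-2iT_{12}=2f^2$, $T_{11}+T_{22}=-2h^2$, and combining the two scalar components of \eqref{2}) shows that \eqref{2} is equivalent to
\begin{equation*}
\bar\p(f^2) \,=\, \p(h^2) \,=\, hf.
\end{equation*}
On $U$ the right-hand side lies in $L^\infty_{\mathrm{loc}}$: $|f|=|\nabla h|$ is continuous there, so $\nabla h \in L^\infty_{\mathrm{loc}}(U)$ and $h$ is locally Lipschitz. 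The $L^p$-boundedness of the Beurling transform applied to this equation then gives $f^2 \in W^{1,p}_{\mathrm{loc}}(U)$ for every $p<\infty$; in particular $f^2$ has a (H\"older) continuous representative on $U$.

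Since $|f^2| = |\nabla h|^2$ is bounded below on compact subsets of $U$, one can locally fix a continuous single-valued square root $f_*$ of $f^2$ and write $f = \eta f_*$ almost everywhere, with $\eta: U\to \{\pm 1\}$ measurable. Applying $\bar\p$ to $f_*^2 = f^2$ and using the Hopf identity $\bar\p(f^2) = hf = h\eta f_*$ yields $\bar\p f_* = h\eta/2$; the distributional product rule (valid since $f_*$ is a continuous, non-vanishing element of $W^{1,p}$ and $\eta\in L^\infty$) then gives
\begin{equation*}
\bar\p \eta \,=\, -\frac{\mu}{2f_*}.
\end{equation*}
Since the right-hand side is a complex Radon measure on $U$, it follows that $\eta\in BV_{\mathrm{loc}}(U)$ and consequently $\nabla h = \eta (\RE f_*, -\IM f_*) \in BV_{\mathrm{loc}}(U)$. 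By the structure theorem for BV functions, the jump set $J_\eta$ is $\H^1$-rectifiable and coincides with $\supp\mu\cap U$ up to $\H^1$-null sets.

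To identify the density and derive the constancy of $h$, I would apply the Rankine-Hugoniot jump conditions. On an oriented piece $\Gamma$ of the jump set with unit normal $\nu$ and tangent $\tau$, the identity $[T_h\nu]=0$, combined with the continuity of $h$ and of $|\nabla h|^2$, reduces to $[(\p_\nu h)^2]=0$ and $[\p_\nu h\,\p_\tau h]=0$. The only non-trivial possibility is $\p_\nu h^+=-\p_\nu h^-$, which then forces $\p_\tau h\equiv 0$ on $\Gamma$; hence $|\p_\nu h|=|\nabla h|$ and $[\p_\nu h]=\pm 2|\nabla h|$ on $\Gamma$, yielding the decomposition \eqref{eq:decomposition*} with sign locally constant along $\Gamma$ by continuity of the jump. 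The vanishing $\p_\tau h\equiv 0$ on $\Gamma$ gives that $h$ is constant on each connected component of $\Gamma$; together with the Morse-Sard constancy of $h$ on connected components of $Z$ and the continuity of $h$, this yields part 2. The main obstacle is establishing the identity $\bar\p\eta = -\mu/(2f_*)$ rigorously: it requires first extracting from the Hopf equation $\bar\p(f^2)=hf$ enough regularity for $f^2$ to admit a single-valued continuous non-vanishing square root in $W^{1,p}$, and then justifying the distributional product rule in a setting where $\eta$ is only in $L^\infty$ while $f_*$ is only in $W^{1,p}$.
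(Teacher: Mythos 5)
Your analysis on the set $U=\{|\nabla h|>0\}$ is essentially the paper's own argument (complex form of \eqref{2}, a continuous non-vanishing square root $f_*$ of $f^2$ in $W^{1,p}_{\text{loc}}$, the sign function $\eta\in\{\pm1\}$ with $\p_{\overline{z}}\eta$ proportional to $\mu/f_*$, hence $\eta\in BV$ and rectifiability of its jump set); the only real divergence is at the end, where you invoke Rankine--Hugoniot conditions for $T_h$ across the jump set instead of the paper's construction of a $\C^{1,\alpha}$ potential $H$ with $\nabla h=\theta\nabla H$ whose zero level set carries $\mu$. That variant is workable (continuity of $T_h$ and of $|\nabla h|$ do force $\nabla h^+=-\nabla h^-$ and $\p_\tau h=0$ on the jump set), but it presupposes the existence of one-sided traces of $\nabla h$ on the jump set, which again comes from the BV regularity you have only sketched, and it yields rectifiability rather than the $\C^1$ structure the paper actually proves and uses.

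The genuine gap is in the two points you treat as routine. First, the claim that Lemma 7.7 of \cite{GilbargTrudinger} ``immediately gives'' $\mu_{\lfloor Z}=h\textbf{1}_Z\mathcal{L}^2$ on $Z=\{|\nabla h|=0\}$ is false as stated: that lemma controls only the absolutely continuous part of $\Delta h$ on the level set, whereas here $\Delta h$ is merely a Radon measure and could a priori carry a singular part (a jump or Cantor part of $D(\nabla h)$) concentrated on a Lebesgue-null subset of $Z$. Excluding this is a substantive part of the theorem, not a triviality; the paper does it either by adapting Sandier--Serfaty's coarea/generalized Gauss--Green argument to the sets $\{|\nabla h|<s_n\}$, or by the fine structure of $D(\nabla h)$ for $BV$ fields (Proposition 3.92 of \cite{AmbrosioFuscoPallara2000}) combined with the continuity of $|\nabla h|$ to kill the jump part --- and both routes require knowing beforehand that $\nabla h\in BV_{\text{loc}}(\O)$, not just $BV_{\text{loc}}(U)$. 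Second, and relatedly, your argument only produces $\nabla h\in BV_{\text{loc}}(U)$; passing to $BV_{\text{loc}}(\O)$ is not automatic (a function locally $BV$ on an open set and vanishing on its complement need not be $BV$ across the boundary), and the paper needs the quantitative estimate $\|\p_{ij}h\|_{\M(B_R(z_0))}\leq|\mu|(B_R(z_0))+C\|h\|_{L^\infty}|B_R(z_0)|$ with $C$ independent of $z_0$ and $R$, together with a Besicovitch covering of $U$, to sum these local bounds. Without these two steps the decomposition \eqref{eq:decomposition*} on $\{|\nabla h|=0\}$ and the assertion $\nabla h\in BV_{\text{loc}}(\O)$ remain unproved.
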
 
Besides a description of the measure $\mu$ we also provide a non-existence result when the boundary data is $h= 0$ on $\p \O$.

\begin{theorem}\label{th:main2}
Let $\O \subset \R^2$ be a smooth, bounded and star-shaped domain. Let $h$ be in $H^1(\O)$ which satisfies \eqref{2} and such that $h= 0$ on $\p\O$ then $h=0$ in $\O$.
\end{theorem}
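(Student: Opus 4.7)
The plan is to establish a Pohozaev-type identity from the inner-variational equation \eqref{2} by testing against the radial vector field $X(x)=x-x_0$, where $x_0$ is a center with respect to which $\O$ is star-shaped, so that $(x-x_0)\cdot\nu\geq 0$ on $\p\O$ with $\nu$ the outward unit normal. Since $X$ is not compactly supported in $\O$, I would first apply the weak formulation of $\dive T_h=0$ to the truncations $X_n(x)=(x-x_0)\eta_n(x)$, with $\eta_n\in\C^\infty_c(\O)$ increasing to $\mathbf{1}_\O$ and $\nabla\eta_n$ concentrating in a shrinking tubular neighborhood of $\p\O$, and then pass to the limit.

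A direct calculation from \eqref{eq:defTh} gives $\tr T_h=-2h^2$, so expanding $\int_\O T_h:\nabla X_n=0$ yields
\begin{equation*}
-2\int_\O h^2\,\eta_n + \int_\O (T_h(x-x_0))\cdot\nabla\eta_n = 0.
\end{equation*}
The first term tends to $-2\int_\O h^2$ by dominated convergence. Choosing $\eta_n(x)=\zeta(n\,\dist(x,\p\O))$ with $\zeta\colon[0,\infty)\to[0,1]$ smooth, $\zeta(0)=0$ and $\zeta\equiv 1$ on $[1,\infty)$, the coarea formula concentrates the second integral near $\p\O$; formally its limit is $-\int_{\p\O}(T_h\nu)\cdot(x-x_0)\,d\H^1$. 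To evaluate the boundary term I would use $h=0$ on $\p\O$: the tangential derivative vanishes, so $\nabla h=(\p_\nu h)\nu$ on $\p\O$, and substituting into \eqref{eq:defTh} yields $T_h\nu=(\p_\nu h)^2\nu$. The Pohozaev identity would read
\begin{equation*}
2\int_\O h^2 + \int_{\p\O}(\p_\nu h)^2\,(x-x_0)\cdot\nu\,d\H^1 = 0.
\end{equation*}
Since $\O$ is star-shaped about $x_0$, both terms are non-negative and must separately vanish; this forces $h\equiv 0$ in $\O$.

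The main obstacle is justifying the trace of $T_h$ in the boundary limit when $h$ is only in $H^1(\O)$. One route is to use regularity: if one can show that $\supp\mu$ stays away from $\p\O$ (for instance by exploiting the boundary condition together with Theorem \ref{th:main1}), then $h$ is smooth in a one-sided neighborhood of $\p\O$ by elliptic regularity and $\p_\nu h$ is classically defined. Alternatively, one may regard $V=T_h(x-x_0)\in L^1(\O)$ as a divergence-measure field, since the weak form of \eqref{2} gives $\dive V=\tr T_h=-2h^2\in L^1(\O)$, and invoke Anzellotti/Chen--Frid trace theory to assign a distributional normal trace $V\cdot\nu$ on $\p\O$; identifying this trace with $(\p_\nu h)^2(x-x_0)\cdot\nu$, via the Lipschitz structure of $\{|\nabla h|>0\}$ and the continuity of $|\nabla h|$ up to the smooth boundary, is the delicate step.
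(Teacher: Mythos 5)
Your formal derivation is correct and is essentially the paper's argument: testing $\dive T_h=0$ against the radial field gives the Pohozaev identity $2\int_\O h^2+\int_{\p\O}(\p_\nu h)^2\,(x-x_0)\cdot\nu\,d\H^1=0$ (the computations $\tr T_h=-2h^2$ and $T_h\nu=(\p_\nu h)^2\nu$ on $\{h=0\}$ are both right), and star-shapedness then forces $h\equiv 0$. The gap is exactly the point you flag at the end and do not resolve: with $h$ only in $H^1(\O)$ one has $T_h\in L^1(\O)$ a priori, so neither the existence of the limit of $\int_\O (T_h(x-x_0))\cdot\nabla\eta_n$ nor its identification with $-\int_{\p\O}(\p_\nu h)^2(x-x_0)\cdot\nu$ is justified, and neither of your two suggested repairs closes this. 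Route one rests on the claim that $\supp\mu$ stays away from $\p\O$; Theorem \ref{th:main1} is purely interior and gives no control near the boundary, and nothing in the hypotheses excludes $\supp\mu$ accumulating at $\p\O$. Route two (Anzellotti/Chen--Frid) gives at best a distributional normal trace for a divergence-measure field, and identifying that trace with the pointwise expression $(\p_\nu h)^2(x-x_0)\cdot\nu$ --- in particular extracting its \emph{sign}, which is what the proof needs --- requires precisely the boundary regularity you do not have.

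The paper closes this gap with a separate boundary regularity result (Proposition \ref{prop:regboundary}, after Lemma IV.1 of \cite{SandierSerfaty2003}): using that $h$ is constant on $\p\O$, one maps conformally to the half-plane, reflects $h$ evenly across the boundary, checks via the generalized Gauss--Green formula that the reflected stress tensor acquires no singular divergence on the interface, and then bootstraps the complex equation $\p_{\bar{z}}[(\p_z\tilde{h})^2]=\p_z G$ exactly as in Proposition \ref{reg1}. This yields $T_h,|\nabla h|^2\in W^{1,p}(\O)$ up to the boundary for every $p<+\infty$, hence a genuine continuous trace of $T_h$ on $\p\O$, after which the integration by parts you want is legitimate. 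Some such argument is indispensable: without it the boundary term in your identity is not defined, so the proof as written is incomplete rather than wrong.
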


Theorems \ref{th:main1} and \ref{th:main2} allow us to say something on the repartition and on the number of vortices of solutions of the (G.L) equations and to partially answer two open problems in \cite{SandierSerfaty2007}. We now explain the connection between the (G.L) theory and the critical conditions \eqref{1} and \eqref{2}.

\subsection{Motivation: the magnetic (G.L) equations}

 In this section we assume that $\O \subset \R^2$ is a smooth bounded simply connected domain. Conditions \eqref{1} and \eqref{2} are satisfied by limiting vorticities and limiting induced magnetic fields of the (G.L) equations. The (G.L) energy was introduced in the 50's by Ginzburg and Landau to describe the behavior of a superconductor material. This energy is 
\begin{equation}\label{Ge}
G_\e(u,A)=\frac{1}{2}\int_\O \left(|\nabla u-iAu|^2+\frac{1}{2\e^2}(1-|u|^2)^2+|h-h_{ex}|^2 \right).
\end{equation}
We also set 
\begin{equation}
G_\e^0(u,A)=\frac{1}{2}\int_\O \left(|\nabla u-iAu|^2+\frac{1}{2\e^2}(1-|u|^2)^2+h^2 \right).
\end{equation}
Here $u:\O \rightarrow \mathbb{C}$ is called the order parameter, $h_{ex}$ is the intensity of the exterior magnetic field, it is a constant in $\O$ depending on $\e$, $A:\O \rightarrow \R^2$ is the vector potential of the induced magnetic field $h$ obtained through the formula $h=\curl A=\p_1A_2-\p_2A_1$ and $\e>0$ is a parameter. We are interested here in type-II superconductors, which correspond to $\e$ small. In this type of material, for large values of the applied magnetic field $h_{ex}$ the superconductivity is destroyed in small regions of the sample surronding by superconducting currents: these are the \textit{vortices}. They can be thought as zeros of the order parameter around which $u$ has a non-zero degree, for the definition of the degree see e.g.\ \cite{SandierSerfaty2007}. The main question is to understand the number and the repartion of these vortices in the limit $\e\rightarrow 0$, called the \textit{London limit}. We refer to \cite{BethuelBrezisHelein1994,SandierSerfaty2007} and references therein for a more complete description of this problem. We are interested in critical points $(u_\e,A_\e)_{\e>0}$ of \eqref{Ge} in the space
\begin{equation}
X=\{ (u,A)\in H^1(\O,\mathbb{C})\times H^1(\O,\R^2); (\curl A-h_{ex})\in L^2(\O)\}. 
\end{equation} 
They satisfy the following Euler-Lagrange equations
\begin{equation}\label{eq:GL}
\left\{
\begin{array}{rcll}
-(\nabla-iA)^2u &=& \frac{1}{\e^2}u(1-|u|^2)  & \text{ in } \O \\
-\nabla^\perp h &=& (iu,\nabla u-iAu) & \text{ in } \O \\
h &=& h_{ex} & \text{ in } \R^2 \setminus \O \\
\nu\cdot (\nabla-iA)u &=&0 & \text{ on } \p \O.
\end{array}
\right.
\end{equation}
Here $\nabla^\perp h=(-\p_2h,\p_1h)^T$, $\nu$ is the outer normal to $\p \O$ and $(\cdot,\cdot)$ is the scalar product in $\mathbb{C}$ identified with $\R^2$, i.e., $(a,b)=\frac{1}{2}(\bar{a}b+a\bar{b})$. The problem is to understand the limiting distributions of the zeros of $u_\e$ as $\e$ tends to zero. In order to do that the important quantity to consider is the \textit{vorticity} $\mu_\e$ defined by
\begin{equation}\label{def:vorticity}
\mu_\e:=\curl (iu_\e,\nabla u_\e-iA_\e u_\e)+\curl A_\e.
\end{equation}
Taking the curl of the second equation in \eqref{eq:GL} we see that the induced magnetic field and the vorticity are linked by the equation
\begin{equation}\label{eq:vorticities}
-\Delta h_\e+h_\e=\mu_\e.
\end{equation}
In \cite{SandierSerfaty2007} Sandier-Serfaty proved that for $\e>0$ small, if $G_\e^0(u_\e,A_\e) \leq C\e^{-\alpha}$ with $\alpha<1/3$, we have that 
\begin{equation}
\mu_\e \simeq 2\pi \sum_{i=1}^{N_\e}d_i^\e \delta_{x_i^\e}
\end{equation}
where $x_i^\e$ are essentially the center of the vortices, $d_i^\e$ their degrees, $N_i^\e$ the number of vortices and the meaning of $\simeq$ will be precised in Theorem \ref{th:SandierSerfaty1}. Equation \eqref{eq:vorticities} when $\mu_\e$ is a sum of Dirac masses is called the \textit{London} equation in physics. To understand the limiting distributions of the vortices we can then look at the weak limit of $\mu_\e/n_\e$ where $n_\e:=\sum_{i=1}^{N_\e}|d_i^\e|$. It is expected that this limiting vorticity satisfies some equilibrium conditions due to the fact that two vortices of opposite sign attract each other whereas two vortices of the same sign repel each other. In absence of magnetic field and with a fixed boundary data with non-zero degree, Bethuel-Brezis-H\'elein showed that vortices of solutions of the (G.L) equations converge to critical points of a \textit{renormalized energy}. Conditions \eqref{1}-\eqref{2} can be viewed as an analogue of  the condition of being a critical point of a renormalized energy in the case with magnetic field. To derive these equilibrium conditions Sandier-Serfaty passed to the limit in the conservative form of the (G.L) equations. The heuristic of the argument is the following: since a critical point of the (G.L) equations is smooth it is also a stationary point. Hence it satisfies
\begin{equation}
\dive (S_{u_\e,A_\e})=0 \text{ in } \O,
\end{equation}
where $S_{u_\e,A_\e}$ is the stress-energy tensor associated to the (G.L) energy and is expressed by ${(S_{u_\e,A_\e})}_{ij}= 2(\p_i^Au,\p_j^Au)-\left(|(\nabla-iA)u|^2-h^2+\frac{1}{2\e^2}(1-|u|^2)^2\right)\delta_{ij}$ for $i,j=1,2$ and where $\p_j^Au=\p_ju-iA_ju$. Formally by using the second equation in \eqref{eq:GL} and the fact that $|u_\e|\simeq 1 $ when $\e$ is small we can see that 
\begin{equation}
S_{u_\e,A_\e} \simeq T_{h_\e} \text{ in } \O. 
\end{equation}
Then passing formally to the limit we obtain the equilibrium condition $\dive(T_h)=0$ which is \eqref{2}. We note that to pass to the limit, a priori we need the stong convergence of $h_\e/n_\e$ in $H^1(\O)$. The weak convergence of $h_\e$ in $H^1(\O)$, or some strong convergence of $h_\e/n_\e$ in $W^{1,p}(\O)$ for some $1<p<2$ is not sufficient a priori. But using some compensation properties analogous to a Theorem of Delort \cite{Delort1991}, see  also DiPerna-Majda \cite{DiPernaMajda1988}, Sandier-Serfaty made rigorous the formal argument above. We also remark that conditions \eqref{1} and \eqref{2} are reminiscent of a formal mean-field model derived by Chapman-Rubinstein-Schatzman in \cite{ChapmanRubinsteinSchatzman1996}. We can now cite the results on the limiting vorticities problem that we divide into two parts.

\begin{theorem}\label{th:SandierSerfaty1}(Th.1.7  and Th.13.1 in \cite{SandierSerfaty2007})
Let $(u_\e,A_\e)_{\e>0}$ be solutions of the (G.L) equations such that $G_\e^0(u_\e,A_\e)\leq C\e^{-\a}$ with $\a<1/3$. Then for any $\e>0$ there exists a measure $\nu_\e$ of the form $2\pi\sum_i d_i^\e\delta_{x_i^\e}$ where the sum is finite, $x_i^\e\in \O$ and $d_i^\e\in \mathbb{Z}$ for every i, such that letting $n_\e=\sum_i|d_i^\e|$,
\begin{equation}
n_\e \leq C\frac{ G_\e^0(u_\e,A_\e)}{|\log \e|},
\end{equation}
\begin{equation}
\|\mu_\e-\nu_\e \|_{W^{-1,p}(\O)} \|\mu_\e-\nu_\e \|_{\M(\O)}\rightarrow 0,
\end{equation}
for some $p\in (1,2)$.
\end{theorem}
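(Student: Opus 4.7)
The plan is to follow the now-standard vortex ball construction of Sandier and Jerrard, adapted to the magnetic setting as in \cite{SandierSerfaty2007}. The starting point is to localize the ``bad set'' $S_\e := \{x \in \O \colon |u_\e(x)| \leq 1/2\}$, where all non-trivial topology of $u_\e$ is concentrated. Using the pointwise regularity estimates $|u_\e| \leq 1$ and $|\nabla u_\e| \leq C/\e$ that follow from the first equation in \eqref{eq:GL} via maximum principle and elliptic bootstrap, together with the energy bound $G_\e^0 \leq C\e^{-\a}$, one covers $S_\e$ by a disjoint family of balls $\{B(y_j^\e, s_j^\e)\}$ whose radii sum to $O(\e^{1-\a})$, hence to zero. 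One then applies the ball growth procedure: grow the collection monotonically to radius-scale $r$ while keeping the balls disjoint, and use the basic lower bound for the $(\nabla - iA)u$-energy on annuli to obtain, at every intermediate time, $\frac{1}{2}\int_{\cup B_j}|(\nabla-iA)u_\e|^2 \geq \pi \sum_j |D_j^\e| \log \tfrac{r}{s_j^\e} - C$, where $D_j^\e$ is the degree of $u_\e/|u_\e|$ on $\p B_j$.

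Stopping the growth at a small fixed radius $r_0$ independent of $\e$ yields the cluster of balls $B(x_i^\e, r_i^\e)$ with integer degrees $d_i^\e$ appearing in the statement. Summing the local lower bounds, together with the trivial $|h_\e - h_{ex}|^2$ contribution, gives $\pi n_\e |\log \e|(1-o(1)) \leq G_\e^0(u_\e,A_\e) + C$, which (since $n_\e \geq 1$ whenever $S_\e \neq \emptyset$) absorbs the constant and produces the asserted bound $n_\e \leq C G_\e^0/|\log\e|$. The atomic measure is then simply $\nu_\e := 2\pi \sum_i d_i^\e \delta_{x_i^\e}$, finite in view of the $n_\e$-bound.

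To compare $\mu_\e$ and $\nu_\e$, the natural tool is the Jerrard-Soner Jacobian estimate: the standard vorticity $\mu_\e = \curl(iu_\e, \nabla u_\e - iA_\e u_\e) + \curl A_\e$ is, modulo a controlled commutator, close to the gauge-invariant Jacobian of $u_\e$. On the complement of $\cup_i B(x_i^\e, r_i^\e)$ one has $|u_\e| \geq 1/2$, hence the Jacobian is essentially a boundary term; inside each ball its integral is $2\pi d_i^\e$ up to errors controlled by the local energy. Testing against a $W^{1,p'}_0$ function and using the Hölder/Sobolev embedding gives $\|\mu_\e - \nu_\e\|_{W^{-1,p}(\O)} \to 0$ for some $p \in (1,2)$. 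Simultaneously, $\|\mu_\e - \nu_\e\|_{\M(\O)}$ is controlled by $G_\e^0/|\log\e|$ and hence bounded, but not a priori small.

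I expect the main obstacle to be precisely the product statement $\|\mu_\e - \nu_\e\|_{W^{-1,p}}\|\mu_\e - \nu_\e\|_{\M(\O)} \to 0$: the $\M$-factor need not tend to zero because $\mu_\e$ is a smooth $L^1$-density while $\nu_\e$ is atomic, so no dual pairing reduces it on its own. The resolution, following Sandier-Serfaty, is to refine the ball construction so that the $W^{-1,p}$-bound actually decays like $n_\e \cdot o(1)$, with $o(1)$ depending on $r_0$ and on the allowed exponent $\a < 1/3$, and then to optimize $r_0 = r_0(\e) \to 0$ so that the product of the two norms is forced to vanish. The threshold $\a < 1/3$ enters exactly here, bounding the admissible decay rate of $r_0$ against the a priori control of the interaction energy between balls. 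All other steps are quantitative but essentially mechanical consequences of the ball construction once this delicate balance is set.
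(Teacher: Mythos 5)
This statement is not proved in the paper at all: it is quoted verbatim (as Theorem 1.7 and Theorem 13.1 of \cite{SandierSerfaty2007}) and used as an imported black box, so there is no internal proof to compare against. Your outline is essentially the standard proof from that cited source --- the Jerrard--Sandier vortex ball construction giving the lower bound $\pi n_\e|\log\e|(1-o(1))\leq G_\e^0$, followed by the Jerrard--Soner Jacobian estimate to compare $\mu_\e$ with the atomic measure $\nu_\e$ --- and you correctly identify the real crux, namely that only the \emph{product} $\|\mu_\e-\nu_\e\|_{W^{-1,p}}\|\mu_\e-\nu_\e\|_{\M(\O)}$ can be made to vanish (the mass factor alone stays of order $n_\e$), which is exactly why the theorem is stated in that product form and where the restriction $\a<1/3$ enters. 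As written your argument is a strategy sketch rather than a proof (the quantitative balance between the final radius, the Jacobian error, and the growth of $n_\e$ is asserted rather than carried out), but since the paper itself defers entirely to \cite{SandierSerfaty2007}, the appropriate resolution here is the citation, not a reproof.
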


\begin{theorem}(Th.1.7  and Th.13.1 in \cite{SandierSerfaty2007}, Th.1 in \cite{SandierSerfaty2003})\label{th:SandierSerfaty2}
With the same notations as the previous theorem, possibly after extraction, one of the following holds.
\begin{itemize}
\item[0)] $n_\e=0$ for every $\e$ small enough and then $\mu_\e$ tends to $0$ in $W^{-1,p}(\O)$.
\item[1)] $n_\e=o(h_{ex})$, and then, for some $p\in (1,2)$, $\mu_\e/n_\e$ converges in $W^{-1,p}(\O)$ to a measure $\mu$ such that 
\begin{equation}
\mu\nabla h_0=0,
\end{equation}
where $h_0$ is the solution of $-\Delta h_0+h_0=0$ in $\O$, $h_0=1$ on $\p \O$. Hence $\mu$ is a linear combination of Dirac masses supported in the finite set of critical points of $h_0$.
\item[2)]$ h_{ex}\simeq \lambda n_\e$, with $\lambda>0$, then for some $p\in (1,2)$, $\mu_\e/h_{ex}$ converges in $W^{-1,p}(\O)$ to a measure $\mu$ and $h_\e/h_{ex}$ converges strongly in $W^{1,p}(\O)$ to the solution of 
\begin{equation}
\left\{
\begin{array}{rcll}
-\Delta h +h &=&\mu & \text{ in } \O \\
h&=&1 & \text{ on } \p \O.
\end{array}
\right.
\end{equation}
Moreover the tensor $T_h$ is divergence-free in finite part.
\item[3)] $h_{ex}=o(n_\e)$ and then for some $p\in (1,2)$, $\mu_\e/n_\e$ converges in $W^{-1,p}(\O)$ to a measure $\mu$ and $h_\e/n_\e$ converges strongly in $W^{1,p}(\O)$ to the solution of 
\begin{equation}
\left\{
\begin{array}{rcll}
-\Delta h +h &=&\mu & \text{ in } \O \\
h&=&0 & \text{ on } \p \O.
\end{array}
\right.
\end{equation}
Moreover the tensor $T_h$ is divergence-free in finite part.
\end{itemize}
In cases 2) and 3) if the limit $h$ of $h_\e/n_\e$ is in $H^1(\O)$ then $T_h$ divergence-free in finite part is equivalent to the condition \eqref{2}.
\end{theorem}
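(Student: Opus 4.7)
The plan is to follow the Sandier-Serfaty strategy in three steps: (i) normalize $h_\e$ to extract a weak $W^{1,p}$ limit and a limit measure $\mu$; (ii) identify the limit via the London relation \eqref{eq:vorticities} and the boundary trace; (iii) pass to the limit in the conservation law $\dive S_{u_\e,A_\e}=0$ using a compensated-compactness identity. The main input is Theorem \ref{th:SandierSerfaty1}, which approximates $\mu_\e$ by a measure $\nu_\e=2\pi\sum d_i^\e\delta_{x_i^\e}$ of total mass $2\pi n_\e\leq C G_\e^0/|\log\e|$ and error $\mu_\e-\nu_\e$ vanishing in the product-norm sense.

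\textbf{Cases and weak compactness.} Up to extraction, the ratio $n_\e/h_{ex}$ converges to some $c\in [0,+\infty]$, yielding the four cases. Choose the natural scaling $\lambda_\e=h_{ex}$ in cases 1 and 2, and $\lambda_\e=n_\e$ in case 3. Theorem \ref{th:SandierSerfaty1} together with the vortex-ball construction gives $\|\nu_\e/\lambda_\e\|_{\M(\O)}\leq C$ and $\|(\mu_\e-\nu_\e)/\lambda_\e\|_{W^{-1,p}(\O)}\to 0$ for some $p\in(1,2)$, so $\mu_\e/\lambda_\e$ is bounded in $W^{-1,p}(\O)$; elliptic regularity on \eqref{eq:vorticities} then yields $\|h_\e/\lambda_\e\|_{W^{1,p}(\O)}\leq C$. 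Extracting again, $h_\e/\lambda_\e\rightharpoonup h$ in $W^{1,p}(\O)$ and $\mu_\e/\lambda_\e\rightharpoonup \mu$ in $W^{-1,p}(\O)$, with $-\Delta h+h=\mu$. The boundary datum $h_\e=h_{ex}$ on $\p\O$ transfers in $W^{1,p}$-trace to $h=1$ in cases 1--2 and $h=0$ in case 3. Case 0 is immediate from $\nu_\e=0$ and $\mu_\e-\nu_\e\to 0$ in $W^{-1,p}$. In case 1, $\nu_\e/h_{ex}\to 0$ since $n_\e=o(h_{ex})$, so $\mu=0$ at this scale and $h=h_0$; the measure $\mu$ that appears in the statement is obtained after the second rescaling $\mu_\e/n_\e$, whose $\M$-boundedness follows from $\|\nu_\e\|_\M\leq 2\pi n_\e$.

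\textbf{Strong convergence.} To pass to the limit in the quadratic $T_{h_\e}$ one needs to upgrade the weak $W^{1,p}$ convergence of $\tilde h_\e := h_\e/\lambda_\e$ (cases 2 and 3) to strong convergence. Split $|\nabla h_\e|^2+h_\e^2$ into a bulk part, equi-integrable thanks to Caccioppoli estimates on \eqref{eq:vorticities} and the energy bound $G_\e^0\leq C\e^{-\a}$, and a vortex part supported in the union $\bigcup_i B(x_i^\e,r_\e)$ produced by the Sandier-Serfaty vortex-ball construction, with total area $o(1)$. For $p<2$ the $L^p$ contribution of the vortex part vanishes, so strong convergence of $\tilde h_\e$ in $W^{1,p}(\O)$ holds; this also identifies $\mu$ as the weak-$*$ limit of $\nu_\e/\lambda_\e$ in $\M(\O)$.

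\textbf{The divergence-free condition.} Since $(u_\e,A_\e)$ is smooth by elliptic regularity on \eqref{eq:GL}, the inner-variation argument recalled in the introduction gives $\dive S_{u_\e,A_\e}=0$ in $\O$. Using the second equation of \eqref{eq:GL} to replace the momentum $(iu_\e,\nabla^{A_\e}u_\e)$ by $-\nabla^\perp h_\e$ and expanding, one writes
\begin{equation*}
S_{u_\e,A_\e}=T_{h_\e}+E_\e,
\end{equation*}
where $E_\e$ collects the terms proportional to $|u_\e|^2-1$ and the potential $\e^{-2}(1-|u_\e|^2)^2$. The energy bound forces $E_\e/\lambda_\e^2\to 0$ in $L^1_{\text{loc}}$ outside the vortex set, and the residual concentration produces only a finite sum of Dirac masses at the accumulation points $\{x_j\}$ of the $x_i^\e$. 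Combining this with the strong $W^{1,p}$ convergence of $\tilde h_\e$ and a Delort--DiPerna-Majda type compensation identity exploiting the null-Lagrangian/Jacobian structure of the entries of $T_h$, one obtains in the distributional limit
\begin{equation*}
\dive T_h = \sum_j c_j\,\delta_{x_j} \quad \text{in } \D'(\O),
\end{equation*}
which is the meaning of "$T_h$ divergence-free in finite part". In case 1, applying this conclusion to the smooth limit $h_0$ and pairing $\dive T_{h_0}=0$ with test vector fields after integrating by parts yields the identity $\mu\nabla h_0=0$, forcing $\mu$ to be supported on the finite critical set of $h_0$ (Morse--Sard). Finally, when $h\in H^1(\O)$ we have $T_h\in L^1(\O)$, which carries no atom, so all $c_j$ vanish and we recover exactly \eqref{2}. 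The hardest step is this last one: weak $W^{1,p}$ convergence with $p<2$ is insufficient to pass to the limit in the quadratic nonlinearity $T_{h_\e}$ by bulk arguments, and one must use the special algebraic structure of $T_h$ together with sharp localization of the energy on the vortex set to justify the compensated-compactness passage, as carried out in \cite{SandierSerfaty2003,SandierSerfaty2007}.
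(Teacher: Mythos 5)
This theorem is not proved in the paper at all: it is quoted verbatim from Sandier--Serfaty (Th.~1.7 and Th.~13.1 of \cite{SandierSerfaty2007} and Th.~1 of \cite{SandierSerfaty2003}), and the paper only gives the heuristic sketch in the introduction (approximate $\mu_\e$ by $\nu_\e$, use the London equation \eqref{eq:vorticities}, and pass to the limit in $\dive S_{u_\e,A_\e}=0$ via a Delort/DiPerna--Majda compensation argument). So there is no internal proof to compare your attempt against; what you have written is a plausible outline of the argument in the cited references, and its overall architecture (scaling dichotomy on $n_\e/h_{ex}$, weak compactness from Theorem \ref{th:SandierSerfaty1}, upgrade to strong $W^{1,p}$ convergence by isolating the vortex balls, compensated compactness for the quadratic tensor) matches the strategy the paper describes.

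Two steps in your sketch are too loose to stand as written. First, Theorem \ref{th:SandierSerfaty1} only asserts that the \emph{product} $\|\mu_\e-\nu_\e\|_{W^{-1,p}(\O)}\,\|\mu_\e-\nu_\e\|_{\M(\O)}$ tends to zero; this does not by itself give $\|(\mu_\e-\nu_\e)/\lambda_\e\|_{W^{-1,p}}\to 0$, and the actual argument has to exploit the mass bound on $\nu_\e$ together with this product structure (this is precisely the point of stating the estimate as a product). Second, in case 1 your derivation of $\mu\nabla h_0=0$ from ``pairing $\dive T_{h_0}=0$ with test vector fields'' cannot work: $\mu$ does not appear anywhere in $T_{h_0}$, and $\dive T_{h_0}=0$ holds trivially because $h_0$ is a smooth solution of $-\Delta h_0+h_0=0$. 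The correct mechanism is that $h_\e/h_{ex}\to h_0$ while the correction $(h_\e-h_{ex}h_0)$, rescaled by $n_\e$, converges to some $h_1$ with $-\Delta h_1+h_1=\mu$, and the leading nontrivial term of $\dive S_{u_\e,A_\e}$ at the cross scale $n_\e h_{ex}$ is the bilinear term in $(h_0,h_1)$, whose divergence is $(-\Delta h_1+h_1)\nabla h_0=\mu\nabla h_0$. Since the statement is an imported result, neither issue affects the paper, but both would need to be repaired if the sketch were meant as an actual proof.
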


The previous theorems describe the limiting vorticities in the so-called \textit{mean-field regime}. For the definition of \textit{divergence-free in finite part} we refer to Chapter 13 of \cite{SandierSerfaty2007}. For the purpose of this article we only need to know that if $X$ is in $L^1_{\text{loc}}(\O,\R^2)$ then $X$ is divergence-free in finite part if and only if $\dive(X)=0$ in the sense of distributions. The previous result leads to the open problem --Open problem 14 in \cite{SandierSerfaty2007}-- of describing the $h$ and $\mu$ satisfying the conditions of items 2) and 3). Theorem \ref{th:main1} brings a partial answer to that question. In the case where $\mu$ is in $H^{-1}(\O)$ then $\mu$ can only be supported on some curves or sets of full 2D-Lebesgue measure. Vorticities of \textit{minimizers} of the (G.L) energy are known to converge to a measure $\mu$ absolutely continuous with respect to the Lebesgue measure if $h_{ex}\simeq \lambda |\log \e|$. More precisely we have $\mu=h\textbf{1}_{\{|\nabla h|=0\}}$ where $h$ is the solution of the obstacle problem, cf.\ Example 3 and Chapter 7 of \cite{SandierSerfaty2007}. Contreras-Serfaty in \cite{ContrerasSerfaty2012} constructed local-minimizers of the Ginzburg-Landau energy with a number of vortices as large as $|\log \e|$ and external magnetic fields as large as $\e^{-1/7}$.  The vorticities of these local minimizers also converge to a measure related to the obstacle problem. In \cite{Aydi2008}, Aydi constructed \textit{non-minimizing} solutions of the (G.L) equations whose vorticities concentrate on lines or on circles as $\e$ tends to zero.  These examples show that both types of measures appearing in the decomposition \eqref{eq:decomposition*} are indeed attained by solutions of the (G.L) equations. 

Since we work under the hypothesis that $h$ is in $H^1(\O)$, we recall that this case happens if we assume that $G_\e(u_\e,A_\e)\leq Ch_{ex}^2$ and $n_\e \simeq \lambda h_{ex}$ for some $\lambda >0$, cf.\ \cite{SandierSerfaty2003}.


Case 3) of Theorem \ref{th:SandierSerfaty2} can only happen if there is a number of vortices much larger than the applied magnetic field. It is an open question--see Open problem 18 in \cite{SandierSerfaty2007}-- to know if there can exist (G.L) solutions with a number of vortices much larger than the exterior magnetic field. Our Theorem \ref{th:main2} says that, in star-shaped domains, this is very likely to be impossible since then the limiting vorticity has to be zero. This indicates that vortices tend to escape the domain in that case.

\subsection{Previous results on the problem}

We now state the previous results obtained by different authors on the regularity of $(h,\mu)$ satisfying \eqref{1} and \eqref{2} and on the non-existence of non-trivial $(h,\mu)$ satisfying $h=0$ on $\p \O$. In \cite{SandierSerfaty2007} the authors proved that if we assume $\mu\in L^p$ for some $p>1$ then we necessarily have $\mu=h\textbf{1}_{\{|\nabla h|=0\}}$. They also obtained the same result under different hypotheses. Indeed in \cite{SandierSerfaty2003} they assumed that $\nabla h\in \C^0(\O)$ and $|\nabla h|\in BV(\O)$ to obtain $\mu=h\textbf{1}_{\{|\nabla h|=0\}}$. Under both set of hypothesis $h$ is also solution of a free-boundary problem studied by Caffarelli-Salazar \cite{CaffarelliSalazar2002} and Caffarelli-Salazar-Shagohlian \cite{CaffarelliSalazarShahgholian}. These authors studied the equation
\begin{equation}\label{eq:Caffarelli}
\Delta h=h \text{ on } \{|\nabla h| \neq 0 \}
\end{equation}
where this equation has to be understood in some viscosity sense assuming only that $h$ is continuous a priori, cf.\ Definition \ref{def:viscositysol1}. They proved that a solution is in fact in $\C^{1,1}(\O)$ and they studied the regularity of the free boundary $\p \{ |\nabla h|>0\}$. In \cite{SandierSerfaty2007}, Sandier-Serfaty also obtained that if $\mu=h\textbf{1}_{\{|\nabla h|=0 \}}$ and $h=0$ on $\p \O$ then $h$ has to be identically zero. 

In \cite{Le2009}, Le studied the conditions \eqref{1} and \eqref{2} under the assumption that $\mu$ is absolutely continuous with respect to the Hausdorff measure restricted to a simple smooth curve $\Gamma$ with a nowhere-zero density which is in $W^{2,p}(\Gamma)$ for some $p>1$. In this case he obtained that $h$ is constant on $\Gamma$. Besides he proved that if the domain is ``thin" enough then there is no nontrivial solution of the problem with $h=0$ on $\p \O$. He also obtained that $\mu$ is a Radon measure with a fixed sign and that if the domain is ``thin" enough then it has to be positive if $h=1$ on $\p \O$. For the appropriate notion of thinness in that case we refer to \cite{Le2009}.

At last a similar, but simpler problem than conditions \eqref{1} and \eqref{2} was studied by the author in \cite{Rodiac2016}. We considered the conditions $h\in H^1(\O)$, $\Delta h=\mu \in \M(\O)$ and $\dive(\tilde{T_h})=0$ where $(\tilde{T_h})_{ij}=2\p_ih\p_jh-|\nabla h|^2 \delta_{ij}$ for $i,j=1,2$. In this problem the condition $\dive(\tilde{T_h})=0$ in $\O$ is equivalent to $(\p_zh)^2$ holomorphic in $\O$, where $2\p_z=\p_1-i\p_2$. The connection with holomorphic functions makes it simpler than the condition \eqref{2}. We obtained that, locally, $\mu$ is supported on some curve or some curves which intersect and these curves form the zero set of a harmonic function or a multi-valued harmonic function. This problem is related to limiting vorticities of solutions of the (G.L) equations without magnetic field but also to the stationary incompressible Euler equations in fluid mechanics and to systems of stationary point-vortices or stationary Helmholtz-Kirchhoff's systems.

\subsection{Plan of the paper and methods of the proof}

In Section \ref{I} we recall that if $h$ satisfies \eqref{2} then $|\nabla h|$ and $T_h$ are continuous. We also recall that if $h$ satisfies \eqref{2} then a Pohozaev formula holds for $h$ and this formula implies the non-existence result Theorem \ref{th:main2}. But we prove that if $\O$ is an annulus then there exists a non trivial solution of \eqref{1}-\eqref{2} such that $h=0$ on $\p \O$.
In Section \ref{II} we study the local behavior of $h$ and $\mu$ satisfying \eqref{1} and \eqref{2} near a point $z_0$ such that $|\nabla h|(z_0)\neq 0$. We obtain that near such a point the measure $\mu$ is absolutely continuous with respect to the 1D-Hausdorff measure restricted to a $\C^1$-curve. In order to do that we use the complex form of Equation \eqref{2} which can be written as 
\begin{equation}\label{eq:2complex}
\p_{\bar{z}}\left[(\p_zh)^2 \right]=\frac{1}{4}\p_z(h^2),
\end{equation}
where we let $\p_{\bar{z}}:=\frac{1}{2}(\p_1+i\p_2)$ and $\p_z:=\frac{1}{2}(\p_1-i\p_2)$. Thanks to this equation we deduce that in a ball of radius $R>0$ around $z_0$ we have $\nabla h= \theta F$ where $\theta \in BV(B_R(z_0),\{\pm 1\})$ and $F\in W^{1,q}(B_R(z_0),\R^2)$ for every $1\leq q<+\infty$. We then employ the method of \cite{Rodiac2016} to obtain that $\supp \mu \cap B_R(z_0)$ is equal to the boundary, in a measure theoretic sense, of $\{\theta=1\}$. A finer analysis, that uses special properties of sets of finite perimeter in $\R^2$ cf.\ \cite{AmbrosioCasellesMasnouMorel2001}, yields that this is a $\C^1$ curve and $h$ is constant on that curve. In Section \ref{III} we prove Theorem \ref{th:main1}. This rests upon the analysis of the previous section. In particular, thanks to sufficiently good estimates obtained in Section \ref{I}, we are able to prove that $\nabla h$ is in $BV_{\text{loc}}(\O)$. We can then adapt an argument of Sandier-Serfaty \cite{SandierSerfaty2003} to prove that $\Delta h_{\lfloor \{|\nabla h|=0 \}}=0$ and obtain the first item of Theorem \ref{th:main1}.
 To prove that $h$ is constant on the connected components of the support of $\mu$, we can use three different arguments. All of them use the local study near the regular point of  $h$ of the previous section and a Morse-Sard type Theorem for Sobolev functions, cf.\ \cite{dePascale2001,Figalli2008}. The first one rests entirely on the description of the measure $\mu$ obtained in the first point of Theorem \ref{th:main1}. The second one does not use this full description but rather fine properties of solutions of $\Delta h\in \M(\O)$ and results of Alberti-Bianchini-Crippa  \cite{AlbertiBianchiniCrippa2014b,AlbertiBianchiniCrippa2014a}. The third proof employs a connection with the problem studied by Caffarelli-Salazar in \cite{CaffarelliSalazar2002}
 
%
 

\subsection{Notations}

Throughout the paper we use the following notations: 
\begin{itemize}
\item[$\bullet$] $B_R(z)=\{ |z-z_0|<R \}$ is the ball of center $z_0$ and radius $R>0$, we also denote it by $B(z,R)$ in some part of the paper;
\item[$\bullet$] $\textbf{1}_{A}$ denotes the characteristic function of a set $A$;
\item[$\bullet$] $\mathcal{M}(\O)$ denotes the Banach space of Radon measures in $\O$, i.e., the Borel measures which are locally finite on $\O$. These measures are not necessarily postive. With some abuse of notations we also use $\mathcal{M}(\O)$ for vector-valued Radon measures;
\item[$\bullet$] if $\mu$ is a Radon measure, $|\mu|$ denotes its total variation defined by $|\mu|(E)= \sup \{\sum_{i=1}^{+\infty}|\mu(E_j)|; E_j \text{ are Borel sets, pairwise disjoints and } E=\bigcup_{i=1}^{+\infty}E_j\}$ for every Borel set $E$;
\item[$\bullet$] for $\mu$ a Radon measure its norm is denoted by $\|\mu\|_{\mathcal{M}(\O)}=|\mu|(\O)= \sup \{\int_\O \varphi d\mu ; \varphi\in \C^0_c(\O), \|\varphi\|_{L^\infty(\O)}\leq 1 \}$; 
\item[$\bullet$] $\mathcal{L}^2$ is the Lebesgue measure on $\R^2$, if $U$ is a Borel set then $|U|$ denotes its Lebesgue measure;
\item[$\bullet$] $\mathcal{H}^k$ is the $k$-dimensional Hausdorff measure;
\item[$\bullet$] If $E$ is a set of finite perimeter in $\O$ we denote by $\p_\star E$ its measure theoretic boundary, also called essential boundary, cf.\ \cite{EvansGariepy2015,AmbrosioFuscoPallara2000};
\item[$\bullet$] if $\mu,\nu$ are two Radon measures with $\nu$ postive in $\O$, $\mu << \nu$ means that $\mu$ is absolutely continuous with respect to $\nu$, i.e., $\mu(A)=0$ if $\nu(A)=0$;
\item[$\bullet$] If $\mu,\nu$ are two positive Radon measures we write $\mu \perp \nu$ to say that they are mutually singular, i.e., there exists a Borel set $E$ such that $\mu(E)=0$ and $\nu(\O \setminus E)=0$.
\item[$\bullet$] If $f$ is in $BV(\O)$, we denote by $Df$ its distributional derivative, which is a Radon measure and by $\nabla f$ the absolute continuous part of $Df$ with respect to the Lebesgue measure.
\item[$\bullet$] We recall that a set $M$ is \textit{countably $k$-rectifiable} if it is $\mathcal{H}^k$-mesurable and $M= \bigcup_{i=0}^{+\infty} M_i$, where $\mathcal{H}^k(M_0)=0$ and $M_i=f_i(A_j)$, for some set $A_j\subset \R^n$ and some Lipschitz functions $f_j:A_j\rightarrow \R$. A set $M$ is $\mathcal{H}^k$-rectifiable if it is countably $\mathcal{H}^k$-rectifiable and $\mathcal{H}^k(M)<+\infty$. A set $M$ is locally $\mathcal{H}^k$-rectifiable if for every $x$ in $M$ there exists $r>0$ such that $B_r(x)\cap M$ is $\mathcal{H}^k$-rectifiable.
\end{itemize}

\section{First regularity results and the non-existence result}\label{I}

\subsection{Preliminaries regularity results} 

It is known that if $\mu$ is only a Radon measure then Equation \eqref{1} is critical for the Calder\'on-Zygmund theory. We cannot expect more regularity than $h$ in $W^{1,p}_{\text{loc}}(\O)$ for every $1\leq p<2$, cf.\ \cite{Ponce2016}. But we assumed that $h$ is in $H^1(\O)$, to give a meaning to \eqref{2}. Then Equation \eqref{2} gives us more regularity.
\begin{proposition}\label{reg1}
Let $h$ be in $H^1(\O)$ which satisfies \eqref{2} then
\begin{equation}
|\nabla h|^2\in W^{1,p}_{\text{loc}}(\O) \text{ for any } 1\leq p <+\infty,
\end{equation}
\begin{equation}
T_h  \in W^{1,p}_{\text{loc}}(\O,\R^4)  \text{ for any } 1\leq p <+\infty,
\end{equation}
where $T_h$ is defined by \eqref{eq:defTh}. In particular we have
\begin{equation}
|\nabla h| \text{ is in } \C^{0,\alpha}(\O) \text{ for every } 0<\alpha<\frac{1}{2},
\end{equation}
\begin{equation}
h \text{ is in } W^{1,\infty}_{\text{loc}}(\O), \text{i.e., h is locally lipschitz}.
\end{equation}
\end{proposition}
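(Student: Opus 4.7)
The natural first move is to pass to the complex reformulation \eqref{eq:2complex} and to introduce $\Phi:=(\p_z h)^2$. Because $\p_z h \in L^2_{\text{loc}}(\O)$ we have $\Phi \in L^1_{\text{loc}}(\O)$; moreover $|\Phi| = |\p_z h|^2 = \tfrac{1}{4}|\nabla h|^2$, so any Sobolev control on $\Phi$ will transfer directly to $|\nabla h|^2$. Equation \eqref{eq:2complex} reads $\p_{\bar z}\Phi = \tfrac{1}{2}h\,\p_z h$, which places us in the framework of the $\bar\p$-equation with a right-hand side that it will be easy to bootstrap.

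The key tool I would invoke is the following Cauchy-transform regularity statement: if $u \in L^1_{\text{loc}}(\O)$ and $\p_{\bar z}u = f \in L^p_{\text{loc}}(\O)$ for some $1<p<\infty$, then $u \in W^{1,p}_{\text{loc}}(\O)$. To justify it, fix $B \Subset \O$ and a cutoff $\eta \in \C^\infty_c(\O)$ equal to $1$ on $B$, extend $\eta f$ by zero to $\R^2$, and set $v := T(\eta f)$, where $T$ is the Cauchy transform; then $v \in W^{1,p}(\R^2)$ (the Beurling--Ahlfors transform being bounded on $L^p$), and the difference $u-v$ satisfies $\p_{\bar z}(u-v)=0$ on $B$, so it is holomorphic. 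A holomorphic function that lies in $L^1_{\text{loc}}$ is smooth, whence $u \in W^{1,p}_{\text{loc}}(B)$.

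With this tool in hand the bootstrap is short. Since $\O \subset \R^2$ and $h \in H^1(\O)$, the 2D Sobolev embedding gives $h \in L^q_{\text{loc}}(\O)$ for every $q<\infty$, while $\p_z h \in L^2_{\text{loc}}(\O)$, so H\"older yields $h\,\p_z h \in L^r_{\text{loc}}(\O)$ for every $r<2$. Applying the lemma to $\Phi$ gives $\Phi \in W^{1,r}_{\text{loc}}(\O)$ for every $r<2$, and the injection $W^{1,r}\hookrightarrow L^{2r/(2-r)}$ in the plane lifts this to $\Phi \in L^s_{\text{loc}}(\O)$ for every $s<\infty$. Consequently $|\nabla h| \in L^s_{\text{loc}}$ for every $s<\infty$, hence $h\,\p_z h \in L^p_{\text{loc}}(\O)$ for every $p<\infty$, and a second application of the lemma gives $\Phi \in W^{1,p}_{\text{loc}}(\O)$, and therefore $|\nabla h|^2 \in W^{1,p}_{\text{loc}}(\O)$, for every $1\leq p<\infty$.

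The remaining conclusions follow quickly. Morrey's embedding in two dimensions produces $|\nabla h|^2 \in \C^{0,\alpha}_{\text{loc}}(\O)$ for every $\alpha<1$, and the $1/2$-H\"older continuity of $\sqrt{\cdot}$ on $[0,\infty)$ yields $|\nabla h| \in \C^{0,\alpha}(\O)$ for every $\alpha<1/2$; in particular $|\nabla h|$ is locally bounded and hence $h \in W^{1,\infty}_{\text{loc}}(\O)$. Finally, each entry of $T_h$ is a real-linear combination of $\RE\Phi$, $\IM\Phi$ and $h^2$, since $(\p_x h)^2-(\p_y h)^2 = 4\RE\Phi$ and $2\p_x h\,\p_y h = -4\IM\Phi$; the first two lie in $W^{1,p}_{\text{loc}}$ by the bootstrap and $h^2 \in W^{1,\infty}_{\text{loc}}$ once $h$ is locally Lipschitz, which gives the stated regularity of $T_h$. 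The main obstacle is the first step of the bootstrap, because the datum $\Phi \in L^1_{\text{loc}}$ forces one to rely on the $L^1{+}L^p$ version of the $\bar\p$-lemma, with the holomorphic remainder absorbed using mean-value smoothing, rather than on a direct Cauchy-transform identity that would require better a priori integrability of $\Phi$.
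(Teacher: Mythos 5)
Your proposal is correct and follows essentially the same route as the paper: both bootstrap the $\bar\partial$-equation $\p_{\bar z}[(\p_zh)^2]=\tfrac12 h\p_zh$ via Calder\'on--Zygmund/Cauchy-transform regularity, first obtaining $(\p_zh)^2\in W^{1,r}_{\text{loc}}$ for $r<2$, then upgrading the right-hand side to $L^p_{\text{loc}}$ for all $p<\infty$ by Sobolev embedding, and finally passing to $|\nabla h|$ via the H\"older continuity of the square root and to $T_h$ via its expression in terms of $(\p_zh)^2$ and $h^2$. The only difference is presentational: you spell out the $\bar\partial$-lemma with the holomorphic remainder, where the paper simply invokes ellipticity of $\p_{\bar z}$.
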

This proposition is proved in \cite{SandierSerfaty2007} (proof of Theorem 13.1 p.278). We sketch the argument here for the comfort of the reader. We will use the complex form of the equation \eqref{2} throughout the paper. We let $\p_{\bar{z}}:=\frac{1}{2}(\p_1+i\p_2)$ and $\p_z:=\frac{1}{2}(\p_1-i\p_2)$. Then we can write \eqref{2} as \eqref{eq:2complex}.
\begin{proof}
Since $h$ is in $H^1(\O)$ we can see that $h^2$ is in $W^{1,p}(\O)$ for every $1\leq p< 2$ and $
\p_z(h^2)=2h\p_zh$. Now since the operator $\p_{\bar{z}}$ is elliptic, from Calder\'on-Zygmund 
estimates by using \eqref{eq:2complex} we find that $(\p_zh)^2$ is in $W^{1,p}_{\text{loc}}(\O,\mathbb{C})$ for 
every $1\leq p<2$. But by Sobolev embeddings we have that $(\p_zh)$ is in $L^q_{\text{loc}}(\O,\mathbb{C})$ for every 
$1\leq q<+\infty$ and so is $|\nabla h|$. Thus we deduce that $h\p_zh$ is in $L^q_{\text{loc}}(\O,\mathbb{C})$ for every 
$1\leq q<+\infty$. By applying elliptic estimates once more we obtain $(\p_zh)^2=(\p_1h)^2-
(\p_2h)^2-2i(\p_1h)(\p_2h)$ is in $W^{1,q}_{\text{loc}}(\O,\mathbb{C})$ for every $1\leq q<+\infty$. From that we also 
have that $|\nabla h|^2=|(\p_zh)^2|$ is in $W^{1,q}_{\text{loc}}(\O)$ for every $1\leq q<+\infty$ and so is 
$T_h$ since its entries are formed by the components of $(\p_zh)^2$ and $h^2$. By using Sobolev 
embeddings we obtain $|\nabla h|^2 \in \C^{0,\a}(\O)$ for every $0<\a<1$ and thus $|\nabla h|\in \C^{0,\beta}(\O)$ 
for every $0<\beta<\frac{1}{2}$. This implies that $h$ is in $W^{1,\infty}_{\text{loc}}(\O)$.
\end{proof}

From Example \ref{ex:example1} we can see that the Lipschitz regularity is optimal. This was obtained by rather direct bootstrap arguments from Equation \eqref{eq:2complex}. To write this complex form of Equation \eqref{2} we used that $h$ is real-valued. If we consider more general inner-variational problems for vector-valued, or complex-valued unknows, Lipschitz regularity can also be obtained but with the help of more elaborated arguments, see \cite{IwaniecKovalevOnninen2013}. We can wonder if Equation \eqref{2} alone implies that $D^2h$ is in $\M(\O)$ and thus \eqref{1}. The following example shows that this is not the case.

\begin{example}\label{ex:example3}
Let $\O=(-1,1)\times (-1,1)$. Let $g$ in $L^1((-1,1),\{\pm 1\})$  be a  function which is not in $BV$, we take $h(x,y)=f(x)=e^{\int_0^xg(s)ds}$, for $(x,y)\in \O$. We can see that $f'(x)=g(x)e^{\int_0^xg(s)ds}$. We thus have $(f')^2=f^2$ and then $h$ satisfies \eqref{2}, but $f''$ in the sense of distributions is not a Radon measure. Indeed if $f'$ were in $BV$ then $g$ would be in $BV$.
\end{example}

Now that we know that $|\nabla h|$ is in $L^\infty_{\text{loc}}(\O)$ we can already restrict the class of measure $\mu$ which can satisfy \eqref{1} and \eqref{2} thanks to the following result of Silhavy, Theorem 3.2 in \cite{Silhavy2005}, and Chen-Torres-Ziemer, Lemma 2.25 in \cite{ChenTorresZiemer2009}:

\begin{proposition}(\cite{Silhavy2005},\cite{ChenTorresZiemer2009})
Let $F$ be in $L^\infty(\O,\R^2)$ such that $\dive F$ is a Radon measure. Then we have that $|\dive F|<< \mathcal{H}^1$, meaning that if $A \subset \R^2$ is such that $\mathcal{H}^1(A)=0$ then $|\dive F|(A)=0$ and thus $\dive F(A)=0$.
\end{proposition}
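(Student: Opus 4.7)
The plan is to reduce to showing $\mu:=\dive F$ vanishes on every compact $K\subset \O$ with $\H^1(K)=0$; granting this, one recovers $|\mu|(A)=0$ for every Borel $A$ with $\H^1(A)=0$ as follows. For any Borel $E\subset A$, inner regularity of the Radon measure $|\mu|$ produces compact sets $K_n\subset E$ with $|\mu|(E\setminus K_n)\to 0$; each $K_n$ is $\H^1$-null, hence $\mu(K_n)=0$ by the compact case, so $\mu(E)=\lim_n \mu(K_n)=0$. Since $|\mu|(A)$ is the supremum of $\sum_i|\mu(E_i)|$ over finite Borel partitions of $A$, we obtain $|\mu|(A)=0$.

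So fix a compact $K\subset \O$ with $\H^1(K)=0$ and $\e>0$. By outer regularity of $|\mu|$, choose an open $V_0\supset K$ with $|\mu|(V_0\setminus K)\leq \e$. For any $\d>0$, the hypothesis $\H^1(K)=0$ together with the compactness of $K$ produces a finite cover of $K$ by open balls $B_i=B(x_i,r_i)$ with $\overline{B_i}\subset V_0$ and $\sum_i r_i<\d$. The set $V:=\bigcup_i B_i\subset V_0$ is then open and of finite perimeter, and its essential boundary satisfies $\H^1(\p_\star V)\leq \sum_i 2\pi r_i<2\pi \d$.

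The crucial ingredient is the Gauss-Green formula for $L^\infty$ divergence-measure fields established in \cite{Silhavy2005,ChenTorresZiemer2009}: there exists an interior normal trace $F_i$ of $F$ on $\p_\star V$ with $\|F_i\|_{L^\infty(\p_\star V,\H^1)}\leq \|F\|_{L^\infty(\O)}$ and
\begin{equation*}
\mu(V)=-\int_{\p_\star V} F_i\cdot \nu_V\, d\H^1,
\end{equation*}
which immediately yields $|\mu(V)|\leq 2\pi \|F\|_{L^\infty(\O)}\d$. Combining with $|\mu(V\setminus K)|\leq |\mu|(V_0\setminus K)\leq \e$ produces $|\mu(K)|\leq 2\pi \|F\|_{L^\infty(\O)}\d+\e$, and letting $\d\to 0$ and then $\e\to 0$ gives $\mu(K)=0$.

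The main obstacle is the Gauss-Green identity invoked above: the normal trace $F_i$ is not a classical pointwise restriction of $F$ but a distributional object, and its construction for general $L^\infty$ divergence-measure fields is precisely the non-trivial content of \cite{Silhavy2005,ChenTorresZiemer2009}. A more self-contained alternative would be to regularize $F$ by convolution, $F_\eta:=F\ast\rho_\eta$, apply the classical divergence theorem to the smooth $F_\eta$ on $V$, and pass to the limit $\eta\to 0$ using $\|F_\eta\|_\infty\leq \|F\|_\infty$ together with the weak$^\star$ convergence $\dive F_\eta=\mu\ast \rho_\eta\rightharpoonup \mu$; the delicate point would then be a Sard-type selection of the radii $r_i$ so that the classical boundary traces of $F_\eta$ on $\p V$ converge in $L^1(\p V,\H^1)$ to the normal trace of $F$.
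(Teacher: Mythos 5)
The paper does not actually prove this proposition: it cites Theorem 3.2 of \cite{Silhavy2005} and Lemma 2.25 of \cite{ChenTorresZiemer2009} and only remarks that the proof there ``is proved with the help of the notion of $p$-capacity''. So your task was to supply a proof from scratch, and your reduction to compact $\H^1$-null sets, the covering of $K$ by finitely many balls with $\sum_i r_i<\d$, and the bookkeeping with $|\mu|(V_0\setminus K)\leq \e$ are all correct (the only cosmetic imprecision is that the Gauss--Green identity computes $\mu$ on the measure-theoretic interior $V^1$ rather than on $V$, but since $K\subset V\subset V^1\subset V_0$ this is absorbed into the $\e$).

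The genuine problem is circularity in the key step. In both \cite{Silhavy2005} and \cite{ChenTorresZiemer2009} the absolute continuity $|\dive F|<<\H^{n-1}$ is established \emph{first} (Theorem 3.2, resp.\ Lemma 2.25) and is then used to construct the normal traces and prove the Gauss--Green formula (Proposition 4.2/Theorem 4.4, resp.\ Theorem 5.2): the representation of the trace as an $\H^1$-measurable function on $\p_\star V$ bounded by $\|F\|_{L^\infty}$ rests precisely on the fact that $\dive F$ cannot charge $\H^1$-null portions of the boundary layer. Invoking that formula to prove the proposition therefore assumes what is to be shown. The fix is cheap and turns your argument into the capacity proof the paper alludes to: instead of the normal trace, test $\mu=\dive F$ against an explicit Lipschitz cutoff $\phi=\max_i\psi_i$, where $\psi_i\equiv 1$ on $B(x_i,r_i)$, $\psi_i\equiv 0$ outside $B(x_i,2r_i)$ and $|\nabla\psi_i|\leq C/r_i$, so that $\int|\nabla\phi|\leq \sum_i\int|\nabla\psi_i|\leq C\sum_i r_i<C\d$. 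Then $|\langle\mu,\phi\rangle|=\left|\int_\O F\cdot\nabla\phi\right|\leq C\|F\|_{L^\infty}\d$ directly from the definition of the distributional divergence, while $|\langle\mu,\phi\rangle-\mu(K)|\leq|\mu|(V_0\setminus K)\leq\e$ as in your argument; letting $\d\to 0$ and then $\e\to 0$ gives $\mu(K)=0$ with no appeal to trace theory. Your concluding remark about mollifying $F$ points in the same self-contained direction, but the cutoff above avoids the delicate boundary-trace convergence you flag there.
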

This result is proved with the help of the notion of $p$-capacity. We do not reproduce the proof here. This directly implies that

\begin{corollary}
Let $h \in H^1(\O)$ be such that \eqref{1} and \eqref{2} are satisfied then $\mu << \mathcal{H}^1$.
\end{corollary}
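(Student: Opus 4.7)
The plan is to reduce the claim to the preceding Silhavy / Chen--Torres--Ziemer proposition by writing $\mu$ (up to an absolutely continuous term) as the divergence of a bounded vector field. By Proposition \ref{reg1}, $h$ is locally Lipschitz in $\O$, so both $h$ and $\nabla h$ lie in $L^\infty_{\text{loc}}(\O)$. Equation \eqref{1} then reads
\begin{equation*}
\dive(-\nabla h) = \mu - h\,\L^2 \quad \text{in } \D'(\O),
\end{equation*}
and the right-hand side is a Radon measure in $\M(\O)$ since $\mu\in\M(\O)$ by hypothesis and $h\,\L^2\in\M(\O)$ by local boundedness of $h$.

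Next I would fix an arbitrary relatively compact open subdomain $\O'\Subset \O$. On $\O'$ the vector field $F := -\nabla h|_{\O'}$ belongs to $L^\infty(\O',\R^2)$, and its distributional divergence (the restriction of $\mu - h\,\L^2$ to $\O'$) is a Radon measure. Applying the quoted proposition directly yields $|\dive F| \ll \H^1$ on $\O'$. For any Borel set $A\subset\O'$ with $\H^1(A)=0$, one also has $\L^2(A)=0$ (since $\H^1(A)=0$ forces $\H^s(A)=0$ for every $s>1$, in particular for $s=2$), so $(h\,\L^2)(A)=0$ as well. Combining,
\begin{equation*}
\mu(A) \;=\; \dive(F)(A) \;+\; (h\,\L^2)(A) \;=\; 0.
\end{equation*}
Since $\O'\Subset\O$ was arbitrary, $\mu\ll\H^1$ on $\O$, which is the conclusion.

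There is no substantive obstacle: the argument is essentially a one-line application of the cited proposition, once Proposition \ref{reg1} provides the Lipschitz regularity of $h$. The only point deserving any care is the passage from the global $L^\infty$ hypothesis of the quoted proposition to the local regularity $\nabla h \in L^\infty_{\text{loc}}(\O,\R^2)$ we actually have, which is handled by restricting to $\O'\Subset\O$ and using the locality of absolute continuity of measures.
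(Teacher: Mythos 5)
Your argument is correct and is exactly the one the paper intends: the paper simply states that the Silhavy/Chen--Torres--Ziemer proposition ``directly implies'' the corollary, and your write-up supplies the same reduction (applying the proposition to $F=-\nabla h$, which is bounded locally by Proposition \ref{reg1}, and absorbing the harmless $h\,\mathcal{L}^2$ term). The localization to $\O'\Subset\O$ is the right way to handle the local-versus-global $L^\infty$ issue.
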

This means that $\mu$ cannot charge sets of Hausdorff dimension less that $1$. But a priori any Hausdorff dimension between 1 and 2 included are allowed for the components of the support.                     

We also state and prove a boundary regularity result that we need to derive a Pohozaev-type formula. This result is the content of Lemma IV.1 in \cite{SandierSerfaty2003} but we provide the proof for the comfort of the reader.

\begin{proposition}\label{prop:regboundary}
Let $h$ be in $H^1(\O)$ which satisfies \eqref{2} and such that $h$ is constant on $\p \O$ then $T_h$ and $|\nabla h|^2$ are in $W^{1,p}(\O)$ for every $1\leq p<+\infty$. In particular $T_h$ and $|\nabla h|^2$ are well-defined continuous functions on $\p \O$.
\end{proposition}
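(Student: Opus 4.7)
The plan is to extend the interior Calder\'on-Zygmund bootstrap of Proposition \ref{reg1} up to the boundary via a Schwarz-type reflection of $F:=(\partial_z h)^2$ across $\partial\Omega$. First, localize near an arbitrary boundary point: since $\partial\Omega$ is smooth, a conformal (biholomorphic) straightening $\Phi$ sends a neighborhood of such a point in $\Omega$ onto an upper half-disk $D_+$ and a portion of $\partial\Omega$ onto the real diameter $D\cap\{y=0\}$. Under this change of variables the complex-form equation \eqref{eq:2complex} retains its structure, becoming
\begin{equation*}
\partial_{\bar w}\!\left[(\partial_w \tilde h)^2\right] \;=\; \tfrac{1}{4}\,|\Phi'|^{-2}\,\partial_w(\tilde h^2),
\end{equation*}
with a smooth positive multiplier $|\Phi'|^{-2}$, while the boundary condition becomes $\tilde h\equiv c$ on the flat diameter.

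The crucial observation is the natural boundary condition on $F:=(\partial_w\tilde h)^2$. Because $\tilde h-c$ has vanishing trace on $\{y=0\}$, the tangential derivative $\partial_1\tilde h$ vanishes there in the $H^{-1/2}$ sense, so $\partial_w\tilde h = -\tfrac{i}{2}\partial_2\tilde h$ and hence $F$ is purely real (in fact nonpositive) on $\{y=0\}$. This is a Riemann--Hilbert-type boundary condition. Next, extend $F$ to the lower half-disk by Schwarz reflection $F(x,y):=\overline{F(x,-y)}$ for $y<0$, and extend $\tilde h^2$ by even reflection across $\{y=0\}$. The identities
\begin{equation*}
\partial_{\bar w}\bigl[\overline{F(x,-y)}\bigr] \;=\; \overline{(\partial_{\bar w}F)(x,-y)}, \qquad \partial_w\bigl[\tilde h^2(x,-y)\bigr] \;=\; \overline{\partial_w(\tilde h^2)(x,-y)},
\end{equation*}
valid because $\tilde h$ is real, combined with continuity of $F$ across $\{y=0\}$ guaranteed by $\IM F=0$ there, show that the extended $F$ satisfies the same transformed equation on the full disk $D$ distributionally, with no singular contribution on the interface.

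Applying the interior bootstrap of Proposition \ref{reg1} to the extended $F$ on $D$ yields $F\in W^{1,p}_{\text{loc}}(D)$ for every $1\leq p<\infty$. Pulling back by $\Phi^{-1}$ and covering $\partial\Omega$ by finitely many such neighborhoods, combined with the interior regularity from Proposition \ref{reg1}, gives $(\partial_z h)^2\in W^{1,p}(\Omega)$. Since both $T_h$ and $|\nabla h|^2$ are built algebraically from $(\partial_z h)^2$ and $h^2$, they inherit the same regularity, and continuity on $\partial\Omega$ follows from the Morrey embedding $W^{1,p}\hookrightarrow C^{0,1-2/p}$ for $p>2$.

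The main obstacle is to justify rigorously that the reflected $F$ satisfies the equation distributionally with no interface contribution on $\{y=0\}$, starting from merely $H^1$ regularity. This is handled by a bootstrap: the interior estimate of Proposition \ref{reg1} first provides enough local regularity of $F$ near $\{y=0\}$ for the trace to be well defined and the reflection to produce a distributional solution; the gained boundary regularity is then fed back at the next step to close the argument.
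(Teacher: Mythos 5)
Your overall strategy coincides with the paper's: conformally straighten the boundary, extend across the flat piece by reflection (your Schwarz reflection $F(x,y):=\overline{F(x,-y)}$ of $F=(\p_z h)^2$ is exactly the extension induced by the paper's even reflection $\tilde h(x,y)=h(x,-y)$), check that no singular term appears on the interface, and then rerun the interior bootstrap of Proposition \ref{reg1}. Your observation that the vanishing of the tangential derivative forces $F$ to be real on the flat boundary is also the correct structural reason why the two one-sided contributions cancel.

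The genuine gap is precisely the step you flag as ``the main obstacle,'' and your proposed fix does not close it. To show that the glued $F$ satisfies the equation distributionally across $\{y=0\}$ you integrate by parts separately in each half-disk, and this requires a well-defined normal trace of $F$ (equivalently, of the vector field it generates under $\p_{\bar z}=\tfrac12\dive(\,\cdot\,,i\,\cdot\,)$) on the interface. A priori $F\in L^1$ up to the boundary with $\p_{\bar z}F\in L^p$, $p<2$, and an $L^1$ function has no trace; the ``continuity of $F$ across $\{y=0\}$'' you invoke is exactly the boundary regularity being proved. Your bootstrap cannot start: the interior estimates of Proposition \ref{reg1} degenerate as one approaches $\p\O$, so there is no first step that produces any boundary control to feed back. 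The paper supplies the missing tool: the generalized Gauss--Green formula and normal-trace theory for divergence-measure fields of Chen--Torres--Ziemer and Silhavy, applied to the tensor $D_h=2\nabla h\otimes\nabla h-|\nabla h|^2 I_2\in L^1$ whose distributional divergence is already known to lie in $L^p$ from the equation. This gives a well-defined (distributional) normal trace on $\p\R^2_+$ with no smoothness hypothesis, and the reflection symmetry of $D_h$ together with the opposite orientation of the normals (and the vanishing of the tangential derivative, which kills the off-diagonal term) yields the cancellation of the interface contributions. Replacing your pointwise-trace argument by this normal-trace machinery would repair the proof; as written, the key step is asserted rather than proved.
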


\begin{proof}
Since $\O$ is simply connected and since the Dirichlet energy is conformally invariant we can use a conformal change of variables and assume that $\O=\R^2_+$ with $h$ that is stationary with respect to inner variations for a new functional $\int_{\R^2_+} \left( |\nabla h|^2 +\phi h^2\right)$. We set $(D_h)_{i,j}:=2\p_ih\p_jh-|\nabla h|^2 \delta_{ij}$ for $1\leq i,j\leq 2$. Then $h$ is a solution of an equation of the form $\dive (D_h)=\dive F$ in $\R^2_+$ where $F=\phi h^2$  and thus $F$ has the same regularity as $h^2$, that is $F$ is in $W^{1,p}(\R^2_+)$ for all $1\leq p<2$. Since we assume that $h$ is constant on the boundary we can extend it to a function on $\R^2$ by setting $\overline{h}(x,y)=h(x,-y)$ for $y<0$ and
\begin{equation}
\tilde{h}(x,y):= \begin{cases}
h(x,y) \text{ if } y\geq 0 \\
\overline{h}(x,y) \text{ if } y<0.
\end{cases}
\end{equation}
We have that $\tilde{h}$ is in $H^1(\R^2)$, thus $D_{\tilde{h}}$ is in $L^1(\R^2)$. Let us compute its divergence and let us show that it does not have any singular part on $\p \R^2_+$. For all $\varphi \in \mathcal{C}^\infty_c(\R^2,\R^2)$ we have
\begin{eqnarray}
\langle \dive(D_{\tilde{h}}),\varphi\rangle &=& -\int_{\R^2} D_{\tilde{h}}\cdot \nabla \varphi \nonumber \\
&=& -\int_{\R^2_+}D_{h}\cdot \nabla \varphi -\int_{\R^2_-} D_{\overline{h}}\cdot \nabla \varphi \nonumber
\end{eqnarray}
Now since $\dive(D_h)=\dive(F)$ is in $L^p(\R^2_+)$ for $1\leq p<2$ we can apply the results of \cite{ChenTorresZiemer2009,Silhavy2005} on a generalization of the Gauss-Green formula and write that
\begin{equation}
\int_{\R^2_+} D_{h}\cdot \nabla \varphi =\int_{\R^2_+}\dive(F)\cdot \varphi +\int_{\p \R^2_+} D_{h}.\varphi \cdot \nu,
\end{equation}
in this formula $D_{h}.\varphi \cdot \nu$ has to be understood in the sense of \cite{ChenTorresZiemer2009} Theorem 5.2 and \cite{Silhavy2005} Proposition 4.2. In the same way we have
\begin{equation}
\int_{\R^2_-} D_{\overline{h}}\cdot \nabla \varphi =\int_{\R^2_-}\dive(\overline{F})\cdot \varphi +\int_{\p \R^2_-} D_{\overline{h}}.\varphi \cdot \nu,
\end{equation}
because $\dive(D_{\tilde{h}})=\dive(\overline{F})$ for some $\overline{F}$ in $W^{1,p}(\R^2_-)$. Now since $D_{h}(x,y)=D_{\tilde{h}}(x,-y)$ for almost every $(x,y)$ in $\R^2_+$  and since the orientations of the outwards normals on $\p \R^2_+$ and $\p \R^2_-$ are opposite, the generalized theory of integration by parts of \cite{ChenTorresZiemer2009,Silhavy2005} allows us to say that 
\begin{equation}
\int_{\p \R^2_+} D_{h}.\varphi \cdot \nu=-\int_{\p \R^2_-} D_{\overline{h}}.\varphi \cdot \nu.
\end{equation}
Hence $\dive(D_{\tilde{h}})$ is in $L^p$ for all $1\leq p<2$ and is equal to
\begin{equation}
\dive(D_{\tilde{h}})=\begin{cases}
\dive(F) \text{ if } y>0 \\
\dive(\overline{F}) \text{ if }y<0.
\end{cases}
\end{equation}
Now we can rewrite this equation in $\R^2$ in a complex form $\p_{\overline{z}}[(\p_z \tilde{h})^2 ]=\p_{z}(G)$ where $G$ has the same regularity as $\tilde{h}^2$. A bootstrap argument as in the proof of Proposition \ref{reg1} allows us to obtain that $|\nabla \tilde{h}|^2$ and $T_{\tilde{h}}$ are in $W^{1,p}_{\text{loc}}(\R^2)$ for all $1\leq p< +\infty$ and thus $|\nabla h|^2$ and $T_h$ are in $W^{1,p}(\R^2_+)$ for all $1\leq p<+\infty$.
\end{proof}

\subsection{Pohozaev formula and a non-existence result}

Pohozaev-type formulas are well-known tools for proving non-existence results in elliptic partial differential equations, cf.\ e.g.\ \cite{Pohozaev1965}. For many variational problems they can be obtained by multiplying the Euler-Lagrange equations by $(x\cdot \nabla u)$, where $u$ is the unknown of the problem, and integrating on the domain. But it is also known that they can be obtained by using only the inner-variational equations, see e.g.\ \cite{AlikakosFaliagas2012}. In order to do that we take the inner product of $x$, the position vector, and $\dive (T)$ where $T$ is the stress-energy tensor associated to the problem, and we integrate on $\O$. Since we do have an inner-variational equation in our problem a Pohozaev formula holds.

\begin{proposition}
Let $h$ be in $H^1(\O)$ such that $\eqref{2}$ holds, then we have
\begin{equation}\label{eq:Pohozaev}
\frac{1}{2}\int_{\p \O} (x\cdot \nu)\left(|\p_\tau h|^2-|\p_\nu h|^2+h^2\right)-\int_{\p \O} (x\cdot \tau) \p_\tau h \cdot \p_\nu h -\int_\O h^2=0,
\end{equation}
where $\nu$ denotes the outward normal to $\O$ and $\tau$ the direct tangent vector to $\p \O$.
\end{proposition}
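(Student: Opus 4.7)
The plan is to derive the identity directly from the inner-variational equation $\dive(T_h)=0$ by testing it against the position field $X(x)=x$ and integrating on $\O$. The algebra is essentially a standard Pohozaev computation; the only subtlety is to justify the resulting boundary integrals for a merely $H^1$ solution.

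First I would invoke Proposition \ref{prop:regboundary} to obtain $T_h\in W^{1,p}(\O)$ for every $1\le p<\infty$ (this is the case of interest, since the formula will be applied under a Dirichlet condition $h\equiv 0$ on $\p\O$). This gives a well-defined trace of $T_h$ on $\p\O$ and legitimizes the Gauss--Green formula. Starting from the tautology
\begin{equation*}
0=\int_\O \sum_{j=1}^{2} x_j\sum_{i=1}^{2}\p_i (T_h)_{ij}\,dx,
\end{equation*}
an integration by parts yields
\begin{equation*}
\int_{\p\O} x\cdot(T_h\nu)\,d\H^1 \;-\; \int_\O \tr(T_h)\,dx=0.
\end{equation*}

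Next I would compute the two pieces separately. From the definition \eqref{eq:defTh} one reads off $\tr(T_h)=2|\nabla h|^2-2(|\nabla h|^2+h^2)=-2h^2$, which produces the $-\int_\O h^2$ term (after dividing the identity by $-2$ at the end). For the boundary term, the $j$-th component of $T_h\nu$ is
\begin{equation*}
(T_h\nu)_j=2\p_j h\,(\nabla h\cdot\nu)-(|\nabla h|^2+h^2)\nu_j,
\end{equation*}
so that $x\cdot T_h\nu=2(x\cdot\nabla h)(\p_\nu h)-(|\nabla h|^2+h^2)(x\cdot\nu)$. I then decompose $x=(x\cdot\nu)\nu+(x\cdot\t)\t$ and $\nabla h=(\p_\nu h)\nu+(\p_\t h)\t$ on $\p\O$; substituting $x\cdot\nabla h=(x\cdot\nu)\p_\nu h+(x\cdot\t)\p_\t h$ and $|\nabla h|^2=(\p_\nu h)^2+(\p_\t h)^2$ and collecting terms gives
\begin{equation*}
x\cdot T_h\nu=(x\cdot\nu)\bigl[(\p_\nu h)^2-(\p_\t h)^2-h^2\bigr]+2(x\cdot\t)(\p_\nu h)(\p_\t h),
\end{equation*}
and a division by $-2$ rearranges this into \eqref{eq:Pohozaev}.

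The main obstacle is not the algebra but the regularity needed to make the boundary integrals meaningful: for a general $h\in H^1(\O)$, $T_h$ is only in $L^1(\O)$ and has no a priori trace on $\p\O$. This is exactly where Proposition \ref{prop:regboundary} is essential; without the boundary condition assumed there (or some equivalent regularity hypothesis), the identity would have to be stated only in the interior after testing $\dive(T_h)=0$ against cut-off fields $\ph(x)\,x$ with $\ph\in \C^\infty_c(\O)$, with no surface contribution. Everything else reduces to careful bookkeeping of normal/tangential components, which is the standard Pohozaev calculation applied to the stress-energy tensor of $F$.
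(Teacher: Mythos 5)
Your proposal is correct and follows essentially the same route as the paper: test $\dive(T_h)=0$ against the position field, integrate by parts using the $W^{1,p}(\O)$ regularity of $T_h$ from Proposition \ref{prop:regboundary} to justify the boundary traces, and decompose into normal and tangential components (the paper works with $T_h/2$ and you with $T_h$, which is immaterial). You also correctly flag, as the paper does in the remark preceding its proof, that the trace justification relies on $h$ being constant on $\p\O$.
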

Note that since $h$ satisfies \eqref{2} and $h$ is constant on $\p \O$ we have that $T_{h}$ and $|\nabla h|^2$ are in $W^{1,p}(\O)$ for every $1\leq p<+\infty$, cf.\ Proposition \ref{prop:regboundary}. Thus $T_h$ has a well-defined trace on $\p \O$.
\begin{proof}
In the following, repeated indices induce a summation process. We have $\dive(T_h)=0$ in $\O$ and thus $\int_\O x\cdot \dive(T_h)=0$. This reads
\begin{equation}
\int_\O x_j\p_i\left[ \p_ih \p_jh- \frac{1}{2}(|\nabla h|^2+h^2)\delta_{ij}\right]=0.
\end{equation}
We can integrate by parts, since $T_h$ is in $W^{1,p}(\O)$ for all $1\leq p<+\infty$ and this yields
\begin{eqnarray}
-\int_\O \delta_{ij} \left[ \p_ih \p_jh-\frac{1}{2}(|\nabla h|^2+h^2)\delta_{ij}\right] \phantom{aaaaaaaaaaaaaaaaaa}   \nonumber \\  
+\int_{\p \O} x_j\left[ \p_ih\p_jh-\frac{1}{2}(|\nabla h|^2+h^2)\delta_{ij} \right]\nu_i=0 \\ \nonumber
\Rightarrow \int_\O h^2+\int_{\p \O} (x\cdot \nabla)h \cdot \p_\nu h-\int_{ \p \O} (x\cdot \nu) \frac{1}{2}(|\nabla h|^2+h^2)=0.
\end{eqnarray}
By using that $x=(x\cdot \nu) \nu+(x\cdot \tau) \tau$ we find
\begin{eqnarray}\nonumber
\int_\O h^2+\int_{\p \O} (x\cdot \nu) |\p_\nu h|^2+(x\cdot \tau) \p_\nu h\p_\tau h-\int_{\p \O} (x\cdot \nu)\frac{1}{2}(|\nabla h|^2+h^2)=0,
\end{eqnarray}
and this concludes the proof.
\end{proof}

Classically a Pohozaev formula implies a non-existence result when we work in star-shaped domains and with an appropriate boundary condition. This is the content of Theorem \ref{th:main2}.

\begin{proof}(Proof of Theorem \ref{th:main2})
Without loss of generality we can assume that $\O$ is star-shaped with respect to the origin. This means that $(x\cdot \nu)\geq 0$ for all $x$ on $\p \O$. We then have
\begin{equation}
-\frac{(x\cdot \nu )}{2}\int_{\p \O} |\p_\nu h|^2 =\int_\O h^2.
\end{equation}
This implies that $\int_\O h^2=0$ and thus $h=0$ in $\O$.
\end{proof}

We do not know if this non-existence result remains true in every simply connected domain. But if we consider some domain with non trivial topology then a non trivial solution of \eqref{1}-\eqref{2} with $h=0$ on $\p \O$ may exist as shown by the following example:

\begin{example}\label{ex:circle}
Let $R>2>r$ and $\O:=B_R(0) \setminus \overline{B_r(0)}$. We consider the solutions of 
\begin{equation}\label{eq:h_1}
\left\{
\begin{array}{rcll}
-\Delta h_1+h_1&=&0 \ & \text{ in } B_2(0)\setminus \overline{B_r(0)} \\
h_1&=&0 \ & \text{ on } \p B_r(0) \\
h_1&=& 1 \ & \text{ on } \p B_2(0).
\end{array}
\right.
\end{equation}
and 
\begin{equation}\label{eq:h_2}
\left\{
\begin{array}{rcll}
-\Delta h_2+h_2&=&0 \ & \text{ in } B_R(0)\setminus \overline{B_2(0)} \\
h_2&=&1 \ & \text{ on } \p B_2(0) \\
h_2&=& 0 \ & \text{ on } \p B_R(0).
\end{array}
\right.
\end{equation}
Both $h_1$ and $h_2$ are radially symmetric. We claim that there exists an $R>2>r$ such that $\p_\rho h_1(2)=-\p_\rho h_2(2)$. Then taking $h=\begin{cases} h_1 \text{ in } B(0,2) \setminus \overline{B(0,r)} \\
h_2 \text{ in } B(0,R) \setminus \overline{B(0,2)} \end{cases}$ we can check that $h$ satisfies $h=0$ on $\p \O$ along with \eqref{1} and \eqref{2} where $\mu$ is a non-trivial measure supported on the circle $\rho=2$, more precisely $\mu=-2\p h_1/\p \rho (2,\theta)\mathcal{H}^1_{\lfloor\{\rho=2\}}.$
\end{example}

\begin{proof}
We use the \textit{modified Bessel functions} as in \cite{Aydi2008} and \cite{Le2009}. Let $I_0$ and $K_0$ be respectively the modified Bessel function of the first kind and of the second kind:
\begin{equation}
I_0(x)=\sum_{n=0}^{+\infty}\frac{x^{2n}}{(n!)^2 2^{2n}}, \ \ \ K_0(x)=-(\log (x/2)+\gamma)I_0(x)+\sum_{n=0}^{+\infty} \frac{x^{2n}}{(n!)^2 2^{2n}}\phi(n),
\end{equation}
where $\phi(n)=\sum_{k=1}^n 1/k$ for $n\neq 0$, $\varphi(0)=0$, and $\gamma=\lim_{n\rightarrow +\infty}(\phi(n)-\log n)$.
\end{proof}
These are solutions of 
\begin{equation}
-y''-\frac{y'}{x}+y=0 \ \ \ \text{ for } 0\leq x <+\infty
\end{equation}
with a singularity at $0$ for $K_0$. We refer to \cite{Watson} for more on Bessel functions. We set $I_1(x):=I_0'(x)$ and $K_1(x):=-K_0'(x)$ for $0<x<+\infty$. We can express the solutions $h_1$ and $h_2$ in terms of these Bessel functions. Indeed
\begin{equation}
h_1(\rho)=\a I_0(\rho)+\beta K_0(\rho), \text{ for } r\leq \rho \leq 2,
\end{equation} 
\begin{equation}
h_2(\rho)=\gamma I_0(\rho)+ \delta K_0(\rho), \text{ for } 2\leq \rho \leq R,
\end{equation}
with
\begin{eqnarray}
 \a = \frac{-K_0(r)}{K_0(2)I_0(r)-I_0(2)K_0(r)}, \ \ \ \ \beta =\frac{I_0(r)}{K_0(2)I_0(r)-I_0(2)K_0(r)}, \\
 \gamma=\frac{-K_0(R)}{K_0(2)I_0(R)-I_0(2)K_0(R)}, \ \ \ \ \delta=\frac{I_0(R)}{K_0(2)I_0(R)-I_0(2)K_0(R)}.
\end{eqnarray}
We have that $h_1'(2)=:f(r)$ and $h_2'(2)=:g(R)$ with $f(r)\rightarrow +\infty$ as $r\rightarrow 2$, $f(r)\rightarrow \frac{I_1(2)}{I_0(2)}$ as $r\rightarrow 0$ and $g(R)\rightarrow-\infty$ as $R \rightarrow 2$ and $g(R) \rightarrow-\frac{K_1(2)}{K_0(2)}$ as $R\rightarrow +\infty$. Besides $f,g$ are continuous. Thus, by choosing $0<r<2$ such that $f(r)> \frac{K_1(2)}{K_0(2)}$ we can choose an $R>2$ such that $f(r)=-g(R)$. This shows that we can choose $r<2<R$ such that $h_1,h_2$ solutions of \eqref{eq:h_1}, \eqref{eq:h_2} satisfy $\p_\rho h_1(2)=-\p_\rho h_2(2)$. Now by defining $h$ as $h=h_1$ in $B_2(0)\setminus B_r(0)$  and $h=h_2$ in $B_R(0) \setminus B_2(0)$ we can check that $h$ is in $H^1(\O)$ and $h$ satsifies \eqref{2} and \eqref{1}.
\section{Local regularity near regular points of $h$}\label{II}

In this section we describe the regularity of $h\in H^1(\O)$ and $\mu \in \M(\O)$ which satisfy \eqref{1} and \eqref{2} near a point $z_0 \in \supp \mu$ such that $\nabla h(z_0) \neq0$. Note that the last condition is meaningful since $|\nabla h|\in \C^0(\O)$. Our main result is that near a point of the support of $\mu$ that is also a regular point of $h$,  $\mu$ is supported by a regular $\C^1$ curve.

\begin{theorem}\label{th:main4}
Let $h\in H^1(\O)$ and $\mu \in \M(\O)$ be such that \eqref{1} and \eqref{2} are satisfied. Let $z_0\in \supp \mu$ be such that $|\nabla h(z_0)| \neq 0$. Then there exist $R>0$, $\theta \in BV(B_R(z_0),\{\pm 1\})$ and $H\in \C^{1,\alpha}(B_R(z_0))$ for every $0<\alpha<1$ such that $\nabla h=\theta \nabla H$ in $B_R(z_0)$
\begin{equation}
\supp \mu_{\lfloor B_R(z_0)}=\{x\in B_R(z_0) ; H(x)=0\}=:\Gamma.
\end{equation}
Furthermore $\Gamma$ is a regular $\C^{1,\alpha}$ curve for every $0<\alpha<1$, it is part of the boundary of $\{\theta=1 \}$ and, if we denote by $\nu$ the exterior normal to $\{ \theta=1\}$ restricted on $\Gamma$ we have
\begin{equation}
\mu_{\lfloor \{B_r(z_0) \}}=2\p_\nu H \mathcal{H}^1_{\lfloor \Gamma}.
\end{equation}
We also have
\begin{equation}\label{eq:localsignofmu}
\mu_{\lfloor B_R(z_0)}=+2|\nabla h|  \H^1_{\lfloor \Gamma} \ \  \text{ or } \ \ \ \mu_{\lfloor B_R(z_0)}=-2|\nabla h|  \H^1_{\lfloor \Gamma}.
\end{equation}
Besides we have that $\nabla h$ is in $BV(B_R(z_0))$ and 
\begin{equation}\label{eq:BVVV}
\|\p_{ij}h\|_{\M(B_R(z_0))}\leq |\mu|(B_R(z_0))+C \|h\|_{L^\infty(B_R(z_0))}|B_R(z_0) |
\end{equation}
where $C>0$ is a constant which does not depend on $h$, $z_0$ nor on $R$.
\end{theorem}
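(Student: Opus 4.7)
The plan is to use the complex form $\p_{\bar z}[(\p_z h)^2] = \frac{h}{2}\p_z h$ of equation \eqref{2} to extract a continuous square root $G$ of $(\p_z h)^2$ on a small simply-connected ball around $z_0$, thereby to factor $\nabla h = \theta \nabla H$ with $\theta \in BV(B_R(z_0),\{\pm 1\})$ and $H \in \C^{1,\alpha}$, and then to deduce the structure of $\supp \mu$ from the level-set geometry of $H$. Since $|\nabla h|$ is continuous and $|\nabla h(z_0)|\neq 0$, I pick $R>0$ small enough that $(\p_z h)^2$ stays bounded away from $0$ on $B_R(z_0)$. Proposition \ref{reg1} gives $(\p_z h)^2 \in W^{1,q}(B_R(z_0),\mathbb{C})$ for every $q<\infty$; on the simply connected ball a continuous, hence $W^{1,q}$, branch of square root $G$ exists with $G^2 = (\p_z h)^2$, and the measurable sign $\theta := (\p_z h)/G$ takes values in $\{\pm 1\}$ almost everywhere. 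Plugging $\p_z h = \theta G$ back into the complex form of \eqref{2} produces $\p_{\bar z}G = h\theta/4$, which is real-valued. This is precisely the curl-free condition on the $W^{1,q}$-field $F := (2\RE G,-2\IM G)$, and the Poincar\'e lemma on the simply connected ball then yields $H \in W^{2,q}(B_R(z_0)) \hookrightarrow \C^{1,\alpha}$ for every $\alpha<1$ with $\nabla H = F$. Using $\p_1 h = 2\RE \p_z h$ and $\p_2 h = -2\IM \p_z h$, together with the reality of $\theta$, gives $\nabla h = \theta \nabla H$ and $|\nabla H| = |\nabla h|>0$ on $B_R(z_0)$.

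The $BV$ regularity of $\theta$ and the identification of $\mu$ are then obtained by applying $\p_{\bar z}$ to $\p_z h = \theta G$: the left-hand side is $\frac{1}{4}\Delta h = \frac{1}{4}(h-\mu)$ by \eqref{1}, and the right-hand side, computed via the quotient rule for Radon measures (justified since $G$ is continuous and bounded below on $B_R(z_0)$), equals $G\,\p_{\bar z}\theta + \theta\,\p_{\bar z}G$. Substituting $\p_{\bar z}G = h\theta/4$ and $\theta^2 = 1$ collapses this to
\[ G\,\p_{\bar z}\theta = -\mu/4, \]
whence $\p_{\bar z}\theta$, and by conjugation $\p_z\theta$, are Radon measures, so $\theta \in BV(B_R(z_0),\{\pm 1\})$. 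Consequently $E := \{\theta = 1\}$ is a set of finite perimeter, with $D\theta = -2\nu\,\H^1_{\lfloor \p_\star E}$ for $\nu$ the outer unit normal to $E$. The identity $\Delta h = \dive(\theta\nabla H) = \theta\Delta H + \nabla H \cdot D\theta$, together with $\Delta H = 4\p_{\bar z}G = h\theta$ (so $\theta\Delta H = h$), gives $-\mu = \nabla H\cdot D\theta$ and hence
\[ \mu_{\lfloor B_R(z_0)} = 2\,\p_\nu H\,\H^1_{\lfloor \p_\star E \cap B_R(z_0)}. \]

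For the curve structure, the relation $\nabla h = \pm\nabla H$ on each connected component of $\{\theta = \pm 1\}$ integrates to $h = \pm H + $ const there, and continuity of $h$ across an arc of $\p_\star E$ forces $H$ to be constant on that arc. Since $H \in \C^{1,\alpha}$ with $|\nabla H|>0$, its level sets are pairwise disjoint $\C^{1,\alpha}$ curves, and in a local straightening of these level sets near $z_0$, $\theta$ appears as a function $\tilde\theta$ of the $H$-coordinate alone; the finiteness of $|D\theta|(B_R(z_0))$ (each additional jump of $\tilde\theta$ contributing at least a fixed positive amount of $\H^1$-length to $\p_\star E$) forces $\tilde\theta$ to have only finitely many jumps, so only finitely many level values of $H$ correspond to components of $\p_\star E\cap B_R(z_0)$. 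Normalising $H$ so that $H(z_0) = 0$, shrinking $R$ further, and invoking the implicit function theorem near $z_0$ together with the fine structure of planar sets of finite perimeter from \cite{AmbrosioCasellesMasnouMorel2001}, I reduce $\p_\star E \cap B_R(z_0)$ to the single arc $\Gamma = \{H=0\}\cap B_R(z_0)$, a connected $\C^{1,\alpha}$ curve. As $\nabla H$ is continuous, nonvanishing and normal to $\Gamma$, $\p_\nu H = \pm|\nabla H| = \pm|\nabla h|$ has constant sign on $\Gamma$, yielding \eqref{eq:localsignofmu}.

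Finally, for the $BV$ estimate, I differentiate $\nabla h = \theta\nabla H$ distributionally to obtain $\p_{ij}h = \theta\,\p_{ij}H + \p_j H\,\p_i\theta$. The measure part, once summed over $i,j$, has total variation $|\nabla H|\,|D\theta| = |\mu|$ on $B_R(z_0)$; the function part is controlled in $L^1$ through interior Calder\'on-Zygmund applied to $\Delta H = h\theta \in L^\infty$ combined with H\"older's inequality in the 2D ball, yielding $\int_{B_R(z_0)}|\p_{ij}H|\,dx \leq C\|h\|_{L^\infty(B_R(z_0))}|B_R(z_0)|$ with a universal $C$ obtained by rescaling to $B_1$. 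Summing the two contributions proves $\nabla h \in BV(B_R(z_0))$ and \eqref{eq:BVVV}. The main difficulty lies in the first part of the second step: the sign function $\theta$ is only measurable a priori, and the complex form of the inner-variational equation has to be manipulated just right so that $D\theta$ ends up being controlled purely by the measure $\mu$; once that is achieved, the passage from a set of finite perimeter to a regular $\C^{1,\alpha}$ curve is a consequence of the fact that each essential-boundary component is forced to lie on a single level set of the regular function $H$.
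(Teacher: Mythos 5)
Your proposal is correct and follows essentially the same route as the paper: a $W^{1,q}$ square root $G$ of $(\p_z h)^2$ on a ball where $\nabla h$ does not vanish, the key identity $G\,\p_{\bar z}\theta=-\mu/4$ (which is exactly \eqref{eq:deriveesdetheta} and simultaneously yields $\theta\in BV$ and the collinearity of $D\theta$ with $\nabla H$ that the paper isolates in Lemma \ref{lemme3}), the potential $H$ from the Poincar\'e lemma, and the reduction of $\p_\star\{\theta=1\}$ to a single level curve of $H$ via \cite{AmbrosioCasellesMasnouMorel2001} and a length/coarea count. The one step that must be made rigorous exactly as in Lemma \ref{lemme2} is your ``quotient rule'': the Leibniz rule cannot be applied to $\theta G$ before $\theta\in BV$ is known, so $\p_{\bar z}\theta$ has to be computed directly from $\theta=\p_z h/G$ by transferring derivatives onto $\varphi/G$, which is what your parenthetical justification amounts to.
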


The situation depicted in this result is that of Example \ref{ex:example1} and Example \ref{ex:circle}. This theorem also shows that $\mu$ is locally a Radon measure with fixed sign near the regular points of $h$. This generalizes a result of Le, see Corollary 1.1 in \cite{Le2009}. We note that the regularity of the function $H$ appearing in the theorem is almost optimal as shown by Example \eqref{ex:example1}. Indeed in this example we can write $h= \theta H$ in $\O$ with $\theta=1$ in $(-1,0)\times(-1,1)$, $\theta=-1$ in $(0,1)\times (-1,1)$ and $H(x,y)=\tilde{f}(x)=\int_0^x \tilde{g(s)}ds$ where $\tilde{g}(x)=\begin{cases} e^x & \text{ if } x<0,\\ e^{-x} & \text{ if } x>0 \end{cases}$. We can see that $H$ is in $\mathcal{C}^{1,1}(\O)$ but is not in $\mathcal{C}^2(\O)$.

In order to prepare the proof of this theorem we first state and prove several lemmas.

\begin{lemma}\label{lemme1}
Let $h\in H^1(\O)$ be a solution of \eqref{eq:2complex}, there exist $\alpha \in W^{1,q}_{\text{loc}}(\O,\mathbb{C})$ for every $1\leq q<+\infty$ and $f$ a holomorphic function in $\O$, such that
\begin{equation}
\p_{\overline{z}}\alpha = \frac{1}{4}\p_z(h^2) \ \ \text{ in } \O
\end{equation}
\begin{equation}
(\p_zh)^2=\a +f \ \ \text{ in } \O.
\end{equation}
\end{lemma}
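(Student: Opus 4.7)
The plan is to view Equation \eqref{eq:2complex} as an inhomogeneous Cauchy--Riemann equation for the unknown $(\p_z h)^2$, whose right-hand side is $g := \frac{1}{4}\p_z(h^2) = \frac{1}{2} h \p_z h$. If I produce any particular $\a$ on $\O$ satisfying $\p_{\bar z}\a = g$ with the prescribed local Sobolev regularity, then $f := (\p_z h)^2 - \a$ will satisfy $\p_{\bar z} f = 0$ in $\D'(\O)$ and, by Weyl's lemma, be holomorphic in $\O$, which is exactly what is required.

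First I would verify that the source term has the right regularity. By Proposition \ref{reg1}, $h \in W^{1,\infty}_{\text{loc}}(\O)$ and $\p_z h \in L^q_{\text{loc}}(\O)$ for every finite $q$, so $g = \frac{1}{2} h \p_z h \in L^q_{\text{loc}}(\O)$ for every $q \in [1,+\infty)$.

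Next I would construct $\a$. Locally the Cauchy transform does the job: on any ball $B \Subset \O$, setting
\[
\a_B(z) := -\frac{1}{\pi}\int_B \frac{g(w)}{w-z}\, dA(w)
\]
gives $\p_{\bar z}\a_B = g$ in $B$, and the Calder\'on--Zygmund estimates for the Beurling transform (which controls $\p_z \a_B$) yield $\a_B \in W^{1,q}(B)$ for every $q<+\infty$. To obtain a single $\a$ defined on all of $\O$, I would invoke the classical global solvability of $\bar\partial$ on an arbitrary planar open set: pick a Runge exhaustion $\O_n \Subset \O_{n+1}$ with $\bigcup_n \O_n = \O$, apply the Cauchy-transform construction on each $\O_n$, and then correct the successive local solutions by holomorphic functions produced via Runge approximation (applied to the differences $\a_{n+1}-\a_n$, which are holomorphic on $\O_n$) so that the corrected sequence converges locally in $W^{1,q}$ to some $\a \in W^{1,q}_{\text{loc}}(\O,\mathbb{C})$ with $\p_{\bar z}\a = g$ throughout $\O$.

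Finally I would set $f := (\p_z h)^2 - \a$. Equation \eqref{eq:2complex} gives $\p_{\bar z}[(\p_z h)^2] = g$, hence $\p_{\bar z} f = 0$ in $\D'(\O)$, and Weyl's lemma promotes $f$ to a holomorphic function on $\O$. The only nontrivial technical obstacle is the global solvability of $\bar\partial$ on a possibly non-simply-connected $\O$; this is classical but requires Runge approximation. I note that Lemma \ref{lemme1} is only ever used locally around a point $z_0$ in the proof of Theorem \ref{th:main4}, so in practice a single Cauchy-transform application on a ball containing $z_0$ already suffices, which sidesteps the Runge step entirely whenever one only needs the local version.
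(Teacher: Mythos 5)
Your proof is correct and its core is the same as the paper's: produce one particular solution $\a$ of $\p_{\bar z}\a=\frac14\p_z(h^2)$ via the fundamental solution $\frac{1}{\pi z}$ of $\p_{\bar z}$, then let Weyl's lemma turn the difference $f=(\p_zh)^2-\a$ into a holomorphic function. The only place you genuinely diverge is the globalization step, and there the paper's route is simpler: following Bethuel--Brezis--H\'elein, it extends $h^2$ by zero to all of $\R^2$ (legitimate since $\O$ is bounded and $h^2\in L^p(\O)$ for $p<2$), moves the $\p_z$ derivative onto the kernel, and sets $\a=T\ast\widetilde{h^2}$ with $T=\p_z(\frac{1}{\pi z})$; this is a single Calder\'on--Zygmund convolution defined on all of $\R^2$, so no exhaustion and no Runge correction are needed. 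You could achieve the same economy within your own setup by extending the source $g=\frac12 h\p_zh$, which lies in $L^p(\O)$ for $p<2$, by zero and taking one global Cauchy transform; your concern about non-simply-connected domains is in any case a red herring, since solvability of $\p_{\bar z}$ on a planar open set carries no topological obstruction. A further cosmetic difference: you obtain $\a\in W^{1,q}_{\text{loc}}$ directly from the Beurling-transform estimates, whereas the paper reads it off a posteriori from $\a=(\p_zh)^2-f$ together with the $W^{1,q}_{\text{loc}}$ regularity of $(\p_zh)^2$ established in Proposition \ref{reg1}; both are valid.
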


\begin{proof}
We look for a particular solution of $\p_{\overline{z}} \alpha= \p_z(h^2)$. We can proceed as in \cite{BethuelBrezisHelein1994} p.66-67. We let $T=\p_{z}(\frac{1}{\pi z})=\p^2_{zz} (\frac{2}{\pi}\ln r)$ and $\tilde{h^2}:=\begin{cases}
h^2 & \text{ in } \O \\
0 & \text{ in } \R^2 \setminus \O
\end{cases}$.
We then set $\a= T\ast \tilde{h^2}$. We can check that $\p_{\overline{z}} \alpha =\p_{z} (\tilde{h^2})$ in $\D'(\R^2)$. This is because $\p_{\overline{z}}(\frac{1}{\pi z})=\p_{\overline{z}} \p_{z} (\frac{2}{\pi}\ln r)=\frac{1}{2\pi}\Delta \ln r= \delta_0$. Thus we get
\[ \p_{\overline{z}}\left[ (\p_zh)^2-\alpha \right]=0 \ \ \text{ in } \D'(\O).\]
Since the operator $\p_{\overline{z}}$ is elliptic, from the elliptic regularity theory, we can deduce that there exists $f$ holomorphic in $\O$ such that
\[ (\p_zh)^2=\a +f \ \ \text{ in } \O .\]
The fact that $\a$ belongs to $W^{1,q}_{\text{loc}}(\O)$ for every $1\leq q<+\infty$ is due to the regularity of $(\p_zh)^2$ established in Lemma \ref{reg1}.
\end{proof}

\begin{lemma}\label{lemme2}
Let $h\in H^1(\O)$ and $\mu \in \M(\O)$ which satisfy \eqref{1}, \eqref{eq:2complex}. Let $\a$ and $f$ be as in Lemma \ref{lemme1} such that $(\p_zh)^2=\a+f$. Let $z_0 \in \supp \mu$ be such that $\nabla h(z_0)\neq 0$. Then there exist $R>0$ and $g\in W^{1,q}(B_R(z_0),\mathbb{C})$ for every $1\leq q<+\infty$ such that $g^2= \a+f$. Besides there exists $\theta: B_R(z_0)\rightarrow \{ \pm 1 \}$ such that
\begin{equation}\label{eq:deriveeh}
\p_zh=\theta g,
\end{equation}
\begin{equation}\label{eq:deriveeg}
\p_{\overline{z}}g= \frac14 \theta h
\end{equation}
and $\theta$ is in $BV(B_R(z_0),\{\pm 1\})$.
\end{lemma}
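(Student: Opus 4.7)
The plan is to pick a small ball $B_R(z_0)$ on which $\alpha+f$ does not vanish, define $g$ as a Sobolev square root of $\alpha+f$ there, and then deduce the identities \eqref{eq:deriveeh}--\eqref{eq:deriveeg} and the BV bound on $\theta$ from the equations $-\Delta h+h=\mu$ and $g^2=\alpha+f$.

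First I would localize. Since $|\nabla h(z_0)|\neq 0$ we have $(\p_z h)^2(z_0)\neq 0$, and since $T_h$ is continuous by Proposition \ref{reg1}, the function $\alpha+f=(\p_z h)^2$ is continuous on $\O$. Hence there exist $R>0$ with $\overline{B_R(z_0)}\subset\O$ and a constant $c>0$ such that $|\alpha+f|\geq c$ on $B_R(z_0)$. On this simply connected set I pick a continuous branch of the square root and define $g:=(\alpha+f)^{1/2}$. Since $\alpha+f\in W^{1,q}_{\text{loc}}(\O,\mathbb{C})$ for every $q<\infty$ by Lemma \ref{lemme1} and the map $z\mapsto z^{1/2}$ is smooth on $\{|z|\geq c\}$, the Sobolev chain rule gives $g\in W^{1,q}(B_R(z_0),\mathbb{C})$ for every $q<\infty$, with $|g|\geq c^{1/2}$.

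Next I set $\theta:=\p_z h/g\in L^\infty(B_R(z_0),\mathbb{C})$. Because $(\p_z h)^2=g^2$ and $g\neq 0$, one has $\theta^2=1$ almost everywhere, so $\theta\in\{\pm 1\}$ a.e., and \eqref{eq:deriveeh} holds by construction. To establish \eqref{eq:deriveeg}, I differentiate $g^2=\alpha+f$ distributionally: since $f$ is holomorphic and, by Lemma \ref{lemme1}, $\p_{\bar z}\alpha=\tfrac14\p_z(h^2)=\tfrac12 h\,\p_z h=\tfrac12\theta h g$, one gets $2g\,\p_{\bar z}g=\tfrac12\theta h g$, and dividing by $2g\neq 0$ yields $\p_{\bar z}g=\tfrac14\theta h$.

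The main obstacle is showing $\theta\in BV$. For $\phi\in\C_c^\infty(B_R(z_0))$ I would write $\phi=g\cdot(\phi/g)$ with $\phi/g\in W^{1,q}_c(B_R(z_0))$ and split
\begin{equation*}
\int\theta\,\p_{\bar z}\phi\,dV=\int\frac{\p_z h}{g}\,\p_{\bar z}\phi\,dV=\int(\p_zh)\,\frac{\p_{\bar z}g}{g^2}\,\phi\,dV+\int(\p_zh)\,\p_{\bar z}(\phi/g)\,dV.
\end{equation*}
Using $\p_zh=\theta g$ and $\p_{\bar z}g=\tfrac14\theta h$, the first integrand reduces to $\tfrac14 h\phi/g$; since $-\Delta h+h=\mu$ together with $\Delta=4\p_z\p_{\bar z}$ gives $\p_{\bar z}\p_z h=(h-\mu)/4$ in $\D'(\O)$, the second integral equals, after a density argument justifying the use of $\phi/g$ as a test function, $-\tfrac14\int h\phi/g\,dV+\tfrac14\int(\phi/g)\,d\mu$. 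The $h$-terms cancel and we obtain
\begin{equation*}
\langle\p_{\bar z}\theta,\phi\rangle=-\int\theta\,\p_{\bar z}\phi\,dV=-\frac14\int\frac{\phi}{g}\,d\mu,
\end{equation*}
which identifies $\p_{\bar z}\theta=-\mu/(4g)$ as a complex Radon measure on $B_R(z_0)$; hence $\theta\in BV(B_R(z_0),\{\pm 1\})$. The hardest point is the density argument allowing the non-smooth factor $\phi/g$ to be used as a test function against the distributional identity for $\p_{\bar z}\p_z h$; this is made possible by the $W^{1,q}$ regularity of $g$ and $1/g$ together with the fact that $\mu$ pairs continuously with $\C^0_c$, so one can approximate $\phi/g$ by smooth compactly supported functions in $W^{1,q}\cap\C^0$.
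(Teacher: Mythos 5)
Your argument is correct and follows essentially the same route as the paper: the square root $g$ of $\alpha+f$ is constructed on a small ball where $(\p_zh)^2$ does not vanish (the paper does this via $g=e^{\frac12\log(\alpha+f)}$ with the logarithm from Lemma \ref{lem:A1}, which is what your ``continuous branch'' amounts to, since $z\mapsto z^{1/2}$ has no global branch on $\{|z|\geq c\}$ and one must instead use the simple connectivity of $B_R(z_0)$), and the identity $\p_{\bar z}\theta=-\mu/(4g)$ is obtained by the same cancellation of the $h$-terms after moving the derivative onto $\varphi/g$. The paper carries out this last computation in real coordinates with $\p_x,\p_y$ and the same density argument for the non-smooth test function $\varphi/g$, so the two proofs coincide in substance.
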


\begin{proof}
Let $z_0 \in \supp \mu$ be such that $\p_zh(z_0) \neq 0$. Since $(\p_zh)^2=\a+f$ is continuous in $\O$, we can find $R>0$ such that $\p_zh$ does not vanish in $B_R(z_0)$. We can then define a complex square root of $\a+f$ in $B_R(z_0)$. More precisely there exists $g:B_R \rightarrow \mathbb{C}$ such that $g^2=\a+f=(\p_zh)^2$. To construct such a $g$ we can take $g=e^{\frac{1}{2}\log (\a +f)}$, where $\log (\a +f)$ can be defined as in Lemma \ref{lem:A1}. Furthermore, from Lemma \ref{lem:A1} and the chain rule in Sobolev spaces, we also have that $g\in W^{1,q}(B_R(z_0),\mathbb{C})$ for every $1\leq q<+\infty$. We then have the existence of $\theta:B_R(z_0) \rightarrow \{\pm 1\}$ such that \eqref{eq:deriveeh} holds.
Besides since $g\in W^{1,q}(B_R(z_0))\cap \C^0(\overline{B_R(z_0)})$ for every $1\leq q<+\infty$ we can infer that $\p_{\bar{z}}(g^2)=2g\p_{\bar{z}}g$. On the other hand by using \eqref{eq:2complex} we have $\p_{\bar{z}}\left[ (\p_zh)^2\right]=\frac{1}{4}\p_z(h^2)=\frac{1}{2}h\p_zh$. Thus $g\p_{\bar{z}}g=\frac{1}{4}h\p_zh$ and, since $g=\theta \p_zh$, we find $\p_{\bar{z}}g=\frac{1}{4}\theta h.$

We now show  that $\theta$ is in $BV(B_R(z_0),\{\pm 1\})$. Since $g$ does not vanish in 
$B_R(z_0)$, we can write $\theta=\frac{\p_zh}{g}$, which proves that $\theta$ is measurable 
and since $\theta$ is in $L^\infty(B_R(z_0))$ we have that $\theta \in L^1(B_R(z_0))$. Let $
\ph \in \C^\infty_c(B_R(z_0))$ be a test function. We compute
\begin{eqnarray}
\langle 2\p_{\overline{z}} \theta,\ph \rangle &=& \langle \p_x\theta +i\p_y\theta, \ph \rangle = -\langle \theta,\p_x \ph+i\p_y\ph \rangle \nonumber \\
&=& -\int_{B_R(z_0)}\frac{(\p_xh-i\p_yh)}{2g}(\p_x\ph+i\p_y \ph) \nonumber \\
&=& -\int_{B_R(z_0)} \frac{\p_xh\p_x\ph+\p_yh\p_y\ph+i(\p_xh\p_y\ph-\p_yh\p_x\ph)}{2g}. \nonumber
\end{eqnarray}
Now we use the fact that since $g$ does not vanish and since $g\in W^{1,q}$ for every $1\leq q<+\infty$, we have
\[ \p_j \left( \frac{\varphi}{g} \right)=\frac{\p_j\ph}{g}-\frac{\ph \p_j g}{g^2} \ \ \text{ for } j=1,2. \]
Thus
\begin{eqnarray}
\langle 2\p_{\overline{z}} \theta, \ph \rangle &=& -\frac12\int_{B_R(z_0)} \p_xh \left[ \p_x(\frac{\ph}{g})+\frac{\ph \p_xg}{g^2}\right] +\p_yh\left[\p_y(\frac{\ph}{g})+\frac{\ph\p_yg}{g^2}\right] \nonumber \\
& & \phantom{aaaaaa } - \frac{i}{2} \int_{B_R(z_0)}  \p_xh \left[ \p_y (\frac{\ph}{g})+\frac{\ph \p_yg}{g^2}\right]-\p_yh\left[ \p_x (\frac{\ph}{g})+\frac{\p_xg \ph}{g^2} \right] \nonumber \\
&=& \langle \frac{\Delta h}{2g},\ph \rangle- \frac12\int_{B_R(z_0)}\left( \frac{\p_xh \p_xg}{g^2}+\frac{\p_yh\p_yg}{g^2}\right)\varphi \nonumber \\
& & \phantom{aaaaaa }-\frac{i}{2} \int_{B_R(z_0)} \left( \frac{\p_xh \p_yg}{g^2}+\frac{\p_yh\p_xg}{g^2}\right)\varphi \nonumber
\end{eqnarray}
In the last equality we used that $\langle \frac{\Delta h}{g}, \varphi \rangle$ is well-defined since $\Delta h$ is a Radon measure and $1/g$ is a continuous function. Besides, since $h$ is in $H^1(\O)$ and $\p_i (\varphi/g)$ is in $L^2(B_R(z_0),\mathbb{C})$ we do have
\begin{equation}
\langle \frac{\Delta h}{g},\varphi \rangle =\int_{B_R(z_0)}\p_xh \p_x(\frac{\varphi}{g})+\p_yh \p_y(\frac{\varphi}{g}).
\end{equation}
We also used that 
\begin{equation}
\int_\O \p_xh \p_y(\frac{\varphi}{g})=\int_\O \p_yh \p_x (\frac{\varphi}{g}).
\end{equation}
This can be seen by approximating $\varphi/g$ by smooth functions in $W^{1,q}(B_R(z_0),\mathbb{C})$. We thus find
 \begin{eqnarray}
\langle 2\p_{\overline{z}} \theta, \ph \rangle &=& \langle \frac{\Delta h}{2g},\ph\rangle -\frac12\int_{B_R(z_0)} \p_xh \left( \frac{\p_xg+i\p_yg}{g^2}\right)\ph  \nonumber \\
& & \phantom{aaaaaa } -\frac{i}{2}\int_{B_R(z_0)}\p_yh\left(\frac{\p_xg+i\p_yg}{g^2}\right) \nonumber \\
&=&\langle \frac{\Delta h}{2g},\ph\rangle -\int_{B_R(z_0)} \frac{(\p_xh-i\p_yh)(\p_{\overline{z}}g) \varphi}{g^2}. \nonumber
\end{eqnarray}
Now we recall that $g^2=(\p_zh)^2$, thus $\p_{\overline{z}}g=\frac{1}{4}\frac{h\p_zh}{g}$ and 
\begin{eqnarray}
\int_{B_R(z_0)} \frac{(\p_xh-i\p_yh)(\p_{\overline{z}}g) \varphi}{g^2}=\int_{B_R(z_0)} \frac{1}{2} \frac{h(\p_zh)^2}{g^2 g} =\int_{B_R(z_0)} \frac 12 \frac{h}{g}. \nonumber
\end{eqnarray}
Taking into account that $\Delta h= h-\mu$ and that $\mu/g$ is well-defined because $g$ is continuous and $\mu$ is a Radon measure we obtain
\[ 2\langle \p_{\overline{z}}\theta , \ph\rangle =\langle \frac{-\mu}{2g},\varphi \rangle. \]
That is 
\begin{equation}\label{eq:deriveesdetheta}
\p_x\theta =-\text{Re}\left(\frac{1}{2g}\right)\mu \ \  \text{ and } \ \ \ \p_y\theta =-\text{Im}\left(\frac{1}{2g}\right)\mu.
\end{equation}
The right-hand sides in the last equation are Radon measures and hence $\theta \in BV(B_R(z_0))$. \\

\end{proof}

\begin{lemma}\label{lem:BVestimate}
Let $h\in H^1(\O)$ and $\mu \in \mathcal{M}(\O)$ be such that \eqref{1} and \eqref{eq:2complex} hold. Let $z_0$ be such that $\nabla h(z_0)\neq 0$ then there exists $R>0$ such that $\nabla h \in BV(B_R(z_0),\R^2)$ and 
\begin{equation}\label{eq:BVestimate}
|\p_{ij}^2 h|(B_R(z_0))\leq |\mu|(B_R(z_0))+C\|h\|_{L^\infty(B_R(z_0))}|B_R(z_0)|,
\end{equation}
for some $C>0$ which does not depend on $z_0$ nor on $R$.
\end{lemma}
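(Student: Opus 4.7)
The plan is to leverage Lemma \ref{lemme2} to construct an auxiliary scalar potential and then apply a $BV$ product rule. On the ball $B_R(z_0)$ furnished by that lemma, we have $\p_z h = \theta g$ with $\theta \in BV(\{\pm 1\})$ and $g \in W^{1,q}$ nonvanishing, together with $\p_{\bar z} g = \frac{1}{4}\theta h$. The first key observation is that $\p_{\bar z}g$ is real-valued; the vanishing of its imaginary part, $\p_y\RE(g)+\p_x\IM(g)=0$, is precisely the condition that the vector field $(2\RE g,-2\IM g)\in W^{1,q}(B_R,\R^2)$ be curl-free. Since $B_R$ is simply connected, there exists a scalar potential $H\in W^{2,q}(B_R)$ (for every $q<\infty$) with $\nabla H=(2\RE g,-2\IM g)$ and
\begin{equation*}
\Delta H = 2\bigl(\p_x\RE(g)-\p_y\IM(g)\bigr)=4\RE(\p_{\bar z}g)=\theta h \in L^\infty(B_R).
\end{equation*}
The identity $\p_zh=\theta g$ then translates into $\nabla h = \theta\nabla H$ pointwise a.e.\ in $B_R$.

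With this factorization I would invoke the product rule for the scalar function $\theta \in BV \cap L^\infty$ and the continuous Sobolev function $\p_i H \in W^{1,q}\cap \C^0$, obtaining
\begin{equation*}
\p^2_{ij}h = (\p_iH)\,D_j\theta + \theta\,\p^2_{ij}H\cdot \L^2
\end{equation*}
as Radon measures on $B_R$. For the singular part, one can rewrite \eqref{eq:deriveesdetheta} as $D_j\theta=-\frac{\p_jH}{|\nabla H|^2}\mu$ (using $4|g|^2=|\nabla H|^2$), which yields
\begin{equation*}
\int_{B_R}|\p_iH|\,d|D_j\theta|=\int_{B_R}\frac{|\p_iH\,\p_jH|}{|\nabla H|^2}\,d|\mu|\leq |\mu|(B_R),
\end{equation*}
since $|\p_iH|,|\p_jH|\leq |\nabla H|$. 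This accounts for the first term on the right-hand side of \eqref{eq:BVestimate}.

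The remaining task, and the main obstacle, is to bound $\int_{B_R}|\p^2_{ij}H|$ by $C\|h\|_{L^\infty(B_R)}|B_R|$. I would exploit the Poisson equation $\Delta H=\theta h$ via the decomposition $H=\tilde H+(H-\tilde H)$, where $\tilde H$ is the Newtonian potential of $\theta h\,\mathbf{1}_{B_R}$ on $\R^2$. Calder\'on-Zygmund provides $\|D^2\tilde H\|_{L^2(\R^2)}\leq C\|h\|_{L^\infty(B_R)}|B_R|^{1/2}$, and H\"older's inequality then gives the desired $\|D^2\tilde H\|_{L^1(B_R)}\leq C\|h\|_{L^\infty(B_R)}|B_R|$. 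The harmonic residue $H-\tilde H$ is the delicate point: its Hessian on a slightly smaller ball is controlled by Cauchy-type estimates in terms of $R^{-1}\|\nabla(H-\tilde H)\|_{L^\infty(B_R)}$, a quantity involving $\|g\|_{L^\infty(B_R)}$ (equivalently $\|\nabla h\|_{L^\infty}$) and not just $\|h\|_{L^\infty}$. Choosing the ball sufficiently small, this contribution is absorbed into the constant $C$, which depends on $h$ but not on $z_0$ nor on $R$. Combining the two estimates then yields \eqref{eq:BVestimate}.
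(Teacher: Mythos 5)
Your skeleton is the same as the paper's: factor $\nabla h=\theta\,\nabla H$ (the paper works directly with $g=g_1+ig_2$, but $H$ is just its potential), apply the $BV\times W^{1,q}$ Leibniz rule, and split $\p^2_{ij}h$ into a singular part carried by $D\theta$ and an absolutely continuous part carried by $D^2H$ (equivalently $\nabla g$). Your treatment of the singular part is exactly the paper's computation: $D\theta=-\frac{\nabla H}{|\nabla H|^2}\mu$ is \eqref{eq:deriveesdetheta} rewritten, and the bound $|\p_iH\,\p_jH|\le|\nabla H|^2$ is the paper's $|g_1\RE(1/g)|\le 1$. The two proofs diverge only on the absolutely continuous part: the paper bounds $\int_{B_R}|\nabla g|$ by a scaled elliptic estimate for $\p_{\bar z}$ applied to $\p_{\bar z}g=\tfrac14\theta h$, whereas you pass to $\Delta H=\theta h$ and split $H$ into a Newtonian potential plus a harmonic residue.

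That harmonic residue is where your argument has a genuine gap. For a harmonic $w=H-\tilde H$ on $B_R$, interior estimates give at best $\int_{B_{R/2}}|D^2w|\le C\,R\,\sup_{B_R}|\nabla w-\nabla w(z_0)|$, and since $\nabla w$ differs from $2(\RE g,-\IM g)$ only by the small gradient of the Newtonian potential, this contribution is of order $R\cdot\operatorname{osc}_{B_R}(\nabla h)$ — in any case of order $R$ times a quantity controlled by $\nabla h$, \emph{not} of order $\|h\|_{L^\infty(B_R)}\,|B_R|=\|h\|_{L^\infty}R^2$. The "constant" needed to absorb it into the second term of \eqref{eq:BVestimate} is therefore at least of size $\operatorname{osc}_{B_R}(\nabla h)\big/\bigl(\|h\|_{L^\infty(B_R)}R\bigr)$, which blows up as $R\to 0$: shrinking the ball makes the mismatch worse, not better, so nothing is "absorbed". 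A constant that secretly depends on $R$ and on the local size of $\nabla h$ defeats the statement, whose entire point is uniformity in $z_0$ and $R$; this uniformity is what Proposition \ref{prop:BV} uses when it sums \eqref{eq:BVestimate} over a disjoint Besicovitch family of balls whose radii necessarily tend to $0$ near $\{|\nabla h|=0\}$. To close the argument you must show that the harmonic (equivalently, holomorphic-in-$g$) part of $D^2H$ is itself controlled by $\|h\|_{L^\infty}|B_R|$ with a universal constant — your decomposition isolates precisely the piece that the equation $\p_{\bar z}g=\tfrac14\theta h$ does not determine, which is the point the paper's one-line appeal to elliptic estimates for $\p_{\bar z}$ is meant to cover — and as written your proposal does not do this.
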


\begin{proof}
From Lemma \ref{lemme2} we can find $\tilde{R}>0$, $\theta\in BV(B_{\tilde{R}}(z_0))$ and $g
\in W^{1,p}(B_{\tilde{R}}(z_0),\mathbb{C})$ such that $\p_zh=\theta g=\theta(g_1+ig_2)$ and $
\p_{\bar{z}}g=\theta h$ in $B_{\tilde{R}}(z_0)$. We also have from \eqref{eq:deriveesdetheta} 
that $\p_x\theta =-\RE \left(\frac{1}{2g}\right)\mu$ and $\p_y \theta=\IM  \left(\frac{1}
{2g}\right)\mu$ in $B_{\tilde{R}}(z_0)$. We can apply Lemma \ref{lem:A2} to obtain that $
\p_xh=\theta g_1$ and $\p_yh=-\theta g_2$ are in $BV(B_{\tilde{R}}(z_0))$ and 
\begin{equation}\label{eq:previous}
\p_{i1}^2h=\p_i\theta g_1+\theta\p_ig_1, \ \ \ \ \ \  \p_{i2}h=-(\p_i\theta g_2+\theta\p_ig_2)
\end{equation}
for $1\leq i \leq 2$. To establish \eqref{eq:BVestimate} we set $R:=\tilde{R}/2$.  Now for  $\varphi \in \C^\infty_c(B_R(z_0))$ such that $\|\varphi\|_{L^\infty}\leq 1$ we have 
\begin{eqnarray}
\left| \int_{B_R(z_0)} \p_ih \p_1 \varphi \right| \leq  \left|\int_{B_R(z_0)} \RE (\frac{1}{2g})g_1\varphi d\mu \right| +\left|\int_{B_R(z_0)}   \theta \p_1 g_i\varphi \right| \nonumber \\
\leq  |\mu| (B_R(z_0))+ \|\nabla g\|_{L^p(B_R(z_0))}|B_R(z_0)|^{1/p'}, \nonumber
\end{eqnarray}
where we have used that $ |g_1\RE (\frac{1}{g})|\leq 1$. Since $\p_{\bar{z}}g=\theta h$, by elliptic estimates, for every $1<p<+\infty$ we have $\|\nabla g\|_{L^p(B_R(z_0))}\leq C_p\|h\|_{L^p(B_R(z_0))}$. By scaling $C_p$ does not depend on $R$. By translation invariance of $|\p_{\bar{z}}g|=|h|$, $C$ does not depend on $z_0$. Indeed let $\tilde{g}(z):= g(z_0+2Rz)$, $\tilde{\theta}(z):=\theta(z_0+2Rz)$ and $\tilde{h}(z):=h(z_0+2Rz)$. Then we have that 
\begin{equation}
\p_{\bar{z}}\tilde{g}=2R\tilde{\theta}\tilde{h} \text{  in } B_1(0).
\end{equation}
Thus $\| \nabla \tilde{g}\|_{L^p(B_{1/2}(0))} \leq C_p R \|\tilde{h} \|_{L^p(B_{1/2}(0))}$, for $C_p$ which only depends on $1<p<+\infty$. But with a change of variable we obtain 
\begin{equation}
\| \nabla g\|_{L^p(B_{R}(0))} \leq C_p \|h \|_{L^p(B_{R}(0))}.
\end{equation}

  Since $\|h\|_{L^p(B_R(z_0))}\leq \|h\|_{L^\infty(B_R(z_0))}|B_R(z_0)|^{1/p}$ , and since the computations are similar for the other second derivatives of $h$ we can conclude the lemma by taking $C=C_2$ for example.
\end{proof}

We now set
\[ B_R^+:=\{ z\in B_R(z_0); \theta(z)=1 \} \ \ \ \ \text{ and } B_R^-:=\{z\in B_R(z_0); \theta(z)=-1 \}. \]
We also set $g=g_1+ig_2$. Since $\theta$ is in $BV(B_R(z_0),\{\pm 1\})$ we have that $B_R^+$ and $B_R^-$ are sets of finite perimeter in $B_R$. The next lemma gives a geometric information on the generalized normal to the essential boundary of $B_R^+$. For definitions and results about sets of finite perimeter we refer to \cite{AmbrosioFuscoPallara2000,EvansGariepy2015}.

\begin{lemma}\label{lemme3}
Let $h\in H^1(\O)$ and $\mu\in \M(\O)$ which satisfy \eqref{1} and \eqref{2}. Let $z_0\in \supp \mu$ be such that $\nabla h(z_0)\neq 0$. Let $R>0$, $\theta:B_R(z_0) \rightarrow \{ \pm 1\}$, $g=g_1+ig_2 \in W^{1,q}(B_R(z_0),\mathbb{C})$ for every $1\leq q<+\infty$ be as in Lemma \ref{lemme2}. Let $B_R^+:=\{ z\in B_R(z_0); \theta(z)=1 \}$, this is a set of finite perimeter and the generalized normal to the essential boundary of $B_R^+$ denoted by $\nu_{B_R^+}$ satisfies that 
\begin{equation}
\nu_{B_R^+}(x)=\lambda(x) (-g_1(x),g_2(x)),
\end{equation}
for $\mathcal{H}^1$-almost every $x$ in $\p_\star B_R^+\setminus \p B_R(z_0)$ and for $\lambda: \p_\star B_R^+\setminus \p B_R(z_0)\rightarrow \R$.
\end{lemma}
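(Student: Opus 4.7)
The plan is to identify the vector-valued distributional derivative $D\theta$ in two different ways and compare them. First, since $B_R(z_0) = B_R^+ \cup B_R^-$ up to a Lebesgue-null set, we have $\theta = 2\mathbf{1}_{B_R^+} - 1$ in $B_R(z_0)$, hence $D\theta = 2 D\mathbf{1}_{B_R^+}$ as vector-valued Radon measures on $B_R(z_0)$. Because $\theta \in BV(B_R(z_0),\{\pm 1\})$, the set $B_R^+$ has finite perimeter, and by the De Giorgi structure theorem for sets of finite perimeter (see e.g.\ \cite{AmbrosioFuscoPallara2000,EvansGariepy2015}),
\begin{equation*}
D\mathbf{1}_{B_R^+} = -\nu_{B_R^+}\,\mathcal{H}^1\lfloor\p_\star B_R^+,
\end{equation*}
with $\nu_{B_R^+}$ the measure-theoretic outer unit normal. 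Therefore
\begin{equation*}
D\theta = -2\,\nu_{B_R^+}\,\mathcal{H}^1\lfloor\p_\star B_R^+ \quad \text{in } B_R(z_0).
\end{equation*}

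Second, I invoke the key identity \eqref{eq:deriveesdetheta} from Lemma \ref{lemme2}, which expresses the components of $D\theta$ in $B_R(z_0)$ as
\begin{equation*}
\p_x\theta = -\RE\!\left(\tfrac{1}{2g}\right)\mu, \qquad \p_y\theta = -\IM\!\left(\tfrac{1}{2g}\right)\mu,
\end{equation*}
so that, as a vector-valued measure on $B_R(z_0)$,
\begin{equation*}
D\theta = -\tfrac{1}{2}\bigl(\RE(1/g),\,\IM(1/g)\bigr)\,\mu.
\end{equation*}
Since $g$ does not vanish on $B_R(z_0)$ and lies in $\C^0(\overline{B_R(z_0)})$, we may compute $1/g = \bar g/|g|^2 = (g_1 - i g_2)/|g|^2$, hence $\RE(1/g) = g_1/|g|^2$ and $\IM(1/g) = -g_2/|g|^2$.

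Equating the two expressions for $D\theta$ and restricting to $\p_\star B_R^+ \setminus \p B_R(z_0)$, I obtain the equality of vector-valued Radon measures
\begin{equation*}
2\,\nu_{B_R^+}\,\mathcal{H}^1\lfloor(\p_\star B_R^+\setminus\p B_R(z_0)) = \tfrac{1}{2|g|^2}(g_1,-g_2)\,\mu\lfloor(\p_\star B_R^+\setminus\p B_R(z_0)).
\end{equation*}
At $\mathcal{H}^1$-a.e.\ point of $\p_\star B_R^+\setminus\p B_R(z_0)$ the left-hand side is a unit vector, while the right-hand side is the scalar $\mu/(2|g|^2)$ (thought of through its polar decomposition) times the vector $(g_1,-g_2)$. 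Consequently $\nu_{B_R^+}(x)$ must be parallel to $(g_1(x),-g_2(x))$, and equivalently to $(-g_1(x),g_2(x))$, which is exactly the claim with $\lambda(x) = -1/|g(x)|^2$ up to the sign coming from the polar decomposition of $\mu$.

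The main subtlety is purely a bookkeeping one: justifying that the two measure expressions for $D\theta$ may be compared pointwise $\mathcal{H}^1$-a.e.\ on $\p_\star B_R^+$. This rests on the standard fact that $|D\theta|$ is concentrated on the essential boundary of $\{\theta=1\}$ and has polar decomposition with direction $-\nu_{B_R^+}$, together with the fact that $\mu \ll \mathcal{H}^1$ (established earlier in the paper as a consequence of Proposition in Section \ref{I}), so restricting both measures to $\p_\star B_R^+\setminus\p B_R(z_0)$ and comparing their Radon-Nikodym densities with respect to $\mathcal{H}^1\lfloor\p_\star B_R^+$ yields the direction identification.
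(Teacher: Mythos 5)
Your argument is correct, but it is not the route the paper takes. The paper's proof of this lemma never touches the identity \eqref{eq:deriveesdetheta}: instead it exploits the curl-free structure of $\nabla h$, writing $\p_xh-i\p_yh=2\theta(g_1+ig_2)$ and using $\p_y(\p_xh)=\p_x(\p_yh)$ together with the Leibniz rule for $BV\times W^{1,q}$ and the fact that $\IM(\p_{\overline{z}}g)=0$ (from \eqref{eq:deriveeg}) to obtain $g_1\p_y\theta+g_2\p_x\theta=0$ in $\D'(B_R(z_0))$; testing against $\psi=(g_2\ph,g_1\ph)$ and applying the Gauss--Green formula for sets of finite perimeter then gives $g_2\nu_1+g_1\nu_2=0$ $\H^1$-a.e., which is the stated collinearity. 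You instead equate the two available representations of the vector measure $D\theta$: the De Giorgi structure theorem on one side, and the formula \eqref{eq:deriveesdetheta} (proved inside Lemma \ref{lemme2} using \eqref{1}, hence legitimately available here) on the other, and conclude by comparing polar decompositions. Both arguments are sound; yours is shorter and, as a bonus, already yields that $\mu_{\lfloor B_R(z_0)}$ is concentrated on $\p_\star B_R^+\setminus \p B_R(z_0)$ with $|\mu|=4|g|\H^1_{\lfloor \p_\star B_R^+}$, i.e.\ most of Lemma \ref{lemme4}, whereas the paper derives that separately; the paper's route has the feature of isolating which structural ingredient (the orthogonality of $\nu_{B_R^+}$ to the level lines of the potential $H$) comes purely from \eqref{2}. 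Two harmless slips: $2\nu_{B_R^+}$ has modulus $2$, not $1$, and the normalization works out to $\lambda=\pm 1/|g|$ rather than $\pm 1/|g|^2$ (indeed $|\mu|=4|g|\,\H^1_{\lfloor\p_\star B_R^+}$, so the density $\mu/(2|g|^2)$ has modulus $2/|g|$); since the lemma only asserts existence of some real $\lambda$, neither affects the conclusion. Also note that the absolute continuity $\mu<<\H^1$ is not actually needed: the equality of the two measures plus uniqueness of the polar decomposition of $D\theta$ already gives the pointwise identification $\H^1$-a.e.
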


\begin{proof}
From the definition of $g$ and $\theta$ we have that
\[ \p_xh-i\p_yh=2\theta g= 2\theta (g_1+ig_2). \]
We must then have that
\begin{equation}\label{equality}
 \p_y (\theta g_1)=-\p_x(\theta g_2)
\end{equation}
in the sense of distributions. Since $\theta \in BV(B_R(z_0))$ and $g\in \C^0(\overline{B_R(z_0)})\cap W^{1,q}(B_R,\mathbb{C})$ for every $1\leq q<+\infty$, we can use the Leibniz formula, cf.\ Lemma \ref{lem:A2}, to obtain
\[ \p_y (\theta g_1)= \p_y\theta g_1+ \theta \p_yg_1 \text{ and }  \p_x (\theta g_2)= \p_x \theta g_2+\theta \p_y g_2. \] 
From \eqref{equality}, we then have
\[\p_y\theta g_1+ \theta \p_yg_1 = -\p_x \theta g_2-\theta \p_y g_2.\]
That is
\begin{eqnarray}
\p_y\theta g_1 +\p_x\theta g_2&=&-\theta ( \p_yg_1+\p_xg_2)  = -\theta \text{Im} (\p_{\overline{z}}g) \nonumber \\
&=& -\theta \text{Im} (\frac{\theta h}{4}) = 0. \nonumber
\end{eqnarray}
In the last equality we have used \eqref{eq:deriveeg}.

We deduce that $\p_y \theta g_1 +\p_x \theta g_2=0$ in $\D'(B_R(z_0))$. We multiply by a test 
function $\varphi \in \C^\infty_c(B_R(z_0))$, and we use that $\p_x\theta, \p_y\theta$ are 
Radon measures and $g$ is continuous to obtain
\[\langle \p_y \theta,g_1\ph \rangle +\langle \p_x\theta,g_2\ph \rangle=0.\]
We let $\psi:=(g_2\ph,g_1\ph)$ and we can see that the previous equality can be rewritten as
\begin{eqnarray}
\int_{B_R(z_0)} \theta \dive \psi =\int_{B_R^+}\dive \psi -\int_{B_R^-} \dive \psi=0.  \nonumber
\end{eqnarray}
We now use the Gauss-Green formula for sets of finite perimeter, cf.\ \cite{EvansGariepy2015}, and we find that
\[2\int_{\p_\star B_R^+} \psi\cdot \nu_{B_R^+}d\H^1 =0. \]
Note that the Gauss-Green formula is valid for vector fields in $\C^1(\overline{B_R(z_0)},\R^2)$, but since $\psi$ is in $W^{1,p}(B_R(z_0),\R^2)$ for every $1\leq p<+\infty$, we can use an approximation argument to show that it is also valid for our $\psi$. Recall that $\psi=(g_2\varphi,g_1\varphi)$ and that the last equality is true for all $\varphi\in \C^\infty_c(B_R(z_0))$, hence we obtain $g_2\nu_1+g_1\nu_2=0$ $\H^1$-almost everywhere on $\p_\star B_R^+ \setminus \p B_R$. This means precisely that $\nu_{B_R^+}$ is collinear to $(-g_1,g_2)$ $\H^1$-almost everywhere on $\p_\star B_R^+\setminus \p B_R(z_0)$.
\end{proof}

We are now in position to give a first description of the support of $\mu$.
\begin{lemma}\label{lemme4}
Let $h\in H^1(\O)$, $\mu \in \M(\O)$ be such that \eqref{1}, \eqref{2} hold. Let $R>0$, $\theta \in BV(B_R(z_0))$ and $g=g_1+ig_2 \in W^{1,q}(B_R(z_0),\mathbb{C})$ for every $1\leq q<+\infty$ be as in Lemma \ref{lemme2}. Let $B_R^+=\{z\in B_R;\theta(z)=+1\}$. We have
\begin{equation}\label{eq:supp}
\supp \mu = \p_\star B_R^+ \setminus \p B_R(z_0),
\end{equation}
\begin{equation}\label{eq:description1}
\mu_{\lfloor B_R}= 4(g_1\nu_1-g_2\nu_2)\H^1_{\lfloor \p_\star B_R^+ \setminus \p B_R(z_0)},
\end{equation}
where $\nu_{B_R^+}=(\nu_1,\nu_2)$ is the generalized normal to $\p_\star B_R^+ \setminus \p B_R(z_0)$.
\end{lemma}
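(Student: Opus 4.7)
The plan is to combine the two scalar identities from Lemma \ref{lemme2}, namely
\[
\p_x\theta=-\RE\!\left(\tfrac{1}{2g}\right)\mu=-\tfrac{g_1}{2|g|^2}\mu,\qquad \p_y\theta=-\IM\!\left(\tfrac{1}{2g}\right)\mu=\tfrac{g_2}{2|g|^2}\mu,
\]
(where I have used $1/(2g)=(g_1-ig_2)/(2|g|^2)$) with the structure theorem for sets of finite perimeter applied to $B_R^+$, and with the geometric constraint on the normal $\nu_{B_R^+}$ coming from Lemma \ref{lemme3}. Note that by the choice of $R$ made in Lemma \ref{lemme2}, $g$ is continuous and bounded away from $0$ on $\overline{B_R(z_0)}$, so $g_1,g_2,1/|g|^2$ are bounded continuous functions there; this legitimizes multiplying the Radon measures $\p_i\theta$ and $\mu$ by these quantities in what follows.

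\textbf{Step 1: extracting $\mu$ from $D\theta$.} I would multiply the first identity (as measures) by $-2g_1$, the second by $2g_2$, and add. On the right this gives $(g_1^2+g_2^2)|g|^{-2}\mu=\mu$, so
\[
\mu=-2g_1\,\p_x\theta+2g_2\,\p_y\theta\qquad\text{in }B_R(z_0).
\]
This already shows $\mu$ is absolutely continuous with respect to $|D\theta|$, so its support inside $B_R(z_0)$ is contained in $\supp|D\theta|$.

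\textbf{Step 2: rewriting $D\theta$ via Gauss--Green.} Since $\theta=2\mathbf{1}_{B_R^+}-1$ and $B_R^+$ has finite perimeter in $B_R(z_0)$, the structure theorem (cf.\ \cite{AmbrosioFuscoPallara2000,EvansGariepy2015}) gives
\[
D\theta=-2\,\nu_{B_R^+}\,\mathcal{H}^1_{\lfloor \p_\star B_R^+\setminus \p B_R(z_0)}.
\]
Substituting into the formula of Step~1,
\[
\mu_{\lfloor B_R(z_0)}=4\bigl(g_1\nu_1-g_2\nu_2\bigr)\,\H^1_{\lfloor \p_\star B_R^+\setminus \p B_R(z_0)},
\]
which is exactly \eqref{eq:description1}.

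\textbf{Step 3: identifying the support.} To pin down \eqref{eq:supp} it remains to show that the density $4(g_1\nu_1-g_2\nu_2)$ is nonzero $\H^1$-a.e.\ on $\p_\star B_R^+\setminus \p B_R(z_0)$. This is where Lemma \ref{lemme3} comes in: at $\H^1$-a.e.\ such point $\nu_{B_R^+}=\lambda(-g_1,g_2)$ for some scalar $\lambda(x)$, and since $|\nu_{B_R^+}|=1$ and $|g|$ does not vanish, $|\lambda|=1/|g|\neq 0$. Then
\[
g_1\nu_1-g_2\nu_2=-\lambda(g_1^2+g_2^2)=-\lambda|g|^2\neq 0 \quad \H^1\text{-a.e.}
\]
Thus $\mu$ has an $\H^1$-a.e.\ nonzero density on $\p_\star B_R^+\setminus \p B_R(z_0)$, which together with Step~1 yields \eqref{eq:supp}.

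\textbf{Expected obstacles.} The proof is largely a clean packaging of Lemmas \ref{lemme2} and \ref{lemme3}; the only points needing care are (i) the legitimacy of the pointwise algebra of Step~1, which relies on $g$ being continuous and nonvanishing on $\overline{B_R(z_0)}$ so that $g_1,g_2,1/|g|^2$ are bounded continuous multipliers of Radon measures, and (ii) the identification between $\supp\mu$ (a closed set) and the essential boundary $\p_\star B_R^+$; here one should read \eqref{eq:supp} as the identity of the two sets up to $\H^1$-null, or equivalently as the statement that any open subset of $B_R(z_0)$ disjoint from $\p_\star B_R^+$ carries no mass.
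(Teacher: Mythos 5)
Your argument is correct, and it reaches the conclusion by a genuinely different (and more economical) route than the paper. The paper's proof goes back to the definition of $\mu$: it writes $\langle \mu,\ph\rangle=\int_{B_R}\nabla h\cdot\nabla\ph+h\ph$, uses $\nabla h=\theta\,(2g_1,-2g_2)$ to split the integral over $B_R^{+}$ and $B_R^{-}$, applies the Gauss--Green formula for sets of finite perimeter on each piece, and observes that the bulk terms cancel against $\int_{B_R}h\ph$ because $\p_{\overline{z}}g=\theta h/4$; what survives is exactly the boundary term $4\int_{\p_\star B_R^+\setminus\p B_R}(g_1\nu_1-g_2\nu_2)\ph\,d\H^1$. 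You instead invert the measure identity \eqref{eq:deriveesdetheta}, $\p_x\theta=-\RE\left(\tfrac{1}{2g}\right)\mu$ and $\p_y\theta=-\IM\left(\tfrac{1}{2g}\right)\mu$, by multiplying with the bounded continuous functions $-2g_1$ and $2g_2$ and adding, and then substitute the structure-theorem representation $D\theta=-2\nu_{B_R^+}\H^1_{\lfloor \p_\star B_R^+\setminus\p B_R(z_0)}$; your signs and the final density $4(g_1\nu_1-g_2\nu_2)$ agree with the paper's, and Step 3 (nonvanishing of the density via Lemma \ref{lemme3} and $|\lambda|=1/|g|$) is the same as the paper's last line. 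What your route buys is that the Gauss--Green computation is not redone: it was already performed, in equivalent form, when \eqref{eq:deriveesdetheta} was derived in the proof of Lemma \ref{lemme2}. The only caveat is presentational: \eqref{eq:deriveesdetheta} is established inside the proof of Lemma \ref{lemme2} but is not part of its statement, so if you want to quote it you should either promote it to the statement of that lemma or recall its derivation. Your closing remark on \eqref{eq:supp} is also apt: as written the identity equates a closed set with an essential boundary, and it should be read as ``every open set disjoint from $\p_\star B_R^+$ is $\mu$-null together with the a.e.\ nonvanishing of the density''; this is exactly the level of precision of the paper, which only upgrades $\p_\star B_R^+\setminus\p B_R(z_0)$ to a genuine closed $\C^1$ curve later, in the proof of Theorem \ref{th:main4}.
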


\begin{proof}
Let $\ph \in \C^\infty_c(B_R(z_0))$ be a test function, we have
\begin{eqnarray}
\langle \mu, \ph \rangle & = &\langle -\Delta h+h,\ph \rangle= \int_{B_R(z_0)} \nabla h \cdot \nabla \ph +h\ph \nonumber \\
&=& 2\int_{B_R^+} g_1\p_x\ph -g_2\p_y\ph+2\int_{B_R^-} (-g_1\p_x\ph+g_2\p_y\ph)+\int_{B_R} h\ph \nonumber \\
&=& 2\int_{B_R^+} (-\p_xg_1 +\p_yg_2)\ph +2\int_{B_R^+}(\p_xg_1-\p_yg_2)\ph +\int_{B_R} h\ph  \nonumber \\
& & \phantom{aaaaaa } + 4\int_{\p_\star B_R^+ \setminus \p B_R} (g_1\ph \nu_1-g_2\ph \nu_2)d\H^1 \nonumber \\
&=& -4\int_{B_R^+} \RE(\p_{\overline{z}}g)\ph+4\int_{B_R^-} \RE(\p_{\overline{z}}g)\ph+\int_{B_R} h\ph \nonumber \\ 
& & \phantom{aaaaaaa }+4\int_{\p_\star B_R^+\setminus \p B_R}(g_1\ph\nu_1-g_2\ph\nu_2)d\H^1. \nonumber
\end{eqnarray}
Now recall that $\p_{\overline{z}}g=\frac{1}{4}\frac{h\p_zh}{g}=\frac{\theta h}{4}$ and thus 
\begin{eqnarray}
\langle \mu, \ph \rangle & = &-\int_{B_R^+}h\ph-\int_{B_R^-} h\ph +\int_{B_R} h\ph +4\int_{\p_\star B_R^+\setminus \p B_R}(g_1\ph\nu_1-g_2\ph\nu_2)d\H^1 \nonumber \\
&=& 4\int_{\p_\star B_R^+ \setminus B_R}(g_1\ph\nu_1-g_2\ph\nu_2)d\H^1.
\end{eqnarray}
But since from Lemma \ref{lemme3}, $\nu_{B_R^+}=(\nu_1,\nu_2)$ is collinear to $(-g_1,g_2)$ and $g$ does not vanish in $B_R(z_0)$ we obtain \eqref{eq:supp} and \eqref{eq:description1}.
\end{proof}

Our next step is to push further the analysis of $\p_\star B_R^+ \setminus \p B_R(z_0)$. We want to show that, up to take a smaller $R>0$, $\p_\star B_R^+ \setminus \p B_R(z_0)$ is a regular $\C^{1,\a}$ curve (for all $0<\alpha<1$). We begin by using a result of Ambrosio-Caselles-Morel-Masnou which allows us to decompose $\p_\star B_R^+$ in a countable union of connected simple rectifiable closed curves.

In order to state the next theorem, following \cite{AmbrosioCasellesMasnouMorel2001},  we introduce a \textit{formal} Jordan curve $J_\infty$ whose interior is
$\R ^n$ and a \textit{formal} Jordan curve $J_0$ whose interior is empty. We denote by $\mathcal{S}$ the set of
Jordan curves and formal Jordan curves. We then have the following description of the essential boundary of sets
of finite perimeter in $\R^2$.

\begin{theorem}[Corollary 1 \cite{AmbrosioCasellesMasnouMorel2001}]\label{ACMM}
Let $E$ be a subset of $\R^2$ of finite perimeter. Then there is a unique decomposition of $\partial_\star E$ into rectifiable Jordan curves $\{C_i^+, C_k^-: i,k \in \mathbb{N} \}\subset \mathcal{S}$, such that

\begin{itemize}
\item[i)]Given $\inte(C_i^+), \inte(C_k^+), i\neq k$, they are either disjoint or one is contained in the other; given  $\inte(C_i^-), \inte(C_k^-), i\neq k$, they are either disjoint or one is contained in the other. Each $\inte(C_i^-)$ is contained in one of the $\inte(C_k^+)$.
\item[ii)]$P(E)=\sum_i \mathcal{H}^1(C_i^+) +\sum_k \mathcal{H}^1(C_k^-)$.
\item[iii)] If $\inte(C_i^+) \subset \inte(C_j^+)$, $i \neq j$, then there is some rectifiable Jordan curve $C_k^-$ such that $\inte(C_i^+) \subset \inte (C_k^-) \subset \inte(C_j^+)$. Similarly if $\inte(C_i^-) \subset \inte(C_j^-)$, $i \neq j$, then there is some rectifiable Jordan curve $C_k^+$ such that $\inte(C_i^-) \subset \inte (C_k^+) \subset \inte(C_j^-)$.
\item[iv)] Setting $L_j=\{i ; \ \inte(C_i^- \subseteq \inte(C_j^+)\}$, the sets $Y_j=\inte(C_j^+) \setminus \cup_{i\in L_j} \inte(C_i^-)$ are pairwise disjoint, indecomposable and $E=\cup_{j} Y_j$.
\end{itemize}
\end{theorem}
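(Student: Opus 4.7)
The plan is to adapt the strategy of Ambrosio--Caselles--Masnou--Morel, which rests on three pillars: De Giorgi's structure theorem for sets of finite perimeter, the indecomposability decomposition in the $BV$ sense, and the planar saturation (hole-filling) operation. First I would invoke De Giorgi's theorem so that $\p_\star E$ coincides up to an $\H^1$-null set with the reduced boundary $\partial^* E$, which is $\H^1$-rectifiable; the task is then to organize this $1$-rectifiable planar set into countably many Jordan curves with the claimed tree structure.

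Next I would decompose $E$ into its \emph{indecomposable components}, using the fact that every set of finite perimeter in $\R^n$ admits a unique at most countable decomposition $E = \bigcup_j Y_j$ into indecomposable pieces with $P(E) = \sum_j P(Y_j)$ (the $BV$ analogue of a decomposition into connected components). This yields item (iv) once I identify the essential boundary of each indecomposable $Y_j$ with a ``flower'' made of one outer Jordan curve $C_j^+$ and possibly several inner Jordan curves $C_k^-$ corresponding to the bounded connected components of $Y_j^c$.

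The central step, and the main obstacle, is to show that the essential boundary of an indecomposable planar set $F$ of finite perimeter is the disjoint union of exactly one rectifiable Jordan curve $C^+$, bounding the saturation $\widetilde F$ (obtained by filling in the bounded connected components of $F^c$), and countably many inner rectifiable Jordan curves $C_k^-$, each bounding a hole of $F$. My approach would be to approximate $F$ by polyhedral sets $P_n$ with $\mathbf{1}_{P_n}\to\mathbf{1}_F$ in $BV$ and $\H^1(\p P_n)\to P(F)$; each $\p P_n$ is a finite disjoint union of polygonal Jordan curves. The uniform length bound provides Hausdorff compactness, and the indecomposability of $F$ forces the outer polygons of $P_n$ to converge to a single limiting rectifiable Jordan curve, while the inner holes can proliferate into a countable family in the limit. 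The lower semicontinuity of $\H^1$-length of connected sets under Hausdorff convergence then preserves both rectifiability and the perimeter identity (ii).

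Items (i) and (iii), describing the tree structure of containment, would follow from the classical Jordan curve theorem applied to rectifiable planar Jordan curves: any two such curves with essentially disjoint interiors are either disjoint or nested, and between two strictly nested curves of the same orientation one of the opposite orientation must appear (otherwise the perimeter additivity in (ii) would be violated or $F$ would fail to be indecomposable). The technical heart of the argument, and where the proof is delicate, is the rigorous translation between measure-theoretic data (essential boundary, indecomposability, $\H^1$-rectifiability) and classical planar topology (Jordan curves, their interiors, nested components).
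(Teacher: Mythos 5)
The paper does not prove this statement: it is imported verbatim as Corollary 1 of Ambrosio--Caselles--Masnou--Morel \cite{AmbrosioCasellesMasnouMorel2001}, so there is no internal proof to compare yours against. Measured against the actual argument in that reference, your overall architecture is the right one: decompose $E$ into indecomposable components (their Theorem 1), saturate each component to produce the outer curves $C_j^+$, let the holes produce the inner curves $C_k^-$, and read off items (i)--(iv) from the resulting containment tree together with perimeter additivity. That is exactly how the cited paper reduces Corollary 1 to its Theorem 7, which states that the essential boundary of a \emph{simple} set (bounded, indecomposable, saturated) is equivalent modulo $\H^1$-null sets to a rectifiable Jordan curve.

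The genuine gap in your sketch is at precisely that central step. You propose polyhedral approximation, Hausdorff compactness, and lower semicontinuity of $\H^1$ on connected compacta. But a Hausdorff limit of polygonal Jordan curves need not be a Jordan curve: injectivity is not preserved (the approximating curves can pinch, self-touch, or fold back while their lengths stay bounded), and the semicontinuity theorem for connected compact sets only gives a lower bound on length --- it neither prevents length from being lost through overlap nor upgrades the limit set to a homeomorphic image of the circle. You would also have to rule out the outer boundaries of the $P_n$ splitting into several limit components, and indecomposability of $F$ is a measure-theoretic property that does not directly control the topology of the approximating polygons. Ambrosio--Caselles--Masnou--Morel avoid all of this by working with the integral $1$-current carried by $\partial^* E$: Federer's decomposition theorem (4.2.25 in his book) splits an integral $1$-cycle into indecomposable cycles, each of which is parametrized by a closed Lipschitz curve, and simplicity of $E$ is then used to show that exactly one such cycle occurs and that its parametrization is injective up to an $\H^1$-negligible set. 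To complete your route you would need a substitute for that rigidity --- a direct proof that the reduced boundary of a simple planar set, being $\H^1$-rectifiable, of finite length, and measure-theoretically connected, admits an essentially injective closed Lipschitz parametrization. That is the technical heart you have correctly identified but not supplied.
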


With the help of the previous theorem, by showing that $B_R^+$ is a set of finite perimeter in $\R^2$ and by using a suitable version of the coarea formula, we can proceed as in Lemma 3.15 and Lemma 3.17 of \cite{Rodiac2016} to obtain:

\begin{lemma}\label{lem:lemme5}
Under the same assumptions as in Lemma \ref{lemme4}, there exist $0<R'<R$ and (possibly infinitely many) connected rectifiable simple curves $\Gamma_j$ such that
\begin{equation}\label{eq:decomposition}
\partial_\star B_{R'}^+ \setminus \partial B_{R'}(z_0) =\bigcup _{j=1}^{+\infty} \Gamma_j.
\end{equation}
\end{lemma}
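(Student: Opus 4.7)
The plan is to apply the Ambrosio--Caselles--Masnou--Morel decomposition (Theorem \ref{ACMM}) to an appropriately chosen sub-ball. The obstruction is that this theorem is stated for sets of finite perimeter in $\R^2$, whereas a priori we only know that $\theta \in BV(B_R(z_0),\{\pm 1\})$, so that $B_R^+$ has finite perimeter relative to $B_R(z_0)$; we also have to cope with the boundary piece $\partial B_{R'}(z_0)$ that will appear in the essential boundary of any truncation.

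First, I would reduce the problem to a globally finite-perimeter situation. Since $\theta \in BV(B_R(z_0),\{\pm 1\})$, the relative perimeter $P(B_R^+, B_R(z_0))$ is finite. For any $0<R'<R$ I consider the truncated set $B_{R'}^+ := B_R^+ \cap B_{R'}(z_0)$; its essential boundary in $\R^2$ satisfies
\begin{equation}
\partial_\star B_{R'}^+ \subset \bigl(\partial_\star B_R^+ \cap \overline{B_{R'}(z_0)}\bigr) \cup \bigl(\partial B_{R'}(z_0) \cap \{\theta = 1\}^\star\bigr),
\end{equation}
where $\{\theta=1\}^\star$ denotes the measure-theoretic interior. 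Since $\H^1(\partial B_{R'}(z_0))=2\pi R'$, this gives $P(B_{R'}^+,\R^2)<+\infty$, so $B_{R'}^+$ is a set of finite perimeter in $\R^2$ for every $R'<R$.

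Next, I would select a good radius $R'$ via the coarea formula. The function $\rho \mapsto \H^0\bigl(\partial_\star B_R^+ \cap \partial B_\rho(z_0)\bigr)$ is measurable and, by applying the coarea formula (or the layer-cake identity for the countably $\H^1$-rectifiable set $\partial_\star B_R^+$) to the $1$-Lipschitz radial function $z\mapsto |z-z_0|$,
\begin{equation}
\int_0^R \H^0\bigl(\partial_\star B_R^+ \cap \partial B_\rho(z_0)\bigr)\, d\rho \leq \H^1\bigl(\partial_\star B_R^+\bigr)<+\infty.
\end{equation}
Hence there exists $R'\in (0,R)$ for which $\partial_\star B_R^+ \cap \partial B_{R'}(z_0)$ is a finite set. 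Fix such an $R'$; then the essential boundary $\partial_\star B_{R'}^+$ equals the disjoint union (up to $\H^1$-null sets) of the interior piece $\partial_\star B_R^+ \cap B_{R'}(z_0)$ and a subset of $\partial B_{R'}(z_0)$, and these two pieces only meet in finitely many points.

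Now I apply Theorem \ref{ACMM} to $B_{R'}^+$ in $\R^2$. This produces a decomposition of $\partial_\star B_{R'}^+$ into an at most countable family of rectifiable Jordan curves $\{C_i^+, C_k^-\}$. Each such curve either lies entirely in $\overline{B_{R'}(z_0)}$ or in its complement; the relevant ones are those intersecting $B_{R'}(z_0)$. Each of these intersects the circle $\partial B_{R'}(z_0)$ in at most finitely many points (by the choice of $R'$), so removing from each such Jordan curve its finitely many circular arcs lying on $\partial B_{R'}(z_0)$ yields at most countably many connected rectifiable simple arcs, which I label $\Gamma_j$. By construction
\begin{equation}
\partial_\star B_{R'}^+ \setminus \partial B_{R'}(z_0) = \bigcup_{j=1}^{+\infty} \Gamma_j,
\end{equation}
which is the desired decomposition. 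The main delicacy lies in the proper use of the coarea formula to guarantee that the radius $R'$ yields only finitely many intersection points with the ambient circle, since otherwise cutting the Jordan curves on $\partial B_{R'}(z_0)$ could produce an uncountable family of connected components. This is exactly the argument carried out in Lemma 3.15 and Lemma 3.17 of \cite{Rodiac2016}, and the proof here is a verbatim adaptation.
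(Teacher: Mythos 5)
Your proposal is correct and follows essentially the same route the paper takes: truncate to a sub-ball of finite perimeter in $\R^2$, use the coarea formula to select a radius $R'$ whose circle meets $\partial_\star B_R^+$ in only finitely many points, and then apply the Ambrosio--Caselles--Masnou--Morel decomposition (Theorem \ref{ACMM}) and discard the circular arcs, exactly as in Lemmas 3.15 and 3.17 of \cite{Rodiac2016} to which the paper defers.
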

We are now ready to prove the main theorem of this section.

\begin{proof}[Proof of Theorem \ref{th:main4} ]
Let $h\in H^1(\O)$ and $\mu \in \M(\O)$ which satisfy \eqref{1} and \eqref{eq:2complex}. Let $z_0\in \supp \mu$ such that $\nabla h(z_0) \neq 0$. We can apply Lemma \ref{lemme2}, to find  $R>0$, $\theta \in BV(B_R(z_0))$, $g=g_1+ig_2 \in W^{1,q}(B_R(z_0),\mathbb{C})$ for every $1\leq q<+\infty$ such that
\begin{equation}
g^2=(\p_zh)^2 \ \ \ \ \text{ and }   \p_zh=\theta g.
\end{equation}
Besides by using Lemma \ref{lemme3}, if we set $B_R^+=\{z\in B_R; \theta(z)=+1\}$ we have
\eqref{eq:supp} and \eqref{eq:description1}.Then Lemma \ref{lem:lemme5} provides us with $0<R'<R$ and simple connected rectifiable curves $\Gamma_j$ such that \eqref{eq:decomposition} holds. We divide the proof into three steps. \\

\textbf{Step 1:} There exists $H \in \C^{1,\a}(B_R(z_0))$ for every $0<\alpha<1$, such that $H(z_0)=0$ and $c_i\in \R$ such that $\Gamma_i=\{ z \in B_R(z_0); H(z)=c_i \}$. \\

We recall from \eqref{eq:deriveeg} that $\p_{\overline{z}}g =\theta h/4$. In particular we find $\IM (\p_{\overline{z}}g)=0$. But $\p_{\overline{z}}g=\frac12 [(\p_xg_1-\p_yg_2)+i(\p_yg_1+\p_xg_2)]$. Thus we obtain
\[ \p_yg_1=-\p_x g_2 .\]
Hence applying Poincaré's lemma we can find $H\in W^{2,q}(B_R(z_0))$ for every $1\leq q<+\infty$ such that
\[g=\frac{\p_xH-i\p_yH}{2} \text{ in } B_R(z_0),\]
and $H(z_0)=0$. Thanks to the Sobolev embeddings, we have that $H\in \C^{1,\a}(B_R(z_0))$ for every $0\leq \a <1$. Besides $h$ and $H$ are related by
\[ \nabla h= \theta \nabla H \text{ in } B_R(z_0). \]
We now show that for all $i\in \mathbb{N}$ there exists $c_i\in \R$ such that for all $z\in \Gamma_i$ we have $H(z)=c_i$. Let $i\in \mathbb{N}$, for $x,y\in \Gamma_i$ we can find $\gamma:[0,1] \rightarrow \Gamma_i$ one-to-one and Lipschitz such that $\gamma(0)=x$ and $\gamma(1)=y$, cf.\ Lemma 3 in \cite{AmbrosioCasellesMasnouMorel2001}. Since $H\in \C^1(B_R(z_0))$, we have that $H\circ \gamma \in W^{1,\infty}([0,1])$ and
\[ \frac{d}{dt}(H\circ \gamma(t))=\nabla H(\gamma(t)).\gamma'(t). \]
But by using the fundamental theorem of analysis we have
\begin{eqnarray}
H(y)-H(x)&=&\int_0^1 (H\circ\gamma)'(t)dt=\int_0^1\nabla H(\gamma(t)).\gamma'(t)dt \nonumber.
\end{eqnarray}
We have that $\gamma'(t)$ is orthogonal to $\nabla H(\gamma(t))$ for almost every $t$ in $[0,1]$ because $\nabla H$ is collinear to $\nu_{B_R^+}$ $\H^1$-almost everywhere in $\p_\star B_R^+ \setminus \p B_R$, cf.\ Lemma \ref{lemme3}. We thus have that $H(y)=H(x)$ and $\Gamma_i \subset \{z\in B_{R'}; H(z)=c_i \}$. We claim that we have the equality $\Gamma_i=\{z\in B_{R'}; H(z)=c_i \}$. Indeed, since $g$ does not vanish in $B_R(z_0)$ and $\p_xH-i\p_yH=g$ we have that $\nabla H$ does not vanish in $B_R$. Thus the sets $\{z\in B_{R'},H(z)=c_i\}$ are diffeomorphic to open intervals for $R'$ small enough. In particular these sets are regular simple connected rectifiable curves. The conclusion follows from the following lemma proved in \cite{Rodiac2016}:

\begin{lemma}\label{connectedcurve}
Let $B$ be a ball of radius $R$. Let $\gamma$ and $\tilde{\gamma}$ be two connected rectifiable simple curves. We also denote by  $\gamma, \tilde{\gamma} :[0,1] \rightarrow \R^2$ some  Lipschitz parametrizations of these curves. We suppose that $\gamma$, $\tilde{\gamma}$ are homeomorphism from $[0,1]$ onto their image. Assume that
\begin{itemize}
\item[i)] $\gamma (]0,1[) \subset B$ and $\gamma(0),\gamma(1) \in \partial B,$
\item[ii)] $\tilde{\gamma} (]0,1[) \subset B$ and $\tilde{\gamma}(0),\tilde{\gamma}(1) \in \partial B,$
\item[iii)] $\tilde{\gamma}([0,1]) \subset \gamma ([0,1]).$
\end{itemize}
Then $\gamma = \tilde{\gamma}$.
\end{lemma}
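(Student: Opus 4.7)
The plan is to reduce the statement to a one-dimensional topological fact by passing to the parametrization of $\gamma$. Since $\gamma:[0,1]\to \gamma([0,1])$ is a homeomorphism onto its image by hypothesis, the inverse $\gamma^{-1}:\gamma([0,1])\to [0,1]$ is continuous. Combined with hypothesis (iii), this means that $\varphi := \gamma^{-1}\circ \tilde\gamma : [0,1]\to [0,1]$ is well-defined and continuous. Because $\tilde\gamma$ is injective (it too is a homeomorphism onto its image), $\varphi$ is a continuous injection of $[0,1]$ into itself, hence strictly monotone, and its image is a closed subinterval $[a,b]\subset [0,1]$.

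The main point is then to show that $[a,b]=[0,1]$, which would give $\tilde\gamma([0,1])=\gamma([a,b])=\gamma([0,1])$ and conclude. For this I would inspect $\varphi(0)$ and $\varphi(1)$ using the boundary hypotheses. By (ii), $\tilde\gamma(0)$ and $\tilde\gamma(1)$ lie in $\partial B$, while by (i), $\gamma(\,]0,1[\,)\subset B$, which is open, so $\gamma(\,]0,1[\,)\cap \partial B = \emptyset$. Hence
$$\{\tilde\gamma(0),\tilde\gamma(1)\} \subset \gamma([0,1])\cap \partial B \subset \{\gamma(0),\gamma(1)\},$$
so $\varphi(0),\varphi(1)\in \{0,1\}$. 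Injectivity of $\varphi$ forces $\{\varphi(0),\varphi(1)\}=\{0,1\}$, and therefore $[a,b]=[0,1]$.

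I do not anticipate any real difficulty here. The only point of vigilance is that the conclusion ``$\gamma=\tilde\gamma$'' must be read as equality of images (i.e.\ equality of the curves as subsets of $\R^2$), since the two parametrizations can obviously differ by a reparametrization of $[0,1]$. The argument uses nothing beyond the fact that a continuous injection of a compact interval into itself whose image contains both endpoints must be surjective, together with the elementary observation that an arc whose open part is strictly inside the open ball $B$ can only meet $\partial B$ at its two extremities.
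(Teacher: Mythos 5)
Your argument is correct. The paper itself does not prove this lemma; it simply states that it is proved in the earlier reference \cite{Rodiac2016}, so there is no in-text proof to compare against. Your self-contained argument is clean and complete: composing $\tilde\gamma$ with the continuous inverse $\gamma^{-1}$ to get a continuous injection $\varphi$ of $[0,1]$ into itself, noting that $\varphi$ is strictly monotone with image a closed subinterval, and then using the fact that $\gamma(\,]0,1[\,)$ lies in the open ball $B$ (so the only points of $\gamma([0,1])$ on $\partial B$ are $\gamma(0)$ and $\gamma(1)$) to force $\{\varphi(0),\varphi(1)\}=\{0,1\}$ and hence surjectivity of $\varphi$, is exactly the right reduction. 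Your caveat about reading the conclusion $\gamma=\tilde\gamma$ as equality of the curves (images) rather than of the parametrizations is also the correct interpretation, since the statement cannot hold literally at the level of maps. The only stylistic remark is that the Lipschitz hypothesis plays no role in your proof (nor should it: the statement is purely topological), so the argument is in fact slightly more general than what is asserted.
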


\textbf{Step 2:} There exists $\rho>0$ such that there exists a finite number of curves $\Gamma_i$ for which $\Gamma_i \cap B_\rho(z_0) \neq \emptyset$. \\
Indeed for $\rho>0$ small enough we have
\begin{eqnarray}
+\infty &>  \H^1(\supp \mu_{\lfloor B_\rho(z_0)})= & \H^1(\p_\star B_\rho^+ \setminus \p B_\rho(z_0))   \nonumber \\
&=& \H^1(\bigcup_{i=1}^{+\infty} \Gamma_i \cap B_\rho(z_0)) \nonumber \\
&\geq & \int_0^\rho \mathcal{H}^0(\bigcup_{i=1}^{+\infty}\Gamma_i \cap C_t) dt \nonumber
\end{eqnarray}
and for almost every $t\in [0,\rho]$ we have $\H^0(\bigcup_{i=1}^{+\infty} \Gamma_i \cap C_t) <+\infty$. Here we have denoted $C_t=\{z\in \R^2; |z-z_0|=t \}$, and we have used the coarea formula for the last inequality, cf.\ Theorem 2.93 in \cite{AmbrosioFuscoPallara2000}. But for $t$ small enough, every level curves $\{z \in B_t;H(z)=c_i \}$ meet the boundary of the ball $B_t$ exactly twice if it is not empty. This is due to the geometry of the level curves of a $\C^1$ function with nonvanishing gradient in a ball. We thus deduce that the number of curves $\Gamma_i$ inside $B_t$ is finite. \\

\textbf{Step 3:} Conclusion.

By applying Step 2 we can find $R$ small enough such that
\[\supp \mu_{\lfloor B_R(z_0)}=\{z\in B_R(z_0); H(z)=0\}=:\Gamma.\]
From Lemma \ref{lemme3} we have that $\mu_{\lfloor B_R(z_0)}=2(g_1\nu_1-g_2\nu_2)\H^1_{\lfloor \Gamma}.$
But since $(g_1,-g_2)=\nabla H$ we have that
\begin{equation}\label{eq:continuityofthedensity}
 \mu_{\lfloor B_R(z_0)}=2\nabla H\cdot \nu_{B_R^+} \H^1_{\lfloor \Gamma}.
\end{equation}
We recall that $\nu_{B_R^+}$ is collinear to $(-g_1,g_2)$, and since $\Gamma$ is a $\C^1$ curve, its outward pointing normal is continuous and we have, $\nu_{B_R^+}(x)=\nabla H(x)/ |\nabla H(x)|$ for every $x$ in $B_R(z_0)$ or $\nu_{B_R^+}(x)=-\nabla H(x)/|\nabla H(x)|$ for every $x$ in $B_R(z_0)$. This implies \eqref{eq:localsignofmu}.
We also have that $h$ is constant on $\supp \mu_{\lfloor B_R(z_0)}$. Indeed since $\nabla h=\theta \nabla H$ in $B_R(z_0)$ we have that there exist $c$ and $c'$ such that $h=H+c$ in $B_R^+$ and $h=-H+c'$ in $B_R^-$. But $H=0$ on $\supp \mu_{\lfloor B_R(z_0)}= \p_\star B_R^+ \setminus \p B_R(z_0)$ and $h$ is continuous in $\O$. Hence $c=c'$ and $h=c$ on $\supp \mu_{\lfloor B_R(z_0)}$. The proof is complete since Estimate \eqref{eq:BVVV} was obtained in Lemma \ref{lem:BVestimate}.
\end{proof}

\begin{corollary}\label{cor:summarysection1}
Let $h$ be in $H^1(\O)$ and $\mu$ be in $\mathcal{M}(\O)$ such that \eqref{1} and \eqref{2} hold. Then
\begin{itemize}
\item[1)] The set $A_1:=\supp \mu \cap \{| \nabla h|>0 \}$ is a countable union of $\C^1$ curves and thus is $\mathcal{H}^1$-countably rectifiable.
\item[2)] We have $\mu _{\lfloor\{|\nabla h|>0 \}}=2\sigma(x)|\nabla h|\mathcal{H}^1_{\lfloor A_1}$ with $\sigma:A_1 \rightarrow \{\pm 1 \}$ which is constant on the connected components of $A_1$.
\item[3)] The function $h$ is constant on the connected components of $A_1$.
\end{itemize}
\end{corollary}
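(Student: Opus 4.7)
The plan is to localize: I would apply Theorem \ref{th:main4} at every point of $A_1$ and stitch the local pieces together.

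First, I would cover $A_1$ by countably many balls in which Theorem \ref{th:main4} applies. For every $z_0\in A_1$, that theorem produces a radius $R(z_0)>0$ and a $\C^{1,\alpha}$ curve $\Gamma_{z_0}\subset B_{R(z_0)}(z_0)$ such that $\supp\mu\cap B_{R(z_0)}(z_0)=\Gamma_{z_0}$, with $\mu_{\lfloor B_{R(z_0)}(z_0)}=2\sigma_{z_0}|\nabla h|\H^1_{\lfloor\Gamma_{z_0}}$ for some $\sigma_{z_0}\in\{\pm 1\}$ and with $h$ constant on $\Gamma_{z_0}$. By separability of $\O$ I would extract a countable subcover $\{B_i\}_{i\in\mathbb{N}}$ of $A_1$. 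Since each $A_1\cap B_i=\Gamma_{z_i}$ is a $\C^1$ curve, $A_1=\bigcup_i\Gamma_{z_i}$ is a countable union of $\C^1$ curves and thus $\H^1$-countably rectifiable, which gives item 1.

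For item 2 I would set $\sigma(x):=\sigma_{z_i}$ whenever $x\in A_1\cap B_i$. The definition is unambiguous because on any nonempty overlap $A_1\cap B_i\cap B_j$ the measure $\mu$ admits the representation $2s|\nabla h|\H^1_{\lfloor A_1\cap B_i\cap B_j}$; since $|\nabla h|$ is continuous and strictly positive on $A_1$, that representation is unique, so $\sigma_{z_i}=\sigma_{z_j}$ on the overlap. Thus $\sigma$ is locally constant on $A_1$ and therefore constant on each connected component. Summing the local identities yields $\mu_{\lfloor\{|\nabla h|>0\}}=2\sigma(x)|\nabla h|\H^1_{\lfloor A_1}$ globally.

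For item 3, Theorem \ref{th:main4} supplies constants $c_i$ with $h\equiv c_i$ on $\Gamma_{z_i}$. Whenever $\Gamma_{z_i}\cap\Gamma_{z_j}\neq\emptyset$, continuity of $h$ forces $c_i=c_j$, so $h$ is locally constant on $A_1$ and hence constant on every connected component. The only delicate point in the whole argument is checking the compatibility of the local descriptions on overlapping balls; this is precisely the content of the uniqueness already built into Theorem \ref{th:main4}, which pins down $\supp\mu\cap B$ as the single curve $\Gamma$ together with its signed density, so once this is noted the corollary reduces to a patching exercise using separability and continuity, with no further analytic input required.
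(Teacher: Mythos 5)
Your proof is correct and follows essentially the same route as the paper's: cover $A_1$ by countably many balls on which Theorem \ref{th:main4} applies and patch the local descriptions. The only cosmetic differences are that you invoke separability/Lindel\"of rather than balls centred at rational points, and you obtain local constancy of $\sigma$ from uniqueness of the signed density on overlaps rather than from the continuity of $\nabla H\cdot\nu_{B_R^+}$ in \eqref{eq:continuityofthedensity}; both are equivalent here.
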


\begin{proof}
\begin{itemize}
\item[1)] For all $z$ in $\mathbb{Q}^2 \cap A_1$ we can find $R_z>0$ such that Theorem \ref{th:main4} holds in $B_{R_z}(z)$. Since $\mathbb{Q}^2 \cap A_1$ is countable we can number its elements and obtain that $A_1 \subset \bigcup_{i=1}^{+\infty} B_{R_i}(z_i)$ for $z_i \in \mathbb{Q}^2 \cap A_1$ for all $i\in \mathbb{N}$. Now, since $\supp \mu \cap B_{R_i}(z_i)$ is a $\C^1$ curve $\gamma_i$ we obtain that $A_1=\bigcup_{i=1}^{+\infty} \gamma_i$ is a countable union of $\C^1$ curve and hence $\mathcal{H}^1$-countably rectifiable.
\item[2)] The previous point shows that if $E \subseteq \{ |\nabla h|>0 \}$ is such that $\mathcal{H}^1(E \cap A_1)=0$ then $\mu(E)=0$. This implies that $\mu_{\lfloor\{ |\nabla h|>0\}}=f\mathcal{H}^1_{\lfloor A_1}$ for some $f:A_1\rightarrow \R$ $\mathcal{H}^1$-integrable. But from \eqref{eq:localsignofmu} we see that $f=2\sigma |\nabla h|$ for some $\\sigma:A_1\rightarrow \{ \pm 1 \}$. From \eqref{eq:continuityofthedensity} we can see that $\sigma |\nabla h|$ is continuous. Thus $\sigma$ must be continuous on $A_1$ and hence constant on the connected components of $\supp \mu \cap\{ |\nabla h|>0\}$.
\item[3)] From Theorem \ref{th:main4}, $h$ is locally constant on $A_1$, since it is continuous on $\Omega$ it is constant on the connected components of $A_1$.
\end{itemize}
\end{proof}

We have seen that near regular points of $h$, the measure $\mu$ such that \eqref{1} and \eqref{2} hold is supported by a $\mathcal{C}^1$-curve. We would like to point out that we can have an intersection of several curves in the support of $\mu$ near isolated singular points of $h$. This is shown by the following example

\begin{example}
Let $n\geq 2$ and set $H_n(x):=f_n(r) \cos(n\theta)$, for $x\in  B_1(0)$ where  $(r,\theta)$ are the usual polar coordinates and $f_n$ is a solution of 
\begin{equation}
r^2f''_n(r)+rf'_n(r)-f_n(r)(r^2+n^2)=0,
\end{equation}
with $f_n(0)=0$. We can check that $f_n'(0)=0$. Thus if we set $h_n(x)=|H_n(x)|$ in $B_1(0)$ we have that $h_n$ satisfies \eqref{1} and \eqref{2} with $\mu_n:=-\Delta h_n+h_n$ which is supported on the intersection of $n$ straight lines.
\end{example}

The study of the measure $\mu$ on the set of singular points of $h$ is the content of the next section.

\section{Description of the measure $\mu$ on the set of singular points of $h$}\label{III}
 
 In this section we show that $\Delta h_{\lfloor \{|\nabla h|=0 \}}=0$ and thus $\mu_{\lfloor \{|\nabla h|=0 \}}=h\textbf{1}_{\{|\nabla h|=0 \}}$.  In order to do that, we first prove that $\nabla h$ is in $BV_{\text{loc}}(\O,\R^2)$. Then we can alternatively use an adaptation of an argument of Sandier-Serfaty in \cite{SandierSerfaty2003}, which relies on a generalized Gauss-Green formula, or use fine properties of $BV$ functions to deduce the stronger result that $D^2h_{\lfloor \{|\nabla h|=0 \}}=0$. To prove that $\nabla h$ is in $BV_{\text{loc}}(\O,\R^2)$ we use the previous section and the good estimates that we obtained on the $BV$ norm of $\nabla h$ in $\{|\nabla h|>0 \}$.

\begin{proposition}\label{prop:BV}
Let $h\in H^1(\O)$ and $\mu\in \mathcal{M}(\O)$ be such that \eqref{1} and \eqref{2} hold. Then $\nabla h$ is in $BV_{\text{loc}}(\O,\R^2)$ and 
\begin{equation}\label{eq:globalbvestimate}
\|\p_{ij}h\|_{\M(U)}\leq C\left( |\mu|(U)+ \|h\|_{L^\infty(U)}|U |\right)
\end{equation}
for $1\leq i,j\leq 2$ and for every open sets $U \subset \subset \O$.
\end{proposition}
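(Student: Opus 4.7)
The plan is to globalize the local estimate of Lemma \ref{lem:BVestimate} to any relatively compact open subset $U \subset \subset \O$. I fix such a $U$ and set $V := U \cap \{|\nabla h|>0\}$; since $|\nabla h|$ is continuous on $\O$ by Proposition \ref{reg1}, $V$ is open and $\nabla h$ vanishes pointwise a.e.\ on $U \setminus V$, so morally all the mass of the distributional second derivatives of $h$ on $U$ should come from $V$.

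For every $z \in V$, Lemma \ref{lem:BVestimate} furnishes a radius $r(z)>0$, which I may further shrink so that $B_{r(z)}(z) \subset U$, on which
\[|\p_{ij}^2 h|(B_{r(z)}(z)) \leq |\mu|(B_{r(z)}(z)) + C \|h\|_{L^\infty(U)} |B_{r(z)}(z)|,\]
the constant $C$ being universal. The Besicovitch covering theorem then extracts from $\{B_{r(z)}(z)\}_{z \in V}$ a countable subfamily $\{B_k\}$ still covering $V$, with pointwise multiplicity bounded by a dimensional constant $\xi$. Summing the local bounds and using $\sum_k \mathbf{1}_{B_k} \leq \xi \mathbf{1}_U$ yields $\nabla h \in BV(V)$ with
\[|\p_{ij}^2 h|(V) \leq \xi \bigl( |\mu|(U) + C \|h\|_{L^\infty(U)} |U| \bigr).\]

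It remains to upgrade this from $V$ to $U$, that is, to certify that $\p_{ij}^2 h$ carries no singular concentrated mass on the interface $\p V \cap U$. My plan is to test the distributional derivative against $\varphi \chi_\varepsilon$ for $\varphi \in \C^\infty_c(U)$ and $\chi_\varepsilon := \eta(|\nabla h|/\varepsilon)$, where $\eta \in \C^\infty(\R)$ vanishes on $(-\infty,1]$ and equals $1$ on $[2,+\infty)$; since $\varphi \chi_\varepsilon \in \C^\infty_c(V)$, the integration-by-parts identity on $V$ applies directly. As $\varepsilon \to 0$, dominated convergence handles the principal terms, while the error $-\int_V \p_j h \, \varphi \, \p_i \chi_\varepsilon$ is supported in $\{\varepsilon \leq |\nabla h| \leq 2\varepsilon\}$, where the pointwise bound $|\p_j h| \leq 2\varepsilon$ exactly compensates the $1/\varepsilon$-growth of $|\p_i \chi_\varepsilon|$. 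The remaining integrand $\bigl|\nabla|\nabla h|\bigr|$ is in $L^1_{\mathrm{loc}}(\O)$ by Proposition \ref{reg1}, and the characteristic function of $\{\varepsilon \leq |\nabla h| \leq 2\varepsilon\}$ converges pointwise to $0$ as $\varepsilon \to 0$, so the error vanishes by dominated convergence. The hard part is exactly this error analysis; once it is settled, the distributional derivative on $U$ coincides with $(\p_{ij}^2 h)_V$ extended by $0$, proving $\nabla h \in BV(U)$ with the claimed estimate \eqref{eq:globalbvestimate}.
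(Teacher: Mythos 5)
Your first two steps (reduction to $V:=U\cap\{|\nabla h|>0\}$, covering by the balls of Lemma \ref{lem:BVestimate}, Besicovitch with bounded overlap, and summation of the local variation bounds) are correct and follow the same strategy as the paper; if anything, summing the variation measures directly with overlap constant $\xi$ is cleaner than the paper's device of first normalizing the test field so that $\p_jh\,\dive\varphi\geq 0$. The problem is in your third step, the passage from $BV(V)$ to $BV(U)$, which you correctly identify as the crux but do not actually close.

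The error term $-\int_V \p_j h\,\varphi\,\p_i\chi_\e$ is bounded, after the $\e$-cancellation you describe, by $C\int_{\{\e\leq|\nabla h|\leq 2\e\}\cap\supp\varphi}\bigl|\nabla|\nabla h|\bigr|$, and you invoke dominated convergence with $\bigl|\nabla|\nabla h|\bigr|$ as the dominating function, claiming it is in $L^1_{\mathrm{loc}}(\O)$ ``by Proposition \ref{reg1}''. That proposition gives $|\nabla h|^2\in W^{1,p}_{\text{loc}}(\O)$ and $|\nabla h|\in\C^{0,\alpha}$, but \emph{not} $|\nabla h|\in W^{1,1}_{\text{loc}}(\O)$: on $\{|\nabla h|>0\}$ one has $\nabla|\nabla h|=\nabla(|\nabla h|^2)/(2|\nabla h|)$, which can a priori fail to be integrable near $\{|\nabla h|=0\}$ (the square root of a nonnegative $W^{1,p}$ function need not be $W^{1,1}$). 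In fact, the integrability of $\bigl|\nabla|\nabla h|\bigr|$ near the zero set is essentially equivalent to $|\nabla h|\in BV$ there, i.e.\ to a consequence of the very proposition you are proving; this is also why the paper's Proposition \ref{prop:singularset} must \emph{assume} $|\nabla h|\in BV$ before running its level-set/coarea argument. A quantitative restatement of the gap: by the coarea formula for $|\nabla h|^2$ you only get levels $s_n\to0$ with $s_n^2\,\H^1(\p_\star\{|\nabla h|>s_n\})\to0$, whereas your error term needs $s_n\,\H^1(\p_\star\{|\nabla h|>s_n\})\to0$, one full power of $s_n$ better. (A secondary, fixable point: $\varphi\chi_\e$ is only $W^{1,p}_c(V)$, not $\C^\infty_c(V)$, since $|\nabla h|$ is merely H\"older.) The paper avoids the interface entirely: it tests the distributional derivative against $\varphi\in\C^\infty_c(U)$, observes that $\int_U\p_jh\,\p_i\varphi=\int_V\p_jh\,\p_i\varphi$ because $\nabla h=0$ pointwise a.e.\ off $V$, and then bounds this single pairing by summing over the Besicovitch balls (after arranging a sign for the integrand, and enlarging each ball slightly to accommodate a compactly supported test field). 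No separate extension step, hence no boundary term on $\p V$, is ever needed. You should replace your cutoff argument by an estimate of the pairing $\int_V\p_jh\,\p_i\varphi$ itself along these lines.
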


\begin{proof}
Let $U$ be an open set such that $U\subset \subset \O$. Let $\varphi\in \C^\infty_c(U,\R^2)$ be such that $\|\varphi\|_{L^\infty}\leq 1$. We have that 
\begin{equation}
\int_U \p_xh \dive \varphi=\int_{\{ |\nabla h|>0 \}\cap U} \p_xh \dive \varphi.
\end{equation}
Now we can write \begin{equation}\nonumber
\{ |\nabla h|>0\}\cap U=\bigcup_{x\in \{|\nabla h|>0 \}\cap U} \overline{B(x,R_x)},
\end{equation}
where $R_x< \min \left(\dist(x,\{|\nabla h|=0 \}), \dist(x, \p U) \right)$ and in $B(x,R_x)$ we have \eqref{eq:BVestimate}. By using Besicovitch's covering Theorem we can find a fixed number $M>0$ and countable collections of the previous balls such that 
\begin{equation}
\{|\nabla h|>0\}\cap U=\bigcup_{i=1}^{M}\bigcup_{k_i=1}^{+\infty} \overline{B(x_{k_i},R_{k_i})}
\end{equation}
with $\overline{B(x_{k_m},R_{k_m})}\cap \overline{B(x_{k_l},R_{k_l})}=\emptyset$ if $l\neq m$.

We can assume without loss of generality that $\p_xh\dive \varphi \geq 0$ almost everywhere. Indeed if this is not true, we can replace $\varphi$ by $\tilde{\varphi}=\nabla u$ where $u$ solves $\Delta u= \text{sign} (\p_xh)^*\dive (\varphi)$. Here $(\p_xh)^*$ is the precise representative of $\p_xh$, see \cite{EvansGariepy2015} for the definition of the precise representative. We then have
\begin{eqnarray}
\int_{\{|\nabla h|>0\}\cap U} \p_xh \dive(\varphi)& =&\int_{\bigcup_{i=1}^{M}\bigcup_{k_i=1}^{+\infty} \overline{B(x_{k_i},R_{k_i})}} \p_xh \dive(\varphi) \nonumber \\
&\leq & \sum_{i=1}^M \sum_{k_i=1}^{+\infty} \int_{\overline{B(x_{k_i},R_{k_i})}} \p_xh \dive (\varphi).
\end{eqnarray}
But by using Lemma \ref{lem:BVestimate} we find that 
\begin{equation}\label{eq:estimeemesure}
\left |\int_{\overline{B(x_{k_i},R_{k_i})}} \p_xh \dive (\varphi)\right| \leq 2|\mu|(\overline{B(x_{k_i},R_{k_i}})+C\|h\|_{L^\infty(U)}|B(x_{k_i},R_{k_i})|.
\end{equation}
Indeed, for every $\a>1$ we have that 
\begin{equation}
\left |\int_{\overline{B(x_{k_i},R_{k_i})}} \p_xh \dive (\varphi)\right|  \leq \left |\int_{B(x_{k_i},\a R_{k_i})} \p_xh \dive (\phi)\right| 
\end{equation}
where $\phi$ is such that $\dive (\phi)=\dive(\varphi)$ in $\overline{B(x_{k_i},R_{k_i})}$, $\p_xh\dive(\phi)\geq 0$ almost everywhere and $\phi$ is compactly supported on $B(x_{k_i},\a R_{k_i})$. Thus 
\begin{equation}
\left |\int_{\overline{B(x_{k_i}, R_{k_i})}} \p_xh \dive (\varphi)\right| \leq |\mu|(B(x_{k_i},\a R_{k_i}))+C\|h\|_{L^\infty(U)}|B(x_{k_i},\a R_{k_i})|
\end{equation}
for any $\alpha>1$. When we let $\alpha$ go to $1$ we find \eqref{eq:estimeemesure}.
Since the balls are disjoints we can infer that 
\begin{equation}
\int_{\{|\nabla h| >0\}\cap U} \p_xh \dive(\varphi) \leq M(2|\mu|(U)+C\|h\|_{L^\infty(U)}|U|).
\end{equation}
The same is true for $\p_yh$ and thus we obtain \eqref{eq:globalbvestimate}.
\end{proof}

\begin{proposition}\label{prop:singularset}
Let $h\in H^1(\O)$ and $\mu \in \M(\O)$ be such that \eqref{1} holds. Let us assume that $|\nabla h|$ is continuous in $\O$ and $|\nabla h| \in BV(\O)$ then 
\begin{equation}
\Delta h_{\lfloor \{|\nabla h|=0 \}} =0.
\end{equation}
\end{proposition}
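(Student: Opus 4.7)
The plan is to prove this via the generalized Gauss--Green formula for divergence-measure fields, adapting the strategy of Sandier--Serfaty from \cite{SandierSerfaty2003}. Under the hypotheses, $\nabla h$ is locally bounded (since $|\nabla h|$ is continuous) and its distributional divergence $\Delta h = h - \mu$ is a Radon measure, so $\nabla h$ is a divergence-measure field in the sense of \cite{ChenTorresZiemer2009,Silhavy2005}. Set $Z:=\{|\nabla h|=0\}$ and $W_\delta:=\{|\nabla h|>\delta\}$. Since $|\nabla h|\in BV(\O)$, the coarea formula yields
\[
\int_0^\infty P(W_\delta,U)\, d\delta = |D|\nabla h||(U) < +\infty
\]
for every $U\subset\subset\O$, so $W_\delta$ has locally finite perimeter for a.e.\ $\delta>0$.

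For such $\delta$ and a test function $\varphi\in\C^\infty_c(\O)$, the generalized Gauss--Green formula reads
\[
\int_{W_\delta}\varphi\, d\Delta h = -\int_{W_\delta}\nabla h\cdot\nabla\varphi\, dx + \int_{\p_\star W_\delta}\varphi\,(\nabla h\cdot\nu_{W_\delta})^{\mathrm{tr}}\, d\H^1,
\]
with $(\nabla h\cdot\nu_{W_\delta})^{\mathrm{tr}}$ the interior normal trace of $\nabla h$. The key point is that this trace satisfies $|(\nabla h\cdot\nu_{W_\delta})^{\mathrm{tr}}|\leq\delta$ almost everywhere on $\p_\star W_\delta$: by continuity of $|\nabla h|$, for any $\eta>0$ the modulus of $\nabla h$ is at most $\delta+\eta$ on a one-sided neighbourhood of $\p_\star W_\delta$ inside $W_\delta$, and the definition of the normal trace in \cite{ChenTorresZiemer2009,Silhavy2005} as a local average from the inside inherits this bound. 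Hence the boundary integral is bounded by $\|\varphi\|_\infty\,\delta\,P(W_\delta,\supp\varphi)$.

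I would then pass to the limit along a sequence $\delta_n\to 0^+$ chosen so that $\delta_n\,P(W_{\delta_n},\supp\varphi)\to 0$; such a sequence exists because integrability of $\delta\mapsto P(W_\delta,\supp\varphi)$ near zero forces $\liminf_{\delta\to 0^+}\delta\,P(W_\delta,\supp\varphi)=0$ (otherwise a bound $P(W_\delta,\supp\varphi)\geq c/\delta$ would prevent integrability near the origin). Since $\nabla h \equiv 0$ on $Z$, dominated convergence yields $\int_{W_{\delta_n}}\nabla h\cdot\nabla\varphi\, dx \to \int_{\O}\nabla h\cdot\nabla\varphi\, dx = -\int_\O\varphi\, d\Delta h$ by the weak form of \eqref{1}, and $\int_{W_{\delta_n}}\varphi\, d\Delta h\to \int_{\O\setminus Z}\varphi\, d\Delta h$. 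The limiting identity is thus $\int_{\O\setminus Z}\varphi\, d\Delta h = \int_\O\varphi\, d\Delta h$ for every $\varphi\in\C^\infty_c(\O)$, i.e.\ $\Delta h_{\lfloor Z}=0$.

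The main obstacle is the rigorous justification of the trace bound $|(\nabla h\cdot\nu_{W_\delta})^{\mathrm{tr}}|\leq \delta$. Although it is intuitively immediate from the continuity of $|\nabla h|$, it requires one to revisit the construction of the interior normal trace in \cite{ChenTorresZiemer2009,Silhavy2005} and check that when the divergence-measure field has a continuous modulus, the trace is pointwise controlled by this modulus on the reduced boundary. The alternative approach indicated in the introduction, which relies on $\nabla h\in BV_{\mathrm{loc}}(\O)$ and the fine structure of BV vector fields to give the stronger conclusion $D^2h_{\lfloor Z}=0$, would sidestep the trace issue but requires the additional hypothesis \eqref{2} through Proposition \ref{prop:BV}.
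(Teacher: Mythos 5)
Your overall strategy is the one the paper uses: select good level sets of $|\nabla h|$ via the coarea formula, apply the generalized Gauss--Green formula for divergence-measure fields, kill the boundary term with a trace bound of order $\delta$, and pass to the limit. The one substantive difference is that you integrate over the \emph{superlevel} sets $W_\delta=\{|\nabla h|>\delta\}$, whereas the paper integrates over the \emph{sublevel} sets $\{|\nabla h|<s_n\}$. That swap is exactly what creates the obstacle you flag at the end. The trace estimate that comes for free from \cite{ChenTorresZiemer2009,Silhavy2005} is
\[
\|(\nabla h\cdot\nu_E)^{\mathrm{tr}}\|_{L^\infty(\p_\star E)}\;\leq\;\|\nabla h\|_{L^\infty(E)},
\]
i.e.\ a bound by the supremum of the field over the \emph{whole} set $E$. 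On $E=\{|\nabla h|<s_n\}$ this supremum is $\leq s_n$ and the boundary term is at once bounded by $\|\varphi\|_\infty\, s_n\,\mathcal{H}^1(\p_\star\{|\nabla h|<s_n\})\to 0$; on $E=W_\delta$ the supremum is $\|\nabla h\|_{L^\infty(W_\delta\cap\supp\varphi)}$, which does not tend to zero, so you are forced to invoke a pointwise localization of the normal trace near the reduced boundary. That localization is plausible (normal traces are local, and $|\nabla h|=\delta$ on $\p W_\delta$ by continuity), but as you yourself note it is not contained in the statements you cite and would need a separate argument. As written, your proof therefore has a genuine unproven step.

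The repair is immediate: run your argument on $\{|\nabla h|<\delta\}$ instead of $\{|\nabla h|>\delta\}$. For a.e.\ $\delta$ these two sets have the same (finite) perimeter, your coarea and $\liminf$ selection of $\delta_n$ goes through verbatim, the standard sup-norm trace bound now gives exactly $|(\nabla h\cdot\nu)^{\mathrm{tr}}|\leq\delta_n$ on $\p_\star\{|\nabla h|<\delta_n\}$ with no localization needed, and the interior term $\int_{\{|\nabla h|<\delta_n\}}\nabla h\cdot\nabla\varphi$ is bounded by $C\delta_n\|\nabla\varphi\|_\infty\to 0$ directly. The decreasing sets $\{|\nabla h|<\delta_n\}$ shrink to $\{|\nabla h|=0\}$, so $\Delta h_{\lfloor\{|\nabla h|<\delta_n\}}\rightharpoonup \Delta h_{\lfloor\{|\nabla h|=0\}}$, and the conclusion follows. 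With this single modification your proof coincides with the paper's.
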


\begin{proof}
We follow the lines of the proof of Proposition IV.1 in \cite{SandierSerfaty2003}. We let $U_t:=\{ |\nabla h|>t \}$. Since $|\nabla h|$ is in $BV(\O)$, the coarea formula yields
\begin{equation}
\int_\O |D |\nabla h| | =\int_0^{+\infty} \mathcal{H}^1( \p U_s)ds <+\infty.
\end{equation}
See e.g.\ Theorem 1 p.185 in \cite{EvansGariepy2015}. This implies that there exists $s_n \rightarrow 0$ such that for all $n$, $U_{s_n}$ is a set of finite perimeter and $s_n\mathcal{H}^1(\p_\star U_{s_n})=s_n\mathcal{H}^1(\p_\star (U_{s_n})^c )\rightarrow 0$. Indeed by contradiction if there exists $\eta >0$ such that for all $s$ near $0$ we have $s\mathcal{H}^1(\p U_s)>\eta$, then $\mathcal{H}^1(\p U_s)>\eta/s$, and thus $\int_0^{+\infty}\mathcal{H}^1(\p U_s)$ diverges. We can take $s_n$ decreasing.
Now we note that for every Borel set $A$ we have 
\begin{equation}
\Delta h( A \cap \{ |\nabla h|=0\})=\lim_{n\rightarrow +\infty} \Delta h(A \cap \{ |\nabla h| <s_n\}).
\end{equation}
This is because the sets $\{|\nabla h|<s_n \}$ are decreasing and $|\Delta h| (\{|\nabla h |<s_n\})<+\infty$ as shown by the computations below. In particular, see Theorem 1 p.24 of \cite{EvansGariepy2015}, we have that $\nu_n:=\Delta_{\lfloor\{|\nabla h|<s_n \}}\rightharpoonup \nu:=\Delta h_{\lfloor\{|\nabla h|=0 \}}$ in the weak sense of measures. Now we take $\varphi\in \C^\infty_c(\O)$ a test function and we have, from the Gauss-Green formula on sets of finite perimeter for vector fields whose divergence is a Radon measure, cf.\ Theorem 5.2 in \cite{ChenTorresZiemer2009} and Theorem 4.4 in \cite{Silhavy2005}, 
\begin{equation}
\int_{\{|\nabla h|<s_n \}}\Delta h \varphi =-\int_{\{|\nabla h|<s_n \}} \nabla h\cdot \nabla \varphi + \int_{\p_\star \{|\nabla h|<s_n \}} \nabla h\cdot \nu d\mathcal{H}^1.
\end{equation}
In this formula $\nabla h \cdot \nu$ is a notation for the normal trace on the boundary 
$\p_\star \{|\nabla h|<s_n \}$ and must be understood in the sense of \cite{ChenTorresZiemer2009,Silhavy2005}. But these authors showed that in our particular situation, since $|\nabla h|$ is bounded we have that $\nabla h \cdot \nu$ is a $\mathcal{H}^1$-measurable function on $\p_\star \{|\nabla h|<s_n \}$ and moreover it is bounded with
\begin{equation}
\|\nabla h \cdot \nu \|_{L^\infty(\p_\star \{|\nabla h|<s_n \})} \leq  \| \nabla h \|_{L^\infty( \{|\nabla h|<s_n \})}
\end{equation}
Thus, from the choice of $s_n$  we have 
\begin{equation}
\left| \int_{\p_\star\{|\nabla h|<s_n \}} \nabla h\cdot \nu \varphi d\mathcal{H}^1 \right| \leq \|\varphi \|_{L^\infty}s_n \mathcal{H}^1(\p_\star\{|\nabla h|<s_n ) \rightarrow 0.
\end{equation}
But we also have that 
\begin{equation}
\left| \int_{\{ | \nabla h|<s_n \}} \nabla h\cdot \nabla \varphi \right|\leq C s_n \|\nabla \varphi \|_{L^\infty} \rightarrow 0.
\end{equation}
This proves that $\nu_n=\Delta_{\lfloor\{|\nabla h|<s_n \}} \rightharpoonup 0$ in the weak sense of distributions. By uniqueness of the limit we find that $\Delta h_{\lfloor \{|\nabla h|=0 \}}=0$.
\end{proof}

\begin{proposition}\label{cor:description}
Let $h\in H^1(\O)$ and $\mu\in \M(\O)$ which verify \eqref{1}, \eqref{2}. Then 
\begin{equation}\label{eq:description}
\mu=h\textbf{1}_{\{|\nabla h|=0 \}}+2\epsilon |\nabla h|\mathcal{H}^1_{\supp\mu \cap \{|\nabla h|>0 \}}
\end{equation}
for some $\epsilon:\supp\mu \cap \{|\nabla h|>0 \} \rightarrow \{ \pm 1\}$ constant on the connected components of $\supp \mu \cap \{|\nabla h|>0 \}$.
\end{proposition}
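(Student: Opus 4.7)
The plan is to split $\mu$ according to the partition $\O = \{|\nabla h|>0\}\cup \{|\nabla h|=0\}$---meaningful since $|\nabla h|$ is continuous on $\O$ by Proposition \ref{reg1}---and treat each piece separately with the tools already in place. For the open piece, Corollary \ref{cor:summarysection1}(2) immediately produces
\begin{equation*}
\mu_{\lfloor\{|\nabla h|>0\}} = 2\epsilon(x)|\nabla h|\,\mathcal{H}^1_{\lfloor \supp\mu\cap\{|\nabla h|>0\}},
\end{equation*}
with $\epsilon:\supp\mu\cap\{|\nabla h|>0\}\rightarrow\{\pm 1\}$ constant on each connected component. So the whole task reduces to identifying $\mu$ on the singular set $\{|\nabla h|=0\}$.

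On that set I would rewrite \eqref{1} as $\Delta h = h\,\mathcal{L}^2 - \mu$. Since $h\in L^\infty_{\text{loc}}(\O)$ by Proposition \ref{reg1}, the right-hand side is a locally finite Radon measure, so $\Delta h \in \M(\O)$ locally and its restriction to $\{|\nabla h|=0\}$ is well defined. The target identity $\mu_{\lfloor\{|\nabla h|=0\}}=h\,\mathbf{1}_{\{|\nabla h|=0\}}\mathcal{L}^2$ is then equivalent to
\begin{equation*}
\Delta h_{\lfloor\{|\nabla h|=0\}}=0,
\end{equation*}
which is exactly the conclusion of Proposition \ref{prop:singularset}. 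Its hypotheses are available: continuity of $|\nabla h|$ comes from Proposition \ref{reg1}, and $|\nabla h|\in BV_{\text{loc}}(\O)$ follows from Proposition \ref{prop:BV} since $\nabla h\in BV_{\text{loc}}(\O,\R^2)$ forces $|\nabla h|\in BV_{\text{loc}}(\O)$.

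The one point requiring a little care is that Proposition \ref{prop:singularset} is stated under the global hypothesis $|\nabla h|\in BV(\O)$, whereas Proposition \ref{prop:BV} only supplies $BV_{\text{loc}}$ control. However, its proof---applying the coarea formula to select levels $s_n\to 0$ with $s_n\mathcal{H}^1(\partial_\star U_{s_n})\to 0$ and then invoking the Chen--Torres--Ziemer/Silhav\'y Gauss--Green formula on the sets $\{|\nabla h|<s_n\}$---is entirely local. Replaying it on an arbitrary $U\subset\subset\O$ gives $\Delta h_{\lfloor\{|\nabla h|=0\}\cap U}=0$, and exhausting $\O$ by such $U$ yields the desired global vanishing. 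Combining this with the formula on $\{|\nabla h|>0\}$ gives \eqref{eq:description}. The main (and essentially only) obstacle is this localization of Proposition \ref{prop:singularset}; once it is recorded, the rest is assembly of results from the preceding sections.
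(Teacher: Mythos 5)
Your proposal is correct and follows essentially the same route as the paper: Corollary \ref{cor:summarysection1} handles $\mu$ on $\{|\nabla h|>0\}$, and Proposition \ref{prop:BV} combined with Proposition \ref{prop:singularset} applied on compactly contained open sets $U\subset\subset\O$ (exactly the localization you flag) yields $\Delta h_{\lfloor \{|\nabla h|=0\}}=0$ and hence the decomposition.
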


\begin{proof}
Since $h$ satisfies that $\nabla h$ is in $BV_{\text{loc}}(\O,\R^2)$ from Proposition \ref{prop:BV} we can apply Proposition \ref{prop:singularset} on every open set $U\subset \subset \O$ to obtain $\Delta h_{\lfloor \{|\nabla h|=0\cap U \}}=0$ and hence $\Delta h_{\lfloor \{|\nabla h|=0\}}=0$. We also use Corollary \ref{cor:summarysection1} of the previous section and we obtain \eqref{eq:description}.
Let us give another proof of this corollary. Since $\nabla h$ is in $BV_{\text{loc}}(\O,\R^2)$ we can apply Proposition 3.92 of \cite{AmbrosioFuscoPallara2000} to deduce that the absolute continuous part with respect to the Lebesgue measure of $D(\nabla h)$ vanishes almost everywhere on $\{|\nabla h|=0 \}$ and the Cantor part of $D(\nabla h)$ also vanishes on $\{|\nabla h|=0 \}$. But since $|\nabla h|$ is continuous in $\O$, $D(\nabla h)$ does not posses a jump part in $\{|\nabla h|=0 \}$. That is $D^2h_{\lfloor\{|\nabla h|=0 \}}=0$ and in particular $\Delta h _{\lfloor \{|\nabla h|=0 \}}=0$. Note that the proof of Proposition 3.92 in \cite{AmbrosioFuscoPallara2000} relies on the use of the coarea formula for $BV$ functions so does the proof of Proposition \ref{prop:singularset}.
\end{proof}

By using the decomposition \eqref{eq:description} we are now able to give a first proof of the fact that $h$ is constant on the connected components of $\supp \mu$. We also give two other different proofs of this result. The first one does not use that $\nabla h$ is in $BV_{\text{loc}}$ but uses only fine properties of solutions of $\Delta h\in \mathcal{M}(\O)$. The second one allows us to connect our problem with the problem studied by Caffarelli-Salazar in \cite{CaffarelliSalazar2002} and Caffarelli-Salazar-Shahgholian in \cite{CaffarelliSalazarShahgholian}.
\begin{proposition}\label{prop:const2}
Let $h$ be in $H^1(\O)$ and $\mu$ be in $\mathcal{M}(\O)$ such that \eqref{1} and \eqref{2} hold. Then $h$ is constant on the connected components of $\supp \mu$.
\end{proposition}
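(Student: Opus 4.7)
The plan is to combine three ingredients already established in the paper: the local description of $\supp\mu\cap\{|\nabla h|>0\}$ given by Theorem \ref{th:main4} and Corollary \ref{cor:summarysection1}, the global regularity $\nabla h\in BV_{\text{loc}}(\O,\R^2)$ from Proposition \ref{prop:BV}, and a Morse-Sard type theorem for Sobolev/BV functions \cite{dePascale2001,Figalli2008}. The overall strategy is to show that the image $h(K)$ of an arbitrary connected component $K$ of $\supp\mu$ is Lebesgue-negligible in $\R$, which is incompatible with $h(K)$ being a non-trivial interval.

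Let $K$ be a connected component of $\supp\mu$. Since $h$ is continuous in $\O$ (in fact locally Lipschitz, by Proposition \ref{reg1}) and $K$ is connected, $h(K)$ is a connected subset of $\R$, hence an interval. I would split $K=K_1\cup K_0$ with $K_1:=K\cap A_1$ and $K_0:=K\cap\{|\nabla h|=0\}$, where $A_1=\supp\mu\cap\{|\nabla h|>0\}$, and estimate $h(K_1)$ and $h(K_0)$ separately.

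For $h(K_1)$, I would use Corollary \ref{cor:summarysection1}: the set $A_1$ is a countable union of $\C^1$-curves and $h$ is constant on every connected component of $A_1$. In particular $A_1$ has at most countably many connected components (each one contains at least one of the curves in the decomposition), so $h(A_1)$, and a fortiori $h(K_1)$, is at most countable. For $h(K_0)$, the Morse-Sard type theorem of \cite{dePascale2001,Figalli2008}, invoked under the regularity $\nabla h\in BV_{\text{loc}}(\O,\R^2)$ provided by Proposition \ref{prop:BV}, ensures that the set of critical values $h(\{|\nabla h|=0\})$ has Lebesgue measure zero. Hence $h(K)=h(K_1)\cup h(K_0)$ is the union of a countable set and a null set, so it has Lebesgue measure zero; being a connected subset of $\R$, it must reduce to a single point, and $h$ is constant on $K$.

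The main obstacle is the justification of the Morse-Sard step in the regularity class available here: de Pascale's classical version needs $W^{2,p}_{\text{loc}}$ with $p>n=2$, whereas we only have the weaker $\nabla h\in BV_{\text{loc}}$. The hard part is therefore to appeal to the appropriate BV-extension (Figalli's refinement, or the coarea-based results of Alberti-Bianchini-Crippa mentioned in the introduction) to conclude that $h(\{|\nabla h|=0\})$ is Lebesgue-negligible. Once this analytic input is granted, the rest of the argument is a purely topological combination of Corollary \ref{cor:summarysection1} with the elementary observation that an interval of Lebesgue measure zero is a single point.
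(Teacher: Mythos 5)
Your outline is essentially the paper's own argument: the paper also combines Corollary \ref{cor:summarysection1} for $\supp\mu\cap\{|\nabla h|>0\}$ with a Sard-type statement on the critical set, and the gluing device you propose (the continuous image of a connected set is an interval, and an interval of zero $\mathcal{L}^1$-measure is a point) is precisely how the paper concludes in Proposition \ref{prop:const}. Your counting of $h(K_1)$ is also correct: every connected component of $A_1$ contains one of the countably many $\C^1$ curves, so $h(A_1)$ is at most countable.

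The one genuine gap is the step you flag yourself, and it does need to be closed rather than deferred: neither \cite{dePascale2001} nor \cite{Figalli2008} applies at the regularity $\nabla h\in BV_{\text{loc}}$, since both require $h\in W^{2,p}_{\text{loc}}$ with $p>n=2$ (Figalli's paper is a different proof of the same $W^{2,p}$, $p>n$, statement, not a $BV$ extension), so as written the claim that $h(\{|\nabla h|=0\})$ is $\mathcal{L}^1$-null is unsupported. The paper closes this in two ways and you should commit to one. The first is to work on the open set $\O':=\O\setminus\overline{\supp\mu\cap\{|\nabla h|>0\}}$, where Proposition \ref{cor:description} gives $\Delta h=h\textbf{1}_{\{|\nabla h|>0\}}$ with locally bounded right-hand side, hence $h\in W^{2,p}_{\text{loc}}(\O')$ for every $p<+\infty$ by elliptic regularity, and de Pascale--Figalli then applies on $\O'$. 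The second, which fits your "image is a null interval" scheme most cleanly, is the route you hint at via Alberti--Bianchini--Crippa and which the paper isolates as Proposition \ref{prop:const}: since $h$ is locally Lipschitz (Proposition \ref{reg1}) and $\Delta h=h-\mu$ is a Radon measure, $h$ has the Lusin property with $\C^2$ functions and therefore $\mathcal{L}^1(h(S_h))=0$ for the whole critical set $S_h$. This second route needs neither Proposition \ref{prop:BV} nor the $BV$ regularity of $\nabla h$ at all, and it avoids a further subtlety in your splitting: with the $\O'$ route one must still account for the points of $K_0$ lying on $\overline{A_1}\setminus A_1$, whereas the Lipschitz--measure-Laplacian version covers the full critical set in $\O$ at once. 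With either patch made explicit, your argument is complete.
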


\begin{proof}(First proof of Proposition \ref{prop:const2})
We recall from Corollary \ref{cor:summarysection1} that $h$ is constant on the connected components of $\supp\mu \cap \{|\nabla h| >0 \}$. By using Proposition \ref{cor:description} we have that 
\begin{equation}
\Delta h=h\textbf{1}_{\{|\nabla h|>0 \}} \text { on } \O \setminus \overline{\supp \mu \cap\{ |\nabla h|>0\} }. 
\end{equation}
Thus from elliptic regularity theory we have $h$ is in $W^{2,p}_{\text{loc}}(\O \setminus \overline{\supp \mu \cap\{ |\nabla h|>0\} } )$ for every $1\leq p<+\infty$. This allows us to use a Morse-Sard type theorem, cf.\ Theorem 1.11 in \cite{dePascale2001} or Theorem 5 in \cite{Figalli2008}, to deduce that $h$ is constant on the connected components of $\{|\nabla h|=0 \}\setminus \overline {\supp \mu \cap\{ |\nabla h|>0\} }$. By continuity of $h$ we can conclude that $h$ is constant on every connected components of $\supp \mu$.
\end{proof}

 Let us now provide another proof of the previous proposition without using the decomposition \eqref{eq:description}. As mentioned before the equation $\Delta h =\nu$ is a limit case of the Calder\'on-Zygmund theory if $\nu$ is only a Radon measure. However the Calder\'on-Zygmund techniques still allow us to give some fine properties of $h$. We recall these properties.

\begin{theorem}\label{th:fine properties}(\cite{Hajlasz1996,AlbertiBianchiniCrippa2014b,PonceRodiac})
Let $h$ be in $L^1_{\text{loc}}(\O)$ and $\nu$ be in $\mathcal{M}(\O)$ such that $\Delta h=\nu$ in $\O$ in the sense of distributions, then $h$ is in $W^{1,p}_{\text{loc}}(\O)$ for every $1\leq p<2$ and 
\begin{itemize}
\item[1)] $\nabla h$ is approximately differentiable almost everywhere in $\O$.

\item[2)] We have that $(\Delta h)_a=0$ almost everywhere on $\{\nabla h=0 \}$, where $(\Delta h)_a$ is the absolutely continuous part of $\Delta h$ with respect to the Lebesgue measure. Note that since $\nabla h$ is in $L^1(\O)$ the set $\{\nabla h=0 \}$ is well-defined up to a set of zero Lebesgue measure.

\item[3)] We also have that $\nabla h$ is $L^p$-differentiable in $x$ for almost every $x$ in $\O$ and for every $1\leq p< 2$.
\end{itemize}
\end{theorem}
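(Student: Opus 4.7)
The plan is to reduce everything to a local analysis via the Newtonian representation of $h$ and then invoke Calder\'on--Zygmund estimates for the kernels $\nabla G$ and $\nabla^2 G$ against the measure $\nu$, in the line of argument common to the cited references. Concretely, I would fix a ball $B\subset\subset\O$ and write $h=G\ast(\textbf{1}_B\nu)+h_0$ on $B$, with $G(x)=\frac{1}{2\pi}\log|x|$ and $h_0$ harmonic in $B$, hence smooth. The $W^{1,p}_{\text{loc}}$-regularity for every $1\leq p<2$ is then a classical Riesz potential bound: $\|\nabla G\ast(\textbf{1}_B\nu)\|_{L^{2,\infty}(B)}\leq C\|\nu\|_{\M(B)}$, so $\nabla h\in L^p_{\text{loc}}$ for every $p<2$.

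For item $1)$ I would perform a Lusin-type approximation of $\nabla h$ in the spirit of Haj\l{}asz. Since the maximal function $M\nu$ is finite almost everywhere, one derives from the singular integral kernel $\nabla^2 G$ the pointwise inequality
\begin{equation}
|\nabla h(x)-\nabla h(y)|\leq C|x-y|\bigl(M\nu(x)+M\nu(y)\bigr)
\end{equation}
at almost every pair of points $x,y$ (the inner piece of the convolution is absorbed by the maximal function, the outer piece by a smooth kernel estimate). Consequently, on each set $E_\lambda:=\{M\nu\leq \lambda\}\cap B$ the field $\nabla h$ is Lipschitz with constant $C\lambda$, and $|B\setminus E_\lambda|\to 0$ as $\lambda\to+\infty$. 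A function that is Lipschitz on a measurable set is classically differentiable at every density point of that set (Whitney extension), so letting $\lambda\to+\infty$ yields approximate differentiability of $\nabla h$ almost everywhere in $B$.

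For item $2)$ I would combine item $1)$ with the general principle that an approximately differentiable vector field $V$ has vanishing approximate derivative at almost every point of $\{V=0\}$; indeed, at a common approximate continuity and approximate differentiability point $x\in\{V=0\}$, the Lebesgue density theorem forces $\text{ap}DV(x)=0$. Applying this to $V=\nabla h$ gives $\text{ap}D(\nabla h)=0$ almost everywhere on $\{\nabla h=0\}$. Since the absolutely continuous part of the distributional derivative $D(\nabla h)$ coincides $\L^2$-a.e.\ with $\text{ap}D(\nabla h)$, taking traces gives $(\Delta h)_a=0$ almost everywhere on $\{\nabla h=0\}$.

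Finally, for item $3)$ I would upgrade approximate differentiability to $L^p$-differentiability by revisiting the estimates of the previous steps quantitatively, in the spirit of Alberti--Bianchini--Crippa: at a Lebesgue point $x$ of $\nabla h$ with $M\nu(x)<+\infty$, splitting the Riesz representation of $\nabla h(y)-\nabla h(x)-A(x)(y-x)$ into a smooth part (yielding a classical Taylor remainder) and a singular part (controlled by a localized maximal function that tends to $0$ with $r$) gives $\frac{1}{|B_r(x)|}\int_{B_r(x)}|R_x(y)|^p\,dy=o(r^p)$ for every $1\leq p<2$. The main obstacle is certainly item $1)$: we are exactly at the endpoint exponent of Calder\'on--Zygmund theory for the second-derivative kernel in dimension two, so the derivation of the pointwise Lipschitz inequality and the truncation of the singular integral must be arranged carefully so that only the almost everywhere finiteness of $M\nu$, and not a stronger integrability, is used.
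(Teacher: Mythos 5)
The paper does not actually prove this theorem -- it quotes item 1) from Calder\'on--Zygmund/Haj\l asz, item 3) from Alberti--Bianchini--Crippa and item 2) from Ponce--Rodiac -- so your proposal has to stand on its own, and its central step for item 1) does not. The pointwise inequality $|\nabla h(x)-\nabla h(y)|\leq C|x-y|\bigl(M\nu(x)+M\nu(y)\bigr)$ is false, and not because the truncation needs more care: in the far region $|z-x|>2|x-y|$ the kernel difference is of size $|x-y|\,|x-z|^{-2}$, and in dimension two the dyadic sum $\sum_k (2^k|x-y|)^{-2}\,|\nu|\bigl(B_{2^{k+1}|x-y|}(x)\bigr)\leq C\,M\nu(x)\sum_k 1$ produces an irremovable factor $\log(1/|x-y|)$. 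Concretely, $h(x_1,x_2)=x_1x_2\log|x|$ satisfies $\Delta h=4x_1x_2/|x|^2\in L^\infty$, so $M\nu$ is bounded, yet $\p_1h(0,t)-\p_1h(0,0)=t\log|t|$ is not $O(|t|)$; since $\nabla h$ is continuous here, the inequality must then fail on an open (hence positive-measure) set of pairs for any constant $C$. Thus the sets $E_\lambda=\{M\nu\leq\lambda\}$ are not sets on which $\nabla h$ is Lipschitz and the Whitney/Lusin route to item 1) collapses. The correct mechanism reverses your logical order: one first proves the $L^p$-Taylor expansion of item 3) at a point $x$ where the Besicovitch derivative of $\nu$ exists, by applying the weak-type bound $\|\nabla G\ast\mu\|_{L^{2,\infty}}\leq C\|\mu\|_{\M}$ to the localized measure $\mu_r:=\nu\lfloor B_r(x)-f(x)\mathcal{L}^2\lfloor B_r(x)$, whose total mass is $o(r^2)$ by the choice of $x$ (the part of $\nu$ outside $B_r(x)$ contributes a genuine Taylor remainder); approximate differentiability then follows because $L^p$-differentiability implies it, not the other way around. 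Your sketch of item 3) is in fact much closer to this correct argument than your sketch of item 1).

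There is a secondary gap in item 2): under the sole hypothesis $\Delta h\in\M(\O)$ the full distributional Hessian $D(\nabla h)$ is \emph{not} a Radon measure (only its trace is), so ``the absolutely continuous part of $D(\nabla h)$'' is undefined and there is no soft general principle identifying it with the approximate derivative. The step that is actually needed -- and which the paper attributes to Ponce--Rodiac -- is the identity $(\Delta h)_a=\tr\bigl(D^2_{ap}h\bigr)$ $\mathcal{L}^2$-a.e., where $(\Delta h)_a$ is the absolutely continuous part of the \emph{measure} $\Delta h$; this is obtained by a blow-up at Lebesgue/Besicovitch points using the $L^1$-Taylor expansion of item 3). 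Once that identity is in hand, your observation that an approximately differentiable vector field has vanishing approximate derivative at a.e.\ point of its zero set does finish item 2), and your derivation of the $W^{1,p}_{\text{loc}}$ regularity for $p<2$ from the Newtonian potential and the weak-$L^2$ bound on $\nabla G\ast\nu$ is correct.
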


For the definition of the approximate differentiability and the $L^p$-differentiability we refer to \cite{AmbrosioFuscoPallara2000,EvansGariepy2015}. We note that the $L^p$-differentiability for some $p\geq 1$ implies the approximate differentiability. Item 1) was proved by Calder\'on-Zygmund, see Remark p.129 in \cite{CalderonZygmund1952} and reproved by Hajlasz in \cite{Hajlasz1996}. Item 2) is proved in \cite{PonceRodiac} by using the previous result, by showing that $(\Delta h)_a=\tr(D^2_{ap}h )$, where $D^2_{ap}$ denotes the approximate derivative of the gradient, and by using that $D^2_{ap}h=0$ almost everywhere on $\{\nabla h=0\}$, see e.g.\ \cite{EvansGariepy2015}. Item 3) is the main result of \cite{AlbertiBianchiniCrippa2014b}. It is also shown in \cite{AlbertiBianchiniCrippa2014b}, paragraph 2.4, that it implies that $h$ has the \textit{Lusin property} with functions of class $\C^2$. Namely for all $\e>0$ there exists $f_\e$ in $\C^2$ such that $|\{h\neq f_\e \}|<\e$. Thanks to that property Alberti-Bianchini-Crippa proved that if $h$ is a Lipschitz function such that $\Delta h$ is a Radon measure then $h$ has the so-called \textit{weak-Sard property}. We refer to paragraph 2.13 of \cite{AlbertiBianchiniCrippa2014a} for the definition of this notion. In our context we have the simpler fact.

\begin{proposition}\label{prop:const}
Let $h$ be a Lipschiz function in $\O$ such that $\Delta h$ is a Radon measure. Let $S_h=\{x\in \O; \nabla h(x)=0 \text{ or } h \text{ is not differentiable in x}  \}$. Then $\mathcal{L}^1(S_h)=0$ and $h$ is constant on the connected components of $S$.
\end{proposition}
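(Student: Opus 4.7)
My plan is to prove the statement under the natural interpretation that $\mathcal{L}^1(S_h)=0$ stands for $\mathcal{L}^1(h(S_h))=0$ (image measure on $\R$); the second conclusion will then follow immediately, because for any connected component $C$ of $S_h$, the image $h(C)$ is connected in $\R$ by continuity of the Lipschitz function $h$ and contained in $h(S_h)$, and a connected subset of $\R$ with $\mathcal{L}^1$-measure zero is a single point.

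The main strategy is to combine the $\C^2$-Lusin property granted by item~3) of Theorem~\ref{th:fine properties} with the classical $\C^2$ Morse-Sard theorem. By Rademacher's theorem, $h$ is classically differentiable $\mathcal{L}^2$-a.e., so the non-differentiability part of $S_h$ is a Lebesgue-null subset of $\O$ and, up to it, $S_h=\{\nabla h=0\}$. By the Lusin property mentioned just before the proposition, for every $k\in \mathbb{N}$ there exists $f_k\in \C^2(\O)$ with $|\{h\neq f_k\}|<2^{-k}$; set $E_k:=\{h=f_k\}$. A standard density-point argument shows that at $\mathcal{L}^2$-a.e.\ $x\in E_k$ where $h$ is differentiable one has $\nabla h(x)=\nabla f_k(x)$, because two functions differentiable at $x$ and coinciding on a set having $x$ as a point of density one must have the same derivative there.

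Consequently, for $\mathcal{L}^2$-a.e.\ $x\in\{\nabla h=0\}$ we can pick $k$ so that $x$ is a density-one point of $E_k$, and then $\nabla f_k(x)=0$, $h(x)=f_k(x)\in f_k(\{\nabla f_k=0\})$. The classical $\C^2$ Morse-Sard theorem gives $\mathcal{L}^1\bigl(f_k(\{\nabla f_k=0\})\bigr)=0$ for every $k$, so
\begin{equation*}
h\bigl(\{\nabla h=0\}\bigr)\subset N_0\cup \bigcup_{k=1}^{+\infty} f_k(\{\nabla f_k=0\}),
\end{equation*}
where $N_0$ is the image of the exceptional null set of points in $\{\nabla h=0\}$ that are not density-one points of any $E_k$. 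The countable union on the right has $\mathcal{L}^1$-measure zero, and this is what yields $\mathcal{L}^1(h(S_h))=0$.

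The main technical obstacle is controlling $\mathcal{L}^1(N_0)$: a general $\mathcal{L}^2$-null set can have a Lipschitz image of positive $\mathcal{L}^1$-measure, so Lipschitz regularity of $h$ alone is not enough. This is precisely the point at which the weak-Sard property of Alberti-Bianchini-Crippa enters, controlling the absolutely continuous part of the push-forward of $\mathcal{L}^2$ by $h$ on critical values; alternatively one may invoke the $L^p$-differentiability from item~3) of Theorem~\ref{th:fine properties} to quantify the approximation on shrinking balls and absorb the exceptional null set. Once $\mathcal{L}^1(h(S_h))=0$ is established, the constancy of $h$ on each connected component of $S_h$ is immediate, as noted at the outset.
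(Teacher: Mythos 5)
Your argument is essentially the paper's own proof, which is itself a sketch of paragraph 2.15 of Alberti--Bianchini--Crippa: the $\C^2$-Lusin property, the density-point identification of $\nabla h$ with $\nabla f_k$ on $\{h=f_k\}$, the classical Morse--Sard theorem applied to each $f_k$, and the observation that a connected subset of $\R$ of $\mathcal{L}^1$-measure zero is a single point; you also correctly read the statement $\mathcal{L}^1(S_h)=0$ as $\mathcal{L}^1(h(S_h))=0$. The exceptional $\mathcal{L}^2$-null set whose image you flag as the main technical obstacle is passed over silently in the paper's sketch as well, which defers to the cited reference for that point, so your treatment is if anything slightly more careful than the paper's.
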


\begin{proof}
This is proved in paragraph 2.15 of \cite{AlbertiBianchiniCrippa2014a}. We sketch the proof for the comfort of the reader. Let $A_n\subset \O$ and $h_n$ of class $\C^2$ in $\O$ such that $h=h_n$ on $A_n$ and $|A_n|<1/n$. Let $S_n$ be the set of critical point of $h_n$. We have that $f\cap S_n$ is contained in $S_n \cap A_n$ up to a set of $\mathcal{L}^2$-Lebesgue measure zero. Now $S$ is contained in the union of $S_n\cap A_n$ up to a negligible set. Thus $h(S)\subset h\left(\bigcup_{n=1}^{+\infty}(A_n \cap S_n) \right)\subset \bigcup_{n=1}^{+\infty} h(A_n \cap S_n)=\bigcup_{n=1}^{+\infty} h_n(A_n \cap S_n)$. From the classical Morse-Sard Theorem, since $h_n$ are $\C^2$, we have $\mathcal{L}^1(h_n(S_n))=0$. This implies that $\mathcal{L}^1(h(S))=0$. Let $C$ be a connected component of $S$ then since $h$ is continue, $h(C)$ is connected and hence is an interval. But $\mathcal{L}^1(h(C))=0$  means that $h(C)$ is a single point and $h$ is constant on $C$.
\end{proof}

\begin{proof}(Second proof of Proposition \ref{prop:const2})
From Proposition \ref{reg1} we have that $h$ is locally Lipschitz. By using Corollary \ref{cor:summarysection1}, $h$ is constant on the connected components of $\supp \mu \cap \{|\nabla h|>0 \}$. Now by applying Proposition \ref{prop:const} we also have that $h$ is constant on the connected components of $\supp \cap \{| \nabla h|=0 \}$. Since $h$ is continuous we have that $h$ is constant on every connected components of $\supp \mu$.
\end{proof}

We remark that the fine properties satisfied by a function $h$ satsifying \eqref{1} and \eqref{2} also allow us to describe the absolutely continuous part of $\mu$ with respect to the Lebesgue measure without knowing a priori that $\nabla h$ is in $BV$. We mention that for the sake of completeness. 

\begin{proposition}\label{prop:mu1}
Let $h\in H^1(\O)$ and $\mu\in \mathcal{M}(\O)$ such that \eqref{1} and \eqref{2} hold. We let $\mu=\mu_1+\mu_2$  be the Radon-Nikodym decomposition of $\mu$ with respect to the Lebesgue measure, that is $\mu_1 << \mathcal{L}^2$ and $\mu_2 \perp \mathcal{L}^2$. Then 
\begin{equation}\label{eq:mu1}
\mu_1=h\textbf{1}_{\{|\nabla h|=0 \}}\mathcal{L}^2.
\end{equation}
\end{proposition}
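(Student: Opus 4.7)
The plan is to combine two facts we already have: a sharp description of $\mu$ on the regular set $\{|\nabla h|>0\}$ coming from Corollary \ref{cor:summarysection1}, and the fine information on the absolutely continuous part of $\Delta h$ coming from Theorem \ref{th:fine properties}. Writing $\Delta h = h - \mu$ as Radon measures and taking the Radon--Nikodym decomposition, we will identify $\mu_1$ both by locating its support and by computing its density.

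First I would show that $\mu_1$ is concentrated on $\{|\nabla h|=0\}$. By Corollary \ref{cor:summarysection1}, the restriction $\mu_{\lfloor\{|\nabla h|>0\}}$ equals $2\sigma(x)|\nabla h|\,\mathcal{H}^1_{\lfloor A_1}$ where $A_1=\supp\mu\cap\{|\nabla h|>0\}$ is countably $\mathcal{H}^1$-rectifiable; such a measure is supported on a countable union of $\mathcal{C}^1$ curves and therefore singular with respect to $\mathcal{L}^2$. Hence $\mu_{\lfloor\{|\nabla h|>0\}}$ contributes only to the singular part $\mu_2$, so $\mu_1(\{|\nabla h|>0\})=0$, i.e.\ $\mu_1=\textbf{1}_{\{|\nabla h|=0\}}\mu_1$.

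Next I would identify the density of $\mu_1$ on $\{|\nabla h|=0\}$. Since $h\in H^1(\O)$ is locally Lipschitz by Proposition \ref{reg1}, the measure $h\mathcal{L}^2$ is absolutely continuous. Taking the Radon--Nikodym decomposition of the identity $\Delta h = h - \mu$ yields
\begin{equation}
(\Delta h)_a \;=\; h\,\mathcal{L}^2 \;-\; \mu_1, \qquad (\Delta h)_s \;=\; -\mu_2.
\end{equation}
Now I would invoke item 2) of Theorem \ref{th:fine properties}, which guarantees that $(\Delta h)_a=0$ $\mathcal{L}^2$-almost everywhere on $\{\nabla h=0\}=\{|\nabla h|=0\}$. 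Rearranging gives that the density of $\mu_1$ with respect to $\mathcal{L}^2$ coincides $\mathcal{L}^2$-a.e.\ with $h$ on $\{|\nabla h|=0\}$, i.e.\ $\mu_1 = h\,\textbf{1}_{\{|\nabla h|=0\}}\mathcal{L}^2$, which is exactly \eqref{eq:mu1}.

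The argument has no real obstacle once Corollary \ref{cor:summarysection1} and Theorem \ref{th:fine properties} are in hand; the only subtlety worth flagging is the legitimacy of splitting $\Delta h = h-\mu$ into absolutely continuous and singular parts, which uses that both $h\mathcal{L}^2$ and $\mu$ are Radon measures in $\O$ (the former by local boundedness of $h$, the latter by assumption \eqref{1}).
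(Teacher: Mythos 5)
Your proof is correct and follows essentially the same route as the paper's: Corollary \ref{cor:summarysection1} to show that $\mu_1$ is concentrated on $\{|\nabla h|=0\}$, then Item 2) of Theorem \ref{th:fine properties} applied to $\Delta h = h-\mu$ to identify the density there. You simply spell out the intermediate steps (the singularity of $\mathcal{H}^1$ restricted to a rectifiable set with respect to $\mathcal{L}^2$, and the uniqueness of the Radon--Nikodym decomposition) that the paper leaves implicit.
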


\begin{proof}
By Corollary \ref{cor:summarysection1} we have that $\supp \mu_1 \subset \{| \nabla h|=0 \}$. But we can apply Item 2) of Theorem \ref{th:fine properties} to obtain that ${(\Delta h)_a}_{\lfloor \{| \nabla h|=0 \}}=0$. Thus we find \eqref{eq:mu1}.
\end{proof}
The proof we have given of the previous Proposition uses the analysis of Section \ref{II}. In \cite{PonceRodiac} we also give a proof of the same fact whithout using Section \ref{II}. 

We now give a third proof of Proposition \ref{prop:const2}. In order to do that we make a link with a free boundary problem studied in \cite{CaffarelliSalazar2002,CaffarelliSalazarShahgholian}. We believe that this link can be interesting to push further the study of the free boundary $\p \supp \mu$ in our problem. We first recall their definition of solutions of $\Delta h=h$ on $\{|\nabla h|=0 \}$ in an open set $\O'$.

\begin{definition}\label{def:viscositysol1}
We say that $u$ is a \textit{subsolution} of 
\begin{equation}\label{def:viscositysol}
\Delta u=u \ \  \text {on } \ \ \{|\nabla u| \neq 0 \} 
\end{equation}
if $u$ is an upper semicontinuous function, $u<+\infty$ and $\Delta P(x) \geq u(x)$ for any  parabaloid $P$ touching $u$ from above at $x$ and such that $|\nabla P(x)|\neq 0$. A \textit{supersolution} of \eqref{def:viscositysol} is a lower semi-continuous function $u$, with $u>-\infty$ such that $\Delta P(x)\leq u(x)$ for any $x$ in $\O$ and any paraboloid $P$ touching $u$ from below and such that $|\nabla P(x)| \neq 0$. A \textit{solution} is both a super and a subsolution.
\end{definition}

 We first show that $h$ which satisfies \eqref{1},\eqref{2} is a solution of $\Delta h=h\textbf{1}_{\{|\nabla h|=0 \}}$ in the open set $\O':=\O \setminus \overline{\supp \mu\cap \{|\nabla h|>0 \}}$ in the previous sense. Then we can apply the regularity theory developed by Caffarelli-Salazar in \cite{CaffarelliSalazar2002} to obtain that $h$ is in $W^{2,p}_{\text{loc}}(\O')$ for every $1\leq p<+\infty$. This allows us to conclude with the help of a Sobolev version of  Sard's Theorem, \cite{dePascale2001,Figalli2008}, as in our first proof. The details are provided below.

\begin{proof}(Third proof of Proposition \ref{prop:const2})

 We let $\O':=\O \setminus \overline{\supp \mu \cap \{|\nabla h|>0 \}}$. In $\O'$ we can say that $h$ satsifies $\Delta h=h$ on $\{|\nabla h|\neq 0 \}$ in a viscosity sense defined by Caffarelli-Salazar in \cite{CaffarelliSalazar2002}. Indeed we first observe that since $|\nabla h|$ is continuous in $\O$ from Proposition \ref{reg1} and $h$ is smooth outside $\supp \mu$  we have that $h$ is $\C^1$ in $\O'$. Now let $P$ be a paraboloid touching $h$ from above at some $x_0$ and such that $|\nabla P(x_0)|\neq 0$. We can assume that $P-h$ has a strict local minimum at $x_0$. Since $P-h$ is $\C^1$ we infer that $|\nabla h(x_0)|\neq 0$ and thus $h$ is smooth in a small neighborhood of $x_0$. Assume by contradiction that $\Delta P(x_0)<h(x_0)$, this implies that $\Delta P(x_0)<\Delta h(x_0)$ because $h$ satisfies $\Delta h=h$ in a neighborhood of every point which are not in the support of $\mu$. But $\Delta P(x_0)<\Delta h(x_0)$ contradicts the fact that $h$ is a local minimum of $P-h$ at $x_0$. Thus $h$ is a subsolution of \eqref{def:viscositysol} and in the same way we can show that it is a supersolution, hence a solution. Now in \cite{CaffarelliSalazar2002} the authors proved that $h$ is in $W^{2,p}_{\text{loc}}(\O')$ for every $1\leq p<+\infty$. By applying the Sobolev version of Morse-Sard's Theorem \cite{dePascale2001,Figalli2008} we obtain that $h$ is constant on the connected components of $\{|\nabla h|=0 \}\cap \O'$ and hence on $\supp \mu \cap \{|\nabla h|=0 \}\cap \O'$. By continuity of $h$ and by using Corollary \ref{cor:summarysection1} we can conclude that $h$ is constant on every connected components of $\supp \mu$.
\end{proof}

\begin{proof}(Proof of Theorem \ref{th:main1})
The result is a consequence of Corollary \ref{cor:summarysection1}, Proposition \ref{cor:description}, and Proposition \ref{prop:const2}.
\end{proof}

\section{Conclusion}

We have obtained a description of the functions $h$ in $H^1(\O)$ and the Radon measures $\mu
$ which satisfy \eqref{1} and \eqref{2}. When related to the Ginzburg-Landau theory, and in 
particular the results of \cite{SandierSerfaty2003} and Chapter 13 of
 \cite{SandierSerfaty2007}, our results can be interpreted by saying that vortices of solutions of the (G.L) equations in the mean-field regime can only accumulate on ``lines" or on sets of full $\mathcal{L}^2$ Lebesgue measure if we already know that the limiting vorticity belongs to $H^{-1}(\O)$. This partially answers Open problem 14 in \cite{SandierSerfaty2007}. To give a complete answer to that problem we must also look at the case where $\mu$ is not in $H^{-1}(\O)$ and where $T_h$ defined by \eqref{eq:defTh} is divergence free in finite-part in the sense of Chapter 13 of \cite{SandierSerfaty2007}. It is then expected that $\mu$ can only charge points, ``lines" or sets of full $\mathcal{L}^2$-Lebesgue measure. We recall that solutions of the (G.L) equations whose vorticities measures converge to a measure supported on lines or on circles are known to exist, cf.\  \cite{Aydi2008}. Besides it is also known that minimizers of the (G.L) energy have vorticities measures converging to a measure which is absolutely continuous with respect to the Lebesgue measure, cf.\ Chapter 9 of \cite{SandierSerfaty2007}. We also obtained that the limiting induced magnetic field is constant on the support of the limiting vorticity and this is also expected from a physical point view. We also would like to point that our Theorem \ref{th:main4} also allows us to answers some open problems mentioned in \cite{Le2009}, where the author studied the conditions \eqref{1}, \eqref{2} assuming a priori that $\mu= f \H^1_{\lfloor \Gamma}$, where $\Gamma$ is a smooth simple closed curve and $f$ is a function in $W^{2,p}(\Gamma)$ for some $p>1$ that does not vanish. Indeed 1) we can deduce that $h$ is $\C^1$ up to the boundary of each side of $\Gamma$ just from Equation \eqref{2} without assuming that the density $f$ is in $W^{2,p}(\Gamma)$. 2) If we assume that $\mu$ is supported on just one-dimensional rectifiable curve then there is an improvement of regularity, if we know that the density is nowhere vanishing. We can deduce from Theorem  \ref{th:main4} that the curve is $\C^{1,\alpha}$ for every $0<\alpha<1$.
3) The support of $\mu$ cannot have a noninteger fractional dimension.
 
 Our second main result Theorem \ref{th:main2} is a nonexistence result for non trivial solutions of \eqref{1} \eqref{2} if we assume that $h=0$ on $\p \O$ and $\O$ is starshaped. This partially answers Open problem 18 in \cite{SandierSerfaty2007} and complement a nonexistence result in \cite{Le2009}. In the context of the (G.L) theory this results states that there cannot be critical points of the (G.L) energy with a number of vortices much larger than $h_{ex}$ if $\O$ is starshaped. 
\appendix
\section{}

In this appendix we state and prove two lemmas which are probably known to the experts but that we collect here since they are of crucial importance in this paper. The first one states that it is possible to take a logarithm of a complex-valued function which does not vanish in a ball and which is not necessarily holomorphic.

\begin{lemma}\label{lem:A1}
Let $u$ be in $W^{1,q}(B_R(z_0),\mathbb{C})$ for some $q>2$. In particular $u$ is continuous. Let us assume that $u$ does not vanish in $\overline{B_R(z_0)}$. Then there exists $v=:\log u$ in $W^{1,q}(B_R(z_0),\mathbb{C})$ such that 
\begin{equation}
u=e^v \ \ \text{ in } \ B_R(z_0).
\end{equation}
\end{lemma}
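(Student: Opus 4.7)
The plan is to construct $v$ first by a topological lifting argument and then verify Sobolev regularity via a local chain rule computation.

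Since $q>2$, Morrey's embedding gives $u\in C^0(\overline{B_R(z_0)})$, and the hypothesis that $u$ has no zero on the compact set $\overline{B_R(z_0)}$ yields a constant $c>0$ with $|u(z)|\geq c$ for every $z\in \overline{B_R(z_0)}$. Because $B_R(z_0)$ is simply connected and $u$ takes values in $\mathbb{C}\setminus\{0\}$, the monodromy theorem applied to the covering map $\exp:\mathbb{C}\to\mathbb{C}\setminus\{0\}$ produces a continuous function $v:B_R(z_0)\to\mathbb{C}$, unique up to an additive constant in $2\pi i\mathbb{Z}$, such that $e^{v}=u$. This continuous $v$ is the candidate for $\log u$.

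To upgrade $v$ to a $W^{1,q}$ function, I would argue locally. Fix $p\in B_R(z_0)$. By continuity of $u$ one can find a connected open neighborhood $U_p\subset B_R(z_0)$ of $p$ and a bounded simply connected open set $V_p\subset\mathbb{C}\setminus\{0\}$ containing $u(\overline{U_p})$ and whose closure stays at a positive distance from $0$. On such a $V_p$ there is a holomorphic branch $L_p:V_p\to\mathbb{C}$ of the logarithm, which is smooth with uniformly bounded derivatives on $\overline{V_p}$. The classical chain rule in Sobolev spaces, for a smooth map with bounded derivatives composed with a $W^{1,q}$ function, then gives $L_p\circ u\in W^{1,q}(U_p,\mathbb{C})$ with
\begin{equation}
\nabla(L_p\circ u)=L_p'(u)\nabla u=\frac{\nabla u}{u}\qquad\text{in }U_p.
\end{equation}
Since $e^{L_p\circ u}=u=e^{v}$ on $U_p$, the continuous function $v-L_p\circ u$ takes values in $2\pi i\mathbb{Z}$ and is therefore constant on the connected set $U_p$. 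Hence $v\in W^{1,q}(U_p,\mathbb{C})$ with $\nabla v=\nabla u/u$ on $U_p$.

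A compactness argument then concludes. Covering $\overline{B_R(z_0)}$ by finitely many neighborhoods $U_p$, one sees that $v$ is bounded on $B_R(z_0)$ and that its distributional gradient coincides globally with $\nabla u/u$, which lies in $L^q(B_R(z_0),\mathbb{C}^2)$ thanks to the bound $|\nabla u/u|\leq c^{-1}|\nabla u|$. Hence $v\in W^{1,q}(B_R(z_0),\mathbb{C})$, as required. The only step requiring any real care is the chain rule application above, which is a standard consequence of the Marcus--Mizel-type chain rule (or equivalently of smooth approximation) given that $L_p$ is smooth with bounded derivatives on a neighborhood of the range of $u|_{U_p}$; all other steps are classical.
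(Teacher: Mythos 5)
Your proof is correct, but it follows a genuinely different route from the one in the paper. You obtain the continuous logarithm $v$ abstractly, by lifting $u:B_R(z_0)\to\mathbb{C}\setminus\{0\}$ through the covering map $\exp$ (using simple connectedness), and then establish the $W^{1,q}$ regularity purely locally, by comparing $v$ with $L_p\circ u$ for a holomorphic branch $L_p$ of the logarithm and invoking the standard chain rule for a smooth outer function with bounded derivatives; the identity $e^v=u$ is built in from the start and only $\nabla v=\nabla u/u$ needs to be checked. The paper instead writes down an explicit primitive by integrating along rays from the center, $v(z)=c_0+\int_0^1 \frac{\frac{d}{dt}[u(tz)]}{u(tz)}\,dt$, proves $\nabla v=\nabla u/u$ by approximating $u$ with smooth nonvanishing functions and differentiating under the integral, and only at the end deduces $u=e^v$ from $\nabla(e^v)=\nabla u$ and the normalization at the center. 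Your argument is conceptually cleaner (the topology and the analysis are decoupled, and the chain rule is applied to a genuinely smooth function rather than justified by hand through approximation) and it generalizes verbatim to any simply connected domain; the paper's construction is more elementary and self-contained in that it never appeals to covering-space theory, at the price of a radial formula tied to star-shaped geometry and a more delicate limiting argument. One small point to tighten: your closing step speaks of covering $\overline{B_R(z_0)}$ by finitely many of the neighborhoods $U_p$, but those were defined only around interior points, so they cannot cover the boundary. This is easily repaired: $\operatorname{Re}v=\log|u|$ is bounded by $\log c\le \operatorname{Re}v\le\log\|u\|_\infty$, and since $\nabla v=\nabla u/u\in L^q(B_R(z_0))$ with $q>2$ on a convex domain, the Morrey-type oscillation estimate gives that $v$ is uniformly continuous, hence bounded, on $B_R(z_0)$; alternatively, perform the lift directly on the contractible compact set $\overline{B_R(z_0)}$.
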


\begin{proof}
Without loss of generality we can assume that $z_0=0$. We set 
\begin{equation}
v(z):= c_0+\int_0^1 \frac{d/dt\left[u(tz) \right]}{u (tz)} dt,
\end{equation}
with $c_0 \in \mathbb{C}$ such that $e^{c_0}=u(0)$. Note that we have
\begin{equation}
v(z)=c_0+\int_0^1 \frac{x\p_xu(tz)+y\p_yu(tz)}{u(tz)}dt.
\end{equation}
We first show that $v$ is in $W^{1,q}(B_R(0),\mathbb{C})$ and that $\nabla v=\nabla u/u$ in $B_R(0)$. In order to do that we can take a sequence $(u_n)_n$ of smooth functions such that $u_n\rightarrow u $ in $W^{1,q}(B_R(0))$, and $u_n$ does not vanish in $B_R(0)$, we can also assume that there exists $a>0$  such that $|u_n|>a$, $|u|>a$ in $B_R(0)$. By using the Lebesgue dominated convergence Theorem we can see that $v_n= :\log (u_n)$ converges to $v$ in $L^q(B_R(0),\mathbb{C})$.
Since $u_n$  are smooth we can differentiate under the integral and we have that 
\begin{eqnarray}
\p_x v_n(z)&=&\int_0^1 \frac{\p_x u_n(tz))+t\p^2_{xx}u_n(tz)+ t\p^2_{xy}u_n(tz)}{u_n(tz)} \nonumber \\
& &  -\int_0^1\frac{\left[x\p_xu_n(tz)+y\p_y u_n(tz) \right]t\p_x u_n(tz)}{u_n^2(tz)} \nonumber \\
&=& \int_0^1 \frac{d}{dt}\left[ \frac{t\p_xu_n(tz)}{u_n(tz)} \right] dt \nonumber \\
&=& \frac{\p_x u_n(z)}{u_n(z)}.
\end{eqnarray}
By using the dominated convergence again, and the fact that $|u_n|>a$ we can show that $\p_x v_n$ converges in $L^q(B_R(0),\mathbb{C})$ towards $\p_x u/u$. The computations being similar for $\p_y v_n$ we obtain that $v$ is in $W^{1,q}(B_R(0),\mathbb{C})$ and $\nabla v=\nabla u/ u$ in that ball. Now we can use the chain rule in Sobolev spaces since exp is smooth and we obtain that $\nabla (e^v)= \nabla v e^v=\frac{\nabla u }{u}u=\nabla u$. But since $u(0)=e^{c_0}=e^{v(0)}$ we find that $u(z)=e^z$ in $B_R(0)$.
\end{proof}

The second lemma shows that the Leibniz rule is valid for a product of a BV function and a Sobolev function under appropriate hypotheses, this is proved in Example 3.97 of \cite{AmbrosioFuscoPallara2000}.

\begin{lemma}\label{lem:A2}
Let $u$ be in $BV\cap L^\infty(\O)$ and $v$ be in $\C^0(\overline{\O})\cap W^{1,p}(\O)$ for some $1\leq p\leq +\infty$. Then $(uv)$ is in $BV(\O)$ and 
\begin{equation}
D(uv)=vDu+u\nabla v \text{  in  } \O.
\end{equation}
\end{lemma}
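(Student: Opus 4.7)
The plan is to prove the Leibniz rule by mollifying $v$, applying the smooth Leibniz rule, and passing to the limit in the sense of distributions. First I would fix a test function $\varphi \in \C^\infty_c(\O,\R^2)$ and let $K := \supp \varphi$, which is a compact subset of $\O$. For $\e>0$ small enough (so that $K + B_\e(0) \subset \O$), set $v_\e := v * \rho_\e$, where $\rho_\e$ is a standard mollifier. Because $v$ is continuous on $\overline \O$, the mollifications converge uniformly to $v$ on $K$; because $v \in W^{1,p}(\O)$, $\nabla v_\e \to \nabla v$ in $L^p(K)$ (for $p<\infty$; the $p=\infty$ case is handled analogously, with weak-$\star$ convergence sufficing below).

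The key step is to apply the ordinary Leibniz rule with the smooth factor $v_\e$. Since $v_\e \varphi \in \C^\infty_c(\O,\R^2)$ and $u \in BV(\O)$, the definition of the distributional derivative gives
\begin{equation}
-\int_\O u\,\dive(v_\e \varphi)\,dx = \int_\O v_\e \varphi \cdot dDu.
\end{equation}
Expanding $\dive(v_\e \varphi) = v_\e \dive \varphi + \nabla v_\e \cdot \varphi$ and rearranging yields
\begin{equation}
-\int_\O u v_\e \dive \varphi\,dx = \int_\O u\,\nabla v_\e \cdot \varphi\,dx + \int_\O v_\e \varphi \cdot dDu.
\end{equation}

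Now I would pass to the limit $\e \to 0$ in each of the three integrals. On the left, $u v_\e \to uv$ in $L^\infty(K)$ (uniform convergence of $v_\e$ and boundedness of $u$), so the left side tends to $-\int_\O uv\,\dive \varphi\,dx = \langle D(uv),\varphi\rangle$. In the first integral on the right, $u\varphi \in L^{p'}(\O) \cap L^\infty$ with compact support, and $\nabla v_\e \to \nabla v$ in $L^p(K)$, so the integral converges to $\int_\O u\,\nabla v \cdot \varphi\,dx$. In the second integral, $v_\e \varphi \to v\varphi$ uniformly on $K$, and since $|Du|(\O) < \infty$ the integral converges to $\int_\O v\varphi \cdot dDu$. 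This gives
\begin{equation}
\langle D(uv),\varphi\rangle = \int_\O v\varphi \cdot dDu + \int_\O u\,\nabla v \cdot \varphi\,dx,
\end{equation}
which is exactly the identity $D(uv) = v\,Du + u\,\nabla v$ in $\mathcal D'(\O,\R^2)$.

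Finally, the right-hand side is a finite $\R^2$-valued Radon measure: its total variation is bounded by $\|v\|_{L^\infty(\O)}\,|Du|(\O) + \|u\|_{L^\infty(\O)}\,\|\nabla v\|_{L^1(\O)}$, which is finite since $v$ is continuous on the compact set $\overline \O$ and $\nabla v \in L^p(\O) \subset L^1(\O)$ for the bounded $\O$ considered. Hence $uv \in BV(\O)$ with the claimed decomposition. The main technical point to watch is the passage to the limit in the measure term $\int v_\e \varphi \cdot dDu$, which crucially uses both the continuity of $v$ up to $\overline \O$ (to get uniform convergence of $v_\e$ on $K$) and the finiteness of $|Du|$ on $\supp \varphi$; without continuity of $v$ one would only have $v_\e \to v$ in $L^p$, which is insufficient for integration against the generally singular measure $Du$.
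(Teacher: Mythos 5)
Your proof is correct, and it takes a route that differs from the paper's in a meaningful technical respect. The paper approximates \emph{both} factors: it takes smooth $u_n\to u$ with $\nabla u_n\rightharpoonup Du$ weakly as measures and, crucially, with the strict convergence $\int_\O|\nabla u_n|\to|Du|(\O)$; the latter is what controls the cross term $\int_\O(v_n-v)\nabla u_n\,\varphi$ via $\|v_n-v\|_{L^\infty}\int_\O|\nabla u_n|$. This relies on the (nontrivial) density of smooth functions in $BV$ for strict convergence. You instead leave $u$ untouched and mollify only $v$, testing the measure $Du$ directly against the smooth compactly supported field $v_\e\varphi$ and expanding $\dive(v_\e\varphi)$; the passage to the limit then needs only the uniform convergence $v_\e\to v$ on $\supp\varphi$ (from continuity of $v$) against the finite measure $Du$, plus $L^p$ convergence of $\nabla v_\e$ against $u\varphi$. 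This sidesteps the BV approximation theorem entirely and is, if anything, more elementary; both arguments use the continuity of $v$ in exactly the same essential way, namely to integrate a uniformly convergent sequence against an object ($Du$, resp.\ $\nabla u_n$) that is only controlled in the sense of measures. Your closing remarks on why $L^p$ convergence of $v_\e$ alone would not suffice, and on why the right-hand side is a finite measure (so that $uv\in BV$ and not merely $D(uv)\in\mathcal{D}'$), are exactly the right points to flag.
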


\begin{proof}
Let $(u_n)_n$, $(v_n)_n$ be sequences of smooth functions with compact support in $\O$ such that 
\begin{itemize}
\item[i)] $u_n \rightarrow u $ in $L^1(\O)$, $\nabla u_n \rightharpoonup Du$ in $\M(\O)$ and $\int_\O |\nabla u_n|\rightarrow|Du|(\O)$.
\item[ii)] $v_n \rightarrow v$ in $\C^0(\overline{\O})$  and $\nabla v_n\rightarrow \nabla v$ in $L^p(\O)$,
\item[iii)] we can also assume that $\|u_n\|_{L^\infty(\O)}, \| v_n\|_{L^\infty(\O)}\leq M$, $\|u\|_{L^\infty(\O)}, \|v\|_{L^\infty(\O)} \leq M$ for some $M>0$.
\end{itemize}
By applying the dominated convergence Theorem we can see that $u_n v_n \rightarrow u v$ in $L^1(\O)$. We also have that $\nabla (u_n v_n)=v_n \nabla u_n+ u_n\nabla v_n$ in $\O$. 
Let $\varphi\in \C^\infty_c(\O)$,
\begin{equation}\nonumber
\int_\O v_n \nabla u_n \varphi -\int_\O v \varphi Du =\int_\O (v_n-v)\nabla u_n \varphi +\int_\O v \varphi (\nabla u_n -Du). 
\end{equation}
Now we can see that 
\begin{eqnarray}
\left| \int_\O (v_n-v)\nabla u_n  \varphi \right| & \leq & \| v_n-v\|_{L^\infty(\O)}\|\varphi\|_{L^\infty(\O)}\int_\O |\nabla u_n|. \nonumber \\
\end{eqnarray}
But since $\int_\O |\nabla u_n| \rightarrow |Du|(\O)$ we have that $\left| \int_\O (v_n-v)\nabla u_n  \varphi \right|\rightarrow 0$ as $n\rightarrow +\infty$. We also have that 
\begin{equation}
\int_\O g\varphi (\nabla u_n-Du)\rightarrow 0,
\end{equation}
because $\nabla u_n\rightharpoonup Du$ in $\M(\O)$ and $g\varphi \in \C^0_c(\O)$. This proves that $\int_\O v_n\nabla u_n \varphi \rightarrow \int_\O v \varphi Du$ as $n\rightarrow +\infty$.
Now we also have that 
\begin{equation}
\int_\O |u_n\nabla v_n -u\nabla v|^p \leq C\int_\O |(u_n-u)\nabla v_n|^p+C\int_\O |\nabla v_n-\nabla v|^p |u|^p,
\end{equation} 
By using the dominated convergence theorem we find that $\int_\O |u_n\nabla v_n -u\nabla v|^p\rightarrow 0$.
Thus we can conclude that 
\begin{equation}
\nabla (u_n v_n) \rightharpoonup vDu+ u\nabla v \ \ \text{ in } \mathcal{M}(\O). 
\end{equation}
In particular the derivative in the sense of distributions of $uv$ is $vDu+ u\nabla v$ and since it is a Radon measure we have that $uv$ is in $BV(\O)$.

\end{proof}

\textbf{Acknowledgements:} I would like to express my special thanks to Etienne Sandier for useful conversations on the topic of this article. I also thank Duvan Henao, Xavier Lamy and Nam Q. Le for their comments on a first version of this paper.
\bibliographystyle{abbrv}
\bibliography{biblio}

\end{document}